\DeclareMathAlphabet{\mathbbold}{U}{bbold}{m}{n}	
\theoremstyle{plain}
\newtheorem{theorem}{Theorem}
\newtheorem*{theorem*}{Theorem}
\newtheorem{prop}[theorem]{Proposition}
\newtheorem{cor}[theorem]{Corollary}
\newtheorem{lemma}[theorem]{Lemma}
\newtheorem*{lemma*}{Lemma}
\theoremstyle{definition}
\newtheorem{definition}[theorem]{Definition}
\theoremstyle{remark}
\newtheorem{remark}[theorem]{Remark}
\newtheorem{rmk}[theorem]{Remark}
\newcommand{\mis}{\mathsf{m}}	
\newcommand{\G}{\mathbb{G}}
\newcommand{\R}{\mathbb{R}}
\newcommand{\N}{\mathbb{N}}
\newcommand{\distr}{\mathcal{D}}
\newcommand{\eps}{\varepsilon}
\newcommand{\lam}{\lambda}
\newcommand{\J}{\mathcal{J}}
\newcommand{\g}{\gamma}
\newcommand{\cutopt}{\mathrm{CutOpt}}
\newcommand{\cut}{\mathrm{Cut}}
\newcommand{\SC}{\mathrm{SC}}	
\newcommand{\Abn}{\mathrm{Abn}}	
\newcommand{\ver}{\mathcal{V}}
\newcommand{\hors}{\textsc{h}}
\newcommand{\vers}{\textsc{v}}
\DeclareMathOperator{\spn}{\mathrm{span}}
\DeclareMathOperator{\rank}{\mathrm{rank}}
\DeclareMathOperator{\supp}{\mathrm{supp}}
\DeclareMathOperator{\Hess}{\mathrm{Hess}}
\let\oldtocsection=\tocsection
\let\oldtocsubsection=\tocsubsection
\let\oldtocsubsubsection=\tocsubsubsection
\renewcommand{\tocsection}[2]{\hspace{0em}\oldtocsection{#1}{#2}}
\renewcommand{\tocsubsection}[2]{\hspace{1em}\oldtocsubsection{#1}{#2}}
\renewcommand{\tocsubsubsection}[2]{\hspace{2em}\oldtocsubsubsection{#1}{#2}}
\setlist[itemize]{leftmargin=1cm}
\author[Davide Barilari]{Davide Barilari$^\flat$}
\address{$^\flat$ Institut de Math\'ematiques de Jussieu-Paris Rive Gauche, UMR CNRS 7586, Universit\'e  Paris-Diderot, Batiment Sophie Germain, Case 7012, 75205 Paris Cedex 13, France}
\email{\href{mailto:davide.barilari@imj-prg.fr}{davide.barilari@imj-prg.fr}}
\author[Luca Rizzi]{Luca Rizzi$^\sharp$}
\address{$^\sharp$ Univ. Grenoble Alpes, CNRS, Institut Fourier, 38000 Grenoble, France}
\email{\href{mailto:luca.rizzi@univ-grenoble-alpes.fr}{luca.rizzi@univ-grenoble-alpes.fr}}
\title[Sub-Riemannian interpolation inequalities]{Sub-Riemannian interpolation inequalities}
\subjclass[2010]{53C17, 49J15, 49Q20}
\begin{document}

\begin{abstract}
We prove that ideal sub-Rie\-man\-nian manifolds (i.e., admitting no non-trivial abnormal minimizers) support interpolation inequalities for optimal transport.
A key role is played by sub-Riemannian Jacobi fields and distortion coefficients, whose properties are remarkably different with respect to the Riemannian case. As a byproduct, we characterize the cut locus as the set of points where the squared sub-Riemannian distance fails to be semiconvex, answering to a question raised by Figalli and Rifford in \cite{FR-mass}.

As an application, we deduce sharp and intrinsic Borell-Brascamp-Lieb and geodesic Brunn-Minkowski inequalities in the aforementioned setting. For the case of the Heisenberg group, we recover in an intrinsic way the results recently obtained by Balogh, Krist\'aly and Sipos in \cite{BKS-geomheis}, and we extend them to the class of generalized H-type Carnot groups. Our results do not require the distribution to have constant rank, yielding for the particular case of the Grushin plane a sharp measure contraction property and a sharp Brunn-Minkowski inequality.
\end{abstract}

\maketitle

\tableofcontents

\section{Introduction}\label{s:intro}

In the seminal paper \cite{CEMS-interpolations} it is proved that some natural inequalities holding in the Euclidean space generalize to the Riemannian setting, provided that the geometry of the ambient space is taken into account through appropriate distortion coefficients. The prototype of these inequalities in $\R^{n}$ is the Brunn-Minkowski one, or its functional counterpart in the form of Borell-Brascamp-Lieb inequality.

The main results of \cite{CEMS-interpolations}, which are purely geometrical, were originally formulated in terms of optimal transport. The theory of optimal transport (with quadratic cost) is nowadays well understood in the Riemannian setting, thanks to the works of McCann \cite{McCann-polar}, who adapted to manifolds the theory of Brenier in the Euclidean space \cite{Brenier99}. We refer to \cite{V-oldandnew} for references, including a complete historical account of the theory and its subsequent developments.

Let then $\mu_0$ and $\mu_1$ be two probability measures on an $n$-dimensional Riemannian manifold $(M,g)$. We assume $\mu_0,\mu_1$ to be compactly supported, and absolutely continuous with respect to the Riemannian measure $\mis_g$, so that $\mu_i = \rho_i \mis_g$ for some $\rho_i \in L^1(M,\mis_g)$. Under these assumptions, there exists a unique optimal transport map $T : M \to M$, such that $T_\sharp \mu_0 = \mu_1$ and which solves the Monge problem:
\begin{equation}
\int_M d^2(x,T(x)) d\mis_g(x) = \inf_{S_\sharp \mu_0 = \mu_1} \int_M d^2(x,S(x)) d\mis_g(x).
\end{equation}
Furthermore, for $\mu_0-$a.e.\ $x \in M$, there exists a unique constant-speed geodesic $T_t(x)$, with $0 \leq t \leq 1$, such that $T_0(x) = x$ and $T_1(x) = T(x)$. The map $T_t :M \to M$ defines the dynamical interpolation $\mu_t = (T_t)_\sharp \mu_0$, a curve in the space of probability measures joining $\mu_0$ with $\mu_1$. 
More precisely, $(\mu_t)_{0\leq t \leq 1}$ is the unique Wasserstein geodesic between $\mu_0$ and $\mu_1$, with respect to the quadratic transportation cost.

By a well-known regularity result, $\mu_t$ is absolutely continuous with respect to $\mis_g$, that is $\mu_t = \rho_t \mis_g$ for some $\rho_t \in L^1(M,\mis_g)$. The fundamental result of \cite{CEMS-interpolations} is that the concentration $1/\rho_t$ during the transportation process can be estimated with respect to its initial and final values. More precisely, for all $t \in [0,1]$, the following \emph{interpolation inequality} holds:
\begin{equation}\label{eq:CD-intro}
\frac{1}{\rho_t(T_t(x))^{1/n}} \geq \frac{\beta_{1-t}(T(x),x)^{1/n}}{\rho_0(x)^{1/n}} + \frac{\beta_t(x,T(x))^{1/n}}{\rho_1(T(x))^{1/n}}, \qquad \mu_0-\mathrm{a.e.}\, x \in M.
\end{equation}
Here, $\beta_t(x,y)$, for $t \in [0,1]$, are \emph{distortion coefficients} which depend only on the geometry of the underlying Riemannian manifold, and can be computed once the Riemannian structure is given,  see Definition~\ref{d:introdist}.  The distortion coefficients are in general  difficult to compute but, if $\mathrm{Ric}_g(M) \geq K g$, then $\beta_t(x,y)$ are controlled from below by their analogues on the Riemannian space forms of constant curvature equal to $K$ and dimension $n$. More precisely, we have
\begin{equation}\label{eq:dist-ref-riem-intro}
\beta_t(x,y) \geq \beta_t^{(K,n)}(x,y) = \begin{cases}
t \left(\frac{\sin(t \alpha)}{\sin( \alpha)}\right)^{n-1} & \text{if } K>0, \\ 
t^n & \text{if } K=0 , \\
t \left(\frac{\sinh(t  \alpha)}{\sinh(  \alpha)}\right)^{n-1} & \text{if } K<0,
\end{cases}
\end{equation}
where
\begin{equation}
\alpha = \sqrt{\frac{|K|}{n-1}}d(x,y).
\end{equation}

Inequality \eqref{eq:CD-intro}, when expressed in terms of the reference coefficients \eqref{eq:dist-ref-riem-intro}, is one of the incarnations of the so-called curvature-dimensions $\mathrm{CD}(K,N)$ condition, which allows to generalize the concept of Ricci curvature bounded from below and dimension bounded from above to more general metric measure spaces. This is the beginning of the synthetic approach propugnated by Lott-Villani and Sturm \cite{LV-ricci,S-ActaI,S-ActaII} and extensively developed subsequently.

The main tools used in \cite{CEMS-interpolations} are Jacobi fields and the properties of the cut locus, the nature of which changes dramatically in the sub-Riemannian setting. For this reason the extension of the above inequalities to the sub-Riemannian world has remained elusive (see also \cite{O-Finslerinterpolation} for a discussion of the Finsler case). For example, it is now well-known that the Heisenberg group equipped with a left-invariant measure, which is the simplest sub-Riemannian structure, does not satisfy any form of $\mathrm{CD}(K,N)$, as proved in \cite{Juillet}. 

On the other hand, it has been recently proved in \cite{BKS-geomheis} that the Heisenberg group actually supports interpolation inequalities as \eqref{eq:CD-intro}, with distortion coefficients whose properties are quite different with respect to the Riemannian case. The techniques in \cite{BKS-geomheis} consist in employing a one-parameter family of Riemannian extension of the Heisenberg structure, converging to the latter as $\varepsilon \to 0$. Starting from the Riemannian interpolation inequalities, a fine analysis is required to obtain a meaningful limit for $\varepsilon \to 0$. It is important to stress that the Ricci curvature of the Riemannian extensions is unbounded from below as $\varepsilon \to 0$. 

The results of \cite{BKS-geomheis} and the extension to the corank $1$ case of \cite{BKS-geomcorank1} suggest that a sub-Riemannian  theory of interpolation inequalities which parallels the Riemannian one actually exists. In this paper, we answer to the following question:
\begin{center}
\emph{Do sub-Riemannian manifolds support weighted interpolation inequalities \`a la \cite{CEMS-interpolations}? How to recover the correct weights and what are their properties?}
\end{center}
We obtain a satisfying and positive answer, at least for the so-called \emph{ideal} structures, that is admitting no non-trivial abnormal minimizing geodesics (this is a generic property, see Proposition~\ref{p:generic}). This is the most general setting in which the sub-Riemannian transportation problem is well posed (see Section~\ref{s:srtransport}).

\subsection{Interpolation inequalities}
To introduce our results, let $(\distr,g)$ be a sub-Riemannian structure on a smooth manifold $M$, and fix a smooth reference (outer) measure $\mis$. Let us introduce the (sub-)Riemannian distortion coefficients.

\begin{definition}\label{d:introdist0}
Let $A,B \subset M$ be measurable sets, and $t \in [0,1]$. The set $Z_t(A,B)$ of $t$-\emph{intermediate points} is the set of all points $\gamma(t)$, where $\gamma:[0,1] \to M$ is a minimizing geodesic such that $\gamma(0) \in A$ and $\gamma(1) \in B$.
\end{definition}
\begin{figure}[t]
\includegraphics[scale=1]{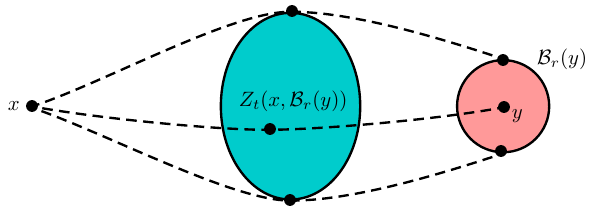}
\caption{The distortion coefficient $\beta_t(x,y)$.}
\label{f:figbm1}
\end{figure}
Let $\mathcal{B}_r(x)$ denote the sub-Riemannian ball of center $x\in M$ and radius $r>0$.
\begin{definition}[Distortion coefficient]\label{d:introdist}
Let $x, y \in M$. The \emph{distortion coefficient} from $x$ to $y$ at time $t \in [0,1]$ is
\begin{equation}\label{eq:distcoeff}
\beta_t(x,y) := \limsup_{r \downarrow 0} \frac{\mis(Z_t(x,\mathcal{B}_r(y)))}{\mis(\mathcal{B}_r(y))}.
\end{equation}
Notice that $\beta_0(x,y) = 0$ and $\beta_1(x,y) = 1$.
\end{definition}
\begin{rmk}
In the Riemannian case $\beta_t(x,y) \sim t^n$ for $t\to 0$. This universal asymptotics, valid in the Riemannian case, led \cite{CEMS-interpolations} to extract a factor $t^{n}$ in \eqref{eq:CD-intro}, expressing it in terms of the modified distortion coefficients $v_{t}(x,y):= \beta_{t}(x,y)/t^{n}$. The main difference is that here we do not extract a factor $1/t^n$, since the topological dimension does not describe the correct asymptotic behavior in the sub-Riemannian case (see Theorem~\ref{t:asymptotics-intro}). Compare also \eqref{eq:distcoeff} with \cite[Def.\ 14.17, Prop.\ 14.18]{V-oldandnew}.
\end{rmk}

Despite the lack of a canonical Levi-Civita connection and curvature, in this paper we develop a suitable theory of sub-Riemannian (or rather Hamiltonian) Jacobi fields, which is powerful enough to derive interpolation inequalities. Our techniques are based on the  approach initiated in \cite{agzel1,agzel2,lizel}, and subsequently developed in a language that is  closer to our presentation in \cite{curvature,BR-connection,BR-comparison}. Our first main result is the extension of \eqref{eq:CD-intro} to the ideal sub-Riemannian setting.
\begin{theorem}[Interpolation inequality] \label{t:jacointerpineq}
Let $(\distr,g)$ be an ideal sub-Riemannian structure on $M$, and $\mu_0,\mu_1 \in \mathcal{P}_c^{ac}(M)$. Let $\rho_s = d \mu_s / d\mis$. For all $t \in [0,1]$, it holds
\begin{equation}\label{eq:jacinterpineq-intro}
\frac{1}{\rho_t(T_t(x))^{1/n}} \geq \frac{\beta_{1-t}(T(x),x)^{1/n}}{\rho_0(x)^{1/n}} + \frac{\beta_t(x,T(x))^{1/n}}{\rho_1(T(x))^{1/n}}, \qquad \mu_0-\mathrm{a.e.}\, x \in M.
\end{equation}
If $\mu_1$ is not absolutely continuous, an analogous result holds, provided that $t \in [0,1)$, and that in \eqref{eq:jacinterpineq-intro} the second term on the right hand side is omitted.
\end{theorem}

Theorem \ref{t:jacointerpineq} is proved in Section~\ref{s:iipt4}. A key role in the proof is played by a positivity lemma (cf.\ Lemma \ref{l:keylemma1}) inspired by \cite[Ch.\ 14, Appendix: Jacobi fields forever]{V-oldandnew}, which allows to overcome the non positive definiteness of the sub-Riemannian Hamiltonian. Moreover, with respect to previous approaches, we stress that we do not make use of any canonical frame, playing the role of a parallel transported frame. 

Concerning the sub-Riemannian distortion coefficients, they can be explicitly computed in terms of sub-Riemannian Jacobi fields (cf.\ Lemma~\ref{l:distortioncomputed}). This relation is then used in Section~\ref{s:esempi} to yield explicit formulas in different examples.


Even in the most basic examples, the distortion coefficients have some peculiar features that have no analogue in the Riemannian case. These are discussed in Section~\ref{s:distortionproperties}. Here we only give the following statement, which also allows us to introduce the important concept of geodesic dimension (for a proof see Section~\ref{s:distortionproperties}).

\begin{theorem}[Asymptotics of sub-Riemannian  distortion]\label{t:asymptotics-intro}
Let $(\distr,g)$ be a sub-Riemannian structure on $M$, not necessarily ideal. Let $x \in M$ and $y \notin \cut(x)$. Then, there exists an integer $\mathcal{N}(x,y)$ and a constant $C(x,y)>0$ such that
\begin{equation}
\beta_t(x,y) \sim C(x,y) t^{\mathcal{N}(x,y)}, \qquad \text{for } t \to 0^+.
\end{equation}
Furthermore, for a.e.\ $y \notin \cut(x)$, the exponent $\mathcal{N}(x,y)$ attains its minimal value
\begin{equation}
\mathcal{N}(x):= \min\{\mathcal{N}(x,z) \mid z \notin \cut(x)\}.
\end{equation}
The number $\mathcal{N}(x)$ is called the \emph{geodesic dimension} of the sub-Riemannian structure at $x$. Finally, the following inequality holds
\begin{equation}
\mathcal{N}(x) \geq \dim(M),
\end{equation}
with equality if and only if the structure is Riemannian at $x$, that is $\distr_{x} = T_{x} M$.
\end{theorem}
We mention that there is an explicit formula for the geodesic dimension of a sub-Riemannian manifold of the form
\begin{equation}
\mathcal{N}(x)=\sum_{i=1}^{m}(2i-1)(\dim \mathcal{F}_{x}^{i}-\dim \mathcal{F}_{x}^{i-1}),
\end{equation}
where $\mathcal{F}_{x}^{1}\subset\cdots\subset \mathcal{F}_{x}^{m} = T_{x}M$ is a flag of subspaces  associated to  generic geodesics. This formula is reminiscent of Mitchell's formula \cite{Mitchell} for Hausdorff dimension for equiregular manifolds
\begin{equation}
Q=\sum_{j=1}^{r}j(\dim \distr_{x}^{j}-\dim \distr_{x}^{j-1}),
\end{equation}
where $\distr_{x}^{1}\subset\cdots\subset \distr_{x}^{r} = T_{x}M$ is the classical flag of  the distribution.
We stress that the two flags are different, in general. The geodesic dimension was initially discovered for sub-Riemannian structures in \cite[Sec.\ 5.6]{curvature}, and generalized to metric measure spaces in \cite{R-MCPcorank1}, to which we refer for more details.

\subsection{Regularity of distance}

The proof of Theorem~\ref{t:jacointerpineq} is also related with the structure of the cut locus.
In Riemannian geometry, it is well-\\
known that for almost every geodesic $\gamma$ involved in the transport, $\gamma(1) \notin \cut(\gamma(0))$. In particular, this implies (in a non-trivial way), that the cut locus, which is defined as the set of points where the squared distance is not smooth, can be characterized actually as the set of points where the squared distance fails to be semiconvex \cite{CEMS-interpolations}. 

Here, we extend the latter to the sub-Riemannian setting, answering affirmatively to the open problem raised by Figalli and Rifford in \cite[Sec.\ 5.8]{FR-mass}, at least when non-trivial abnormal geodesics are not present.

\begin{theorem}[Failure of semiconvexity at the cut locus]\label{t:FRc-intro}
Let $(\distr,g)$ be an ideal sub-Riemannian structure on $M$. Let $y \neq x$. Then $x \in \cut(y)$ if and only if the squared sub-Riemannian distance from $y$ fails to be semiconvex at $x$, that is, in local coordinates around $x$, we have
\begin{equation}
\inf_{0<|v|<1} \frac{d^2_{SR}(x+v,y) + d^2_{SR}(x-v,y) - 2d^2_{SR}(x,y)}{|v|^2} = - \infty.
\end{equation}
\end{theorem}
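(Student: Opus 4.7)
The plan is to prove the two implications of the theorem separately. For the easier direction, if $x \notin \cut(y)$ then, by the regularity theory of the distance in the ideal sub-Riemannian setting (developed in earlier sections of the paper), $d^2_{SR}(\cdot, y)$ is smooth on a neighborhood of $x$. A $C^2$ function is locally semiconvex, so the symmetric second difference quotient is bounded below by a finite constant on that neighborhood.

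For the converse, I assume $x \in \cut(y)$ and aim to show that the infimum appearing in the theorem equals $-\infty$. Following the strategy used in the Riemannian setting, I would split into two subcases according to the structure of minimizers from $y$ to $x$. The first is the \emph{split} case, where there exist two distinct minimizing geodesics $\gamma_1, \gamma_2 \colon [0,1] \to M$ from $y$ to $x$ with distinct final momenta $p_1, p_2 \in T^*_x M$. In the ideal setting both momenta belong to the super-differential of $\tfrac{1}{2} d^2_{SR}(\cdot, y)$ at $x$, which is nonempty by the (standard) local semiconcavity of $d^2_{SR}(\cdot, y)$. Summing the semiconcavity inequality applied with $p_1$ at the increment $v$ and with $p_2$ at the increment $-v$ yields, for some local constant $C>0$ and all sufficiently small $v$,
\begin{equation}
d^2_{SR}(x+v, y) + d^2_{SR}(x-v, y) - 2\, d^2_{SR}(x, y) \leq \langle p_1 - p_2, v\rangle + 2C|v|^2.
\end{equation}
Setting $v = -\eps (p_1 - p_2)/|p_1 - p_2|$ and dividing by $|v|^2 = \eps^2$, the quotient is bounded above by $-|p_1 - p_2|/\eps + 2C$, which tends to $-\infty$ as $\eps \to 0^+$.

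The remaining \emph{conjugate cut} subcase, in which the minimizer $\gamma$ from $y$ to $x$ is unique and $x$ is a conjugate point of $y$ along $\gamma$, is the main technical obstacle: the super-differential of $\tfrac{1}{2}d^2_{SR}(\cdot,y)$ at $x$ is then a singleton, so the pairing of two distinct super-gradients used above is unavailable. My primary plan here is to exploit the sub-Riemannian Jacobi field machinery developed earlier in the paper: at a conjugate time, a nontrivial Jacobi field along $\gamma$ vanishes at both endpoints, producing a tangent direction along which the normal exponential map from $y$ degenerates. Translating this degeneracy into a second-variation estimate for $d^2_{SR}(\cdot, y)$ at $x$ should exhibit a direction where the symmetric second difference quotient diverges to $-\infty$, with the rate controlled by the blow-up of the Jacobi field (consistent with Theorem~\ref{t:asymptotics-intro}, which already signals that the geometry degenerates at conjugate points). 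A cleaner alternative I would try first is to verify that in the ideal setting conjugate cut points lie in the closure of split cut points, inheriting the topological density statement from the Riemannian case; once this is in place, one transfers the failure of semiconvexity from split cuts to their limits by using the uniform local semiconcavity constant and a closure argument on super-differentials.
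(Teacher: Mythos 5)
Your easy direction and your ``split'' case are fine, but the conjugate case is the actual content of the theorem and neither of your two proposed routes closes it. The density-and-closure alternative fails for two reasons: (i) there is no sub-Riemannian analogue established here (and indeed none can exist in general --- already on the round sphere the whole cut locus of a point consists of a single conjugate point with no split cut points nearby, so density of split cut points in the cut locus is false even in the Riemannian model); and (ii) even where split points do accumulate at a conjugate cut point $x$, your estimate at a nearby split point $x_n$ is $(p_1^n - p_2^n)\cdot v + 2C|v|^2$ with $p_1^n - p_2^n \to 0$, so the divergence rate $|p_1^n - p_2^n|/\varepsilon$ vanishes as $n\to\infty$ at fixed $\varepsilon$. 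Upper semicontinuity of $x \mapsto \inf_v[\cdots]$ only gives the useless inequality $-\infty \leq \inf(\cdot)(x)$. Your first alternative --- exploiting the Jacobi-field degeneracy via a second-variation estimate --- is morally the right idea, but as written it is a restatement of the difficulty, not an argument: translating the vanishing of the exponential differential into a quantitative $-\infty$ lower bound on the second difference quotient is exactly the non-trivial content that Theorem~\ref{t:FR} (via the Jacobi-matrix machinery and the Positivity Lemma~\ref{l:keylemma1}) provides.

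For contrast, the paper proves the hard implication by contraposition and thereby avoids any case split: a finite infimum in the semiconvexity quotient, combined with the local semiconcavity of $d^2_{SR}(\cdot,y)$ away from abnormal minimizers (from \cite{CR-Semiconcavity}), sandwiches $f(x+v)-f(x)$ between two quadratics with the same linear term, which produces a function $\phi$ twice differentiable at $x$ touching $f$ from below with $f(x) = -\phi(x)$. Theorem~\ref{t:FR} (whose proof contains the positivity and conjugate-point analysis) then yields $y\notin\cut(x)$ directly. Observing this reduction --- semiconvexity-from-below plus semiconcavity-from-above gives a $C^{1,1}$-type touching function, which is precisely the hypothesis of the Jacobian estimate --- is the missing step that lets one handle the conjugate case without needing any structure theorem for the cut locus.
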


The characterization of Theorem~\ref{t:FRc-intro} is false in the non-ideal case, as we discuss in Section~\ref{s:regopt}. Some related open problems are proposed in Section~\ref{s:openquestions}.

\subsection{Geometric inequalities}

The classical consequences of interpolation inequalities  follow from standard arguments. In Section~\ref{s:inequalities} we discuss the the $p$-mean and the Borell-Brascamp-Lieb inequalities (Theorems \ref{t:srpmean} and \ref{t:srbbl}, respectively). In this introduction we focus on their geometric counterpart: the Brunn-Minkowski inequality.
Its classical version asserts that  for measurable sets $A,B\subset \R^{n}$   one has
\begin{equation}
|(1-  t)A + tB|^{1/n} \geq 
 (1 -  t)|A|^{1/n}
+ t |B|^{1/n},\qquad  0 \leq  t \leq 1, 
\end{equation}
 where $|\cdot|$ denotes the Lebesgue measure.
 The set $Z_{t}(A, B) = (1- t)A + tB$ 
consists of the locus of points $\gamma(t)$ as $\gamma$ varies over all line segments $(1-t)x+ty$, for  $0 \leq t \leq 1$ joining points $x \in A$ to points $y\in  B$. 
We refer to \cite{Gardner} for a comprehensive review in the Euclidean context.

%


To introduce the geodesic Brunn-Minkowski inequality, we define for any pair of Borel subsets $A,B\subset M$ the  following quantity:
\begin{equation} \label{eq:betatAB}
\beta_{t}(A,B):=\inf\left\{ \beta_{t}(x,y)  \mid (x,y)\in (A\times B)\setminus \cut(M)\right\},
\end{equation}
with the convention that $\inf \emptyset =0$. Notice that $0\leq \beta_{t}(A,B)<+\infty$, cf.\ Lemma \ref{l:distortioncomputed}.

\begin{theorem}[Sub-Riemannian Brunn-Minkowski inequality] \label{t:bmsr-intro}
Let $(\distr,g)$ be an ideal sub-Rieman\-nian structure on a $n$-dimensional manifold $M$, equipped with a smooth measure $\mis$. Let $A,B\subset M$ be Borel subsets.  Then we have
\begin{equation}\label{eq:bmsr-intro}
\mis(Z_{t}(A,B))^{1/n}\geq \beta_{1-t}(B,A)^{1/n} \mis(A)^{1/n}+ \beta_{t}(A,B)^{1/n} \mis(B)^{1/n}.
\end{equation}
\end{theorem}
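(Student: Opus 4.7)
The plan is to apply the interpolation inequality \eqref{eq:jacinterpineq-intro} to the normalized restrictions of $\mis$ to $A$ and $B$, deduce a pointwise upper bound on the interpolated density, and then integrate it.

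One may assume $0 < \mis(A), \mis(B) < +\infty$: if either $\mis(A) = 0$ or $\mis(B)=0$ the right-hand side of \eqref{eq:bmsr-intro} is zero (by the convention $\inf\emptyset = 0$), and the case of infinite measure follows by exhaustion with bounded subsets of positive measure, using the monotonicity of $Z_t$ and monotone convergence. Set
\begin{equation}
\mu_0 := \frac{\mathbf{1}_A}{\mis(A)}\,\mis, \qquad \mu_1 := \frac{\mathbf{1}_B}{\mis(B)}\,\mis,
\end{equation}
so that $\mu_0,\mu_1 \in \mathcal{P}_c^{ac}(M)$. Let $T$ be the associated optimal transport map and $T_t$ its constant-speed interpolation, with $\mu_t := (T_t)_\sharp \mu_0$ and density $\rho_t := d\mu_t/d\mis$. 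By the interpolation inequality \eqref{eq:jacinterpineq-intro}, for $\mu_0$-a.e.\ $x\in A$ one has $T(x)\in B$ and
\begin{equation}
\rho_t(T_t(x))^{-1/n} \geq \beta_{1-t}(T(x),x)^{1/n}\,\mis(A)^{1/n} + \beta_t(x,T(x))^{1/n}\,\mis(B)^{1/n}.
\end{equation}
By the sub-Riemannian optimal transport theory in the ideal setting (Section~\ref{s:srtransport}), $(x,T(x)) \notin \cut(M)$ for $\mu_0$-a.e.\ $x$. The definition of $\beta_t(A,B)$ then yields the pointwise density bound
\begin{equation}
\rho_t(T_t(x)) \leq C^{-n}, \qquad C := \beta_{1-t}(B,A)^{1/n}\mis(A)^{1/n} + \beta_t(A,B)^{1/n}\mis(B)^{1/n}.
\end{equation}

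Since $T_t(x) \in Z_t(x,T(x)) \subseteq Z_t(A,B)$ for $\mu_0$-a.e.\ $x$, the probability measure $\mu_t$ is concentrated on $Z_t(A,B)$. Integrating the pointwise bound gives
\begin{equation}
1 = \int_M \rho_t\,d\mis = \int_{Z_t(A,B)} \rho_t\,d\mis \leq C^{-n}\,\mis(Z_t(A,B)),
\end{equation}
hence $\mis(Z_t(A,B))^{1/n} \geq C$, which is exactly \eqref{eq:bmsr-intro}.

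The main delicate point is the $\mu_0$-a.e.\ avoidance of the cut locus used in the second step: this is precisely where the ideal hypothesis is indispensable, and it rests on the sub-Riemannian Brenier--McCann theory together with the cut-locus characterization of Theorem~\ref{t:FRc-intro}. Once this is in hand, the remaining reasoning is a purely measure-theoretic integration along the lines of the Riemannian argument in \cite{CEMS-interpolations}, with the essential difference that the factor $t^n$ is \emph{not} extracted from the distortion coefficients (cf.\ Remark~\ref{r:noestraggo}).
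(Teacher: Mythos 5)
Your overall strategy is sound and attractively direct: rather than routing through the Borell--Brascamp--Lieb and $p$-mean machinery as the paper does (Theorem~\ref{t:srbbl} $\Rightarrow$ Theorem~\ref{t:srpmean} with $p=+\infty$ $\Rightarrow$ Theorem~\ref{t:bmsr}), you apply the interpolation inequality to the normalized restrictions and integrate. This is a legitimate shortcut. However, the step on which everything hinges contains a genuine gap.

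You assert that ``$(x,T(x))\notin\cut(M)$ for $\mu_0$-a.e.\ $x$.'' This is false for genuinely sub-Riemannian structures. The optimal map $T$ decomposes the support of $\mu_0$ into a moving set $\mathcal{M}^\psi$ and a static set $\mathcal{S}^\psi$ (Theorem~\ref{t:well-posed}). On $\mathcal{S}^\psi$ one has $T(x)=x$, and the diagonal \emph{lies in} the cut locus: $\mathsf{d}_x^2$ fails to be semiconcave at $x$ (Remark~\ref{r:semisemi}), so $(x,x)\in\cut(M)$. The static set can certainly carry positive $\mu_0$-mass --- for instance $A=B$ forces $T=\mathrm{id}$, $\mathcal{S}^\psi=A$. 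For such $x$ the definition of $\beta_t(A,B)$ gives no lower bound on $\beta_t(x,T(x))=\beta_t(x,x)$, because the pair $(x,x)$ is excluded from the infimum. Thus the pointwise estimate $\rho_t(T_t(x))\leq C^{-n}$ is not established on $\mathcal{S}^\psi$, and the final integration $1=\int_{Z_t(A,B)}\rho_t\,d\mis\leq C^{-n}\mis(Z_t(A,B))$ is not justified. To close this you must treat $\mathcal{S}^\psi$ separately, exactly as the paper does in the proof of Theorem~\ref{t:jacointerpineq}: there $T_t(x)=x$, $\rho_t(x)=\rho_0(x)$, and the bound rests on the on-diagonal estimate of Lemma~\ref{l:diagonalcoefficient} (namely $\beta_{1-t}(x,x)^{1/n}+\beta_t(x,x)^{1/n}\leq 1$), together with the observation that if $\mathcal{S}^\psi$ is $\mu_0$-non-negligible then $\mis(A)=\mis(B)$; your proof never invokes any of this.

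Two smaller points. First, you do not address measurability of $Z_t(A,B)$, which need not be Borel; the paper fixes this by replacing $Z_t(A,B)$ with a Borel hull of equal outer measure using Borel regularity of $\mis$, and your integration step silently assumes measurability. Second, in reducing to $0<\mis(A),\mis(B)<\infty$ you should make explicit that the exhaustion is by \emph{compact} subsets (not merely bounded, and not merely to handle infinite measure): $\mu_0,\mu_1\in\mathcal{P}^{ac}_c(M)$ requires compact support, which a finite-measure Borel set does not provide.

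Once these are repaired, your argument is correct and is, in my view, a cleaner route than the paper's detour through the $p$-mean inequality. The trade-off is that the paper's route produces the Borell--Brascamp--Lieb and $p$-mean inequalities as intermediate theorems of independent interest, whereas yours proves Brunn--Minkowski directly without establishing those.
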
\begin{figure}[!ht]
\includegraphics[scale=1]{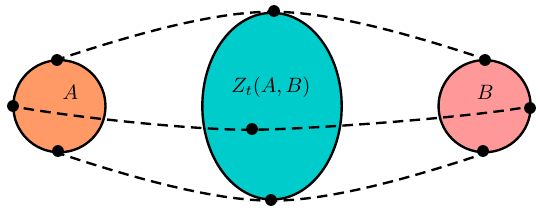}
\caption{The set $Z_{t}(A,B)$.}
\label{f:figbm2}
\end{figure}

\begin{remark}
A different generalization of the Euclidean Brunn-Minkowski inequality, at least for left-invariant structures on Lie groups, is the \emph{multiplicative} Brunn-Minkowski inequality. The latter is defined by replacing the Minkowski sum $A+B$ with the group multiplication $A \star B$. For the Heisenberg group $\mathbb{H}_3$, with group law $\star$ and left-invariant measure $\mis$, the multiplicative Brunn-Minkowski inequality reads
\begin{equation}
\mis(A \star B)^{1/d} \geq \mis(A)^{1/d} + \mis(B)^{1/d}, \qquad A,B \subset \mathbb{H}^3.
\end{equation}
The above inequality is true for the topological dimension $d=3$ \cite{LM-isopH}, but false for the Hausdorff dimension $d=4$ \cite{M-BMisoH}.
\end{remark}

A particular role in Theorem~\ref{t:bmsr-intro} is played by structures where $\beta_t(x,y) \geq t^N$ for some $N \in \N$, for all $t \in [0,1]$ and $(x,y) \notin \cut(M)$. By Theorem \ref{t:bmsr-intro}, this implies the so-called \emph{measure contraction property} $\mathrm{MCP}(0,N)$, first introduced in \cite{Ohta} (see also \cite{S-ActaII} for a similar formulation). The $\mathrm{MCP}$ was first investigated in Carnot groups in \cite{Juillet,Riff-Carnot}. In our setting, we prove the following equivalence result.

\begin{theorem}[Equivalence of inequalities]\label{t:equivalenza}
Let $(\distr,g)$ be an ideal sub-Rieman\-nian structure on a $n$-dimensional manifold $M$, equipped with a smooth measure $\mis$. Let $N\geq 1$.
Then, the following properties are equivalent:
\begin{itemize} 
\item[(i)] $\beta_t(x,y) \geq t^N$, for all $(x,y) \notin \cut(M)$ and  $t \in [0,1]$;
\item[(ii)] the Brunn-Minkowski inequality holds: for all non-empty Borel sets $A,B$
\begin{equation}\label{eq:bmsr-intro0}
\mis(Z_{t}(A,B))^{1/n}\geq (1-t)^{N/n} \mis(A)^{1/n}+ t^{N/n} \mis(B)^{1/n}, \qquad \forall\, t \in [0,1];
\end{equation}
\item[(iii)] the measure contraction property $\mathrm{MCP}(0,N)$ is satisfied: for all non-empty Borel sets $B$ and $x\in M$
\begin{equation}\label{eq:bmsr-intro00}
\mis(Z_{t}(x,B))\geq  t^{N} \mis(B), \qquad \forall\, t \in [0,1].
\end{equation}
\end{itemize}
\end{theorem}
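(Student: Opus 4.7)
The plan is to prove the equivalences in a cycle (i) $\Rightarrow$ (ii) $\Rightarrow$ (iii) $\Rightarrow$ (i), each step being essentially a direct application of either the already-established geodesic Brunn-Minkowski inequality (Theorem~\ref{t:bmsr-intro}) or the definitions themselves.

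For (i) $\Rightarrow$ (ii), I would simply plug the pointwise bound into the infimum defining $\beta_t(A,B)$. Since $\beta_t(x,y) \geq t^N$ on the complement of $\cut(M)$, we have $\beta_t(A,B) \geq t^N$ and, symmetrically, $\beta_{1-t}(B,A) \geq (1-t)^N$. Inserting these bounds into \eqref{eq:bmsr-intro} from Theorem~\ref{t:bmsr-intro} yields \eqref{eq:bmsr-intro0}. (The case in which $(A\times B)\setminus \cut(M)$ is empty must be handled separately, but in that case $\beta_t(A,B)=\beta_{1-t}(B,A)=0$ by convention while $\mis(Z_t(A,B))$ is controlled trivially; I would include a brief remark to cover this edge case.)

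For (ii) $\Rightarrow$ (iii), I would choose $A=\{x\}$, which is a non-empty Borel set with $\mis(A)=0$, and note that $Z_t(\{x\},B)=Z_t(x,B)$. Then \eqref{eq:bmsr-intro0} collapses to
\begin{equation}
\mis(Z_t(x,B))^{1/n}\geq t^{N/n}\mis(B)^{1/n},
\end{equation}
which upon raising to the $n$-th power is exactly \eqref{eq:bmsr-intro00}.

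For (iii) $\Rightarrow$ (i), I would apply the measure contraction property \eqref{eq:bmsr-intro00} to $B=\mathcal{B}_r(y)$, obtaining $\mis(Z_t(x,\mathcal{B}_r(y)))\geq t^N\mis(\mathcal{B}_r(y))$ for every $r>0$. Dividing by $\mis(\mathcal{B}_r(y))$ and taking $\limsup_{r\downarrow 0}$ yields $\beta_t(x,y)\geq t^N$ directly from the definition of the distortion coefficient; note that this step is agnostic to whether $(x,y)\in \cut(M)$, so in particular it holds on the complement.

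The only delicate point I expect is the boundary/degenerate case in (i) $\Rightarrow$ (ii) where $(A\times B)$ is contained in $\cut(M)$, making $\beta_t(A,B)=0$; here one must verify that \eqref{eq:bmsr-intro0} still holds trivially, either because $Z_t(A,B)$ is large enough, or by appealing to the fact that $\cut(M)$ has measure zero (an ideal-structure consequence established earlier in the paper) and an approximation argument. Beyond this, no nontrivial analytic input is needed: the result is a clean repackaging of Theorem~\ref{t:bmsr-intro} together with the definitions of $\beta_t$ and of $\mathrm{MCP}(0,N)$.
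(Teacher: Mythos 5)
Your proof follows exactly the same cycle (i) $\Rightarrow$ (ii) $\Rightarrow$ (iii) $\Rightarrow$ (i) and uses the same ingredients as the paper: plug the pointwise lower bound on $\beta_t$ into Theorem~\ref{t:bmsr} for (i) $\Rightarrow$ (ii), take $A=\{x\}$ for (ii) $\Rightarrow$ (iii), and take $B=\mathcal{B}_r(y)$ together with Definition~\ref{d:distcoeff} for (iii) $\Rightarrow$ (i). The edge case you flag in (i) $\Rightarrow$ (ii) does not in fact arise: for ideal structures $\cut(x)$ is $\mis$-negligible, so whenever the right-hand side of \eqref{eq:bmsr-intro0} is nonzero (which forces $\mis(A)>0$ or $\mis(B)>0$) the set $(A\times B)\setminus\cut(M)$ is automatically non-empty and the $\inf\emptyset=0$ convention never kicks in.
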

We stress that on a $n$-dimensional sub-Riemannian manifold that is not Riemannian, the $\mathrm{MCP}(0,n)$ is never satisfied (see \cite[Thm.\ 6]{R-MCPcorank1}). 

This clarifies the fact that an Euclidean Brunn-Minkowski inequality with linear weights (that is \eqref{eq:bmsr-intro0} with  $N=n$), is not adapted for generalizations to genuine sub-Riemannian situations, as well as the classical curvature-dimension condition. We mention that generalized curvature-dimension type inequalities suitable for particular classes of sub-Riemannian structures have been developed in \cite{BG-CD,BKW-transverse}.

\subsection{Old and new examples}
In Section~\ref{s:esempi}, we discuss some examples, where the distortion coefficients are explicit. In particular, we consider:
\begin{itemize}
\item \textbf{The Heisenberg group $\mathbb{H}_3$}. In this case we recover, in an intrinsic way, the results of \cite{BKS-geomheis}, with the same distortion coefficients. See Section~\ref{s:heis}.
\item \textbf{Generalized H-type groups}. This is a class of Carnot groups of arbitrary large corank, introduced in \cite{BR-MCPHtype}, and which extends the class of Kaplan H-type groups. In the ideal case we obtain sharp interpolations inequalities for general measures (Corollary~\ref{c:interpohtype}). In the general and possibly non-ideal case, we prove sharp Brunn-Minkowski inequalities (Corollary~\ref{c:BMhtype}) and measure contraction properties. 
See Section~\ref{s:genheis}.
\item \textbf{Grushin plane $\mathbb{G}_2$}. Our techniques work also for sub-Riemannian distributions $\distr$ whose rank is not constant. In this setting we are able to obtain for the first time interpolation inequalities (Corollary~\ref{c:grushin-interpolation}), sharp Brunn-Minkowski inequalities (Corollary~\ref{c:grushin-BM}), and sharp measure-contraction properties (Corollary~\ref{c:grushin-MCP}). See Section~\ref{s:grush}.
\item \textbf{Sasakian structures}. Sasakian manifolds are a particular class of contact sub-Riemannian structures. When endowed with their canonical volume, Sasakian manifolds satisfy a measure contraction property under suitable curvature lower bounds. Combining these results with Theorem~\ref{t:equivalenza}, we get a sharp Brunn-Minkowski inequality (Corollary~\ref{c:sasaki}). See Section~\ref{s:sasaki}.

\end{itemize}

In all the above cases, we are able to prove that the distortion coefficients satisfy
\begin{equation} \label{eq:ineqgeneral-intro}
\beta_t(x,y) \geq t^\mathcal{N}, \qquad \forall (x,y) \notin \cut(M), \, \forall t \in [0,1],
\end{equation}
for some minimal $\mathcal{N}$, given by the geodesic dimension of the sub-Rie\-mannian structure (cf.\ Theorem~\ref{t:asymptotics-intro}). 
The interpolation inequalities take hence a very pleasant sharp form. For example in the case of the Brunn-Minkowski inequality, for all non-empty Borel sets $A,B$, we have 
\begin{equation}\label{eq:ineqgeneral-intro3}
\mis(Z_t(A,B))^{1/n} \geq (1-t)^{\mathcal{N}/n} \mis(A)^{1/n} + t^{\mathcal{N}/n} \mis(B)^{1/n}, \qquad \forall\, t \in [0,1],
\end{equation}
In all these cases, \eqref{eq:ineqgeneral-intro3} is sharp, in the sense that if one replaces the exponent $\mathcal{N}$ with a smaller one, the inequality fails for some choice of $A,B$.
%

\subsection{Carnot groups}
Another class of examples is given by ideal Carnot groups. In \cite{Riff-Carnot} it was proved that for any ideal Carnot group there exists $N \geq \mathcal{N}$ such that the $\mathrm{MCP}(0,N)$ property is satisfied (see \cite{Rif-mediumfat} for the generalization to the medium-fat case). Hence we have the following result.
\begin{cor}
For any ideal Carnot group $G$ there exists $N  \geq \mathcal{N}$ such that $G$, equipped with its left-invariant sub-Riemannian structure and the Haar measure, satisfies the inequalities of Theorem~\ref{t:equivalenza}.
\end{cor}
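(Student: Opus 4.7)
The plan is to combine the pre-existing measure contraction result of Rifford \cite{Riff-Carnot} with the equivalences provided by Theorem~\ref{t:equivalenza} and the asymptotic analysis of Theorem~\ref{t:asymptotics-intro}. First I would invoke \cite{Riff-Carnot}, which establishes that any ideal Carnot group $G$, equipped with its left-invariant sub-Riemannian structure and the Haar measure, satisfies the measure contraction property $\mathrm{MCP}(0,N)$ for some finite $N\geq 1$. By the implication $(\mathrm{iii})\Rightarrow(\mathrm{i}),(\mathrm{ii})$ of Theorem~\ref{t:equivalenza} (which applies since $G$ is ideal), this yields both the geodesic Brunn--Minkowski inequality and the uniform distortion bound $\beta_{t}(x,y)\geq t^{N}$ for every $(x,y)\notin\cut(G)$ and every $t\in[0,1]$. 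This already establishes existence of an exponent $N$ realizing all three equivalent conditions.

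Next I would verify that such an exponent is automatically bounded below by the geodesic dimension. Since the sub-Riemannian structure is left-invariant and $G$ admits intrinsic dilations, the geodesic dimension $\mathcal{N}(x)$ is constant in $x$, equal to a single number $\mathcal{N}$. Fix any $x\in G$ and pick some $y\notin \cut(x)$ for which Theorem~\ref{t:asymptotics-intro} yields the sharp asymptotics $\beta_{t}(x,y)\sim C(x,y)\,t^{\mathcal{N}}$ as $t\to 0^{+}$, with $0<C(x,y)<+\infty$. Combining this with the uniform bound $\beta_{t}(x,y)\geq t^{N}$ gives, for $t$ small,
\begin{equation}
C(x,y)\,t^{\mathcal{N}}\,(1+o(1))\;\geq\; t^{N},
\end{equation}
i.e.\ $C(x,y)(1+o(1))\geq t^{N-\mathcal{N}}$. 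If $N<\mathcal{N}$ the right-hand side diverges as $t\to 0^{+}$, contradicting finiteness of $C(x,y)$. Hence $N\geq\mathcal{N}$, as claimed.

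The only technical point to watch is the compatibility between the formulation of $\mathrm{MCP}(0,N)$ in \cite{Riff-Carnot} and the intrinsic distortion coefficients used throughout the present paper, but this is precisely the content of Theorem~\ref{t:equivalenza}(iii), and the homogeneity of Carnot groups guarantees that the pointwise geodesic dimension $\mathcal{N}(x)$ does not depend on the base point. Once these two ingredients are in place, the conclusion reduces to the elementary comparison of powers of $t$ at the origin carried out above.
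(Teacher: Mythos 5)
Your proof is correct and follows essentially the same route the paper intends: invoke Rifford's $\mathrm{MCP}(0,N)$ result for ideal Carnot groups and then pass through the equivalences of Theorem~\ref{t:equivalenza}. The paper states the conclusion as an immediate consequence of these two facts, attributing (in its phrasing) the bound $N \geq \mathcal{N}$ to \cite{Riff-Carnot}; you have instead re-derived that bound as an automatic consequence of the small-time asymptotics $\beta_t(x,y) \sim C(x,y)\, t^{\mathcal{N}}$ from Theorem~\ref{t:asymptotics-intro}, together with the uniform lower bound $\beta_t \geq t^N$ coming from (i) of Theorem~\ref{t:equivalenza}. This is a worthwhile extra step: the geodesic dimension as such postdates \cite{Riff-Carnot}, so the comparison-of-exponents argument you give (and the remark that homogeneity makes $\mathcal{N}(x)$ base-point independent on a Carnot group) is exactly what is needed to make the assertion $N \geq \mathcal{N}$ self-contained, and it closes a gap the paper glosses over by deferring to the cited reference.
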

\noindent
\textbf{Open questions.} Let $G$ be a Carnot group equipped with a left-invariant sub-Riemannian structure and the Haar measure.  
\begin{itemize}
\item[(i)] Is it true that the Brunn-Minkowski type inequality \eqref{eq:bmsr-intro0} holds for some $N\in \N$? 
\item[(ii)] Is the optimal $N$ such that \eqref{eq:bmsr-intro0} holds equal to the geodesic dimension?
\end{itemize}
In question (i), if such $N$ exists, then it is greater or equal than the geodesic dimension $\mathcal{N}$ of the Carnot group, as a consequence of the asymptotics of Theorem~\ref{t:asymptotics-intro} as $t\to 0$. Indeed, a proof of the Brunn-Minkowski inequality would require a control on the distortion coefficients for all $t\in [0,1]$. 

\subsection{Afterwords}

In this work we focused in laying the groundwork for interpolation inequalities in sub-Riemannian geometry. It remains to understand which is the correct class of models whose distortion coefficients are the reference ones, playing the role of Riemannian space forms in Riemannian geometry. This will be the object of a subsequent work. We anticipate here that the natural reference spaces \emph{do not belong} to the category of sub-Riemannian structures. The unifying framework that we propose is the one of optimal control problems. This class of variational problems is  large enough to include infinitesimal models for all of the three great classes of geometries: Riemannian, sub-Riemannian, Finslerian, providing the first step of the ``great unification'' auspicated in \cite[Sec.\ 9]{Villani-Bourbaki}. In the spirit of \cite{BR-comparison}, linear-quadratic optimal control problems play the role of constant curvature spaces.

Another challenging problem is to understand how to include abnormal minimizers in this picture. Abnormal geodesics, as \cite{BKS-geomcorank1} suggests for the case of corank $1$ Carnot groups, are not \emph{a priori} an obstacle to interpolation inequalities. These remarkable results are the consequence of the special structure of corank $1$ Carnot groups, which are the metric product of an (ideal) contact Carnot group and a suitable copy of a flat $\R^n$. In general, an organic theory of transport and Jacobi fields along abnormal geodesics is still lacking. In this paper, we discuss some aspects of the non-ideal case and some open problems in Section~\ref{s:regopt}.			
\section{Preliminaries}\label{s:prel}

We start by recalling some basic facts in sub-Riemannian geometry. For a comprehensive introduction, we refer to \cite{nostrolibro,riffordbook,montgomerybook}.

\subsection{Sub-Riemannian geometry}
A sub-Rieman\-nian structure on a smooth, connected $n$-dimensional manifold $M$, where $n\geq 3$, is defined by a set of $m$ global smooth vector fields $X_{1},\ldots,X_{m}$, called a \emph{generating frame}. The \emph{distribution} is the family of subspaces of the tangent spaces spanned by the vector fields at each point
\begin{equation}
\distr_{x}=\mathrm{span}\{X_{1}(x),\ldots,X_{m}(x)\}\subseteq T_{x}M,\qquad \forall\, x\in M.
\end{equation}
The generating frame induces an inner product $g_{x}$ on $\distr_{x}$ as follows: given $v,w\in \distr_x$ the inner product $g_{x}(v,w)$ is defined by the polarization formula, letting
\begin{equation}
g_{x}(v,v):=\inf\left\{\sum_{i=1}^{m}u_{i}^{2}\mid \sum_{i=1}^{m}u_{i}X_{i}(x)=v\right\}.
\end{equation}
We assume that the distribution is \emph{bracket-generating}, i.e., the tangent space $T_{x}M$ is spanned by the vector fields $X_{1},\ldots,X_{m}$ and their iterated Lie brackets evaluated at $x$. A \emph{horizontal curve} $\gamma : [0,1] \to M$ is an absolutely continuous path such that there exists $u\in L^{2}([0,1],
\R^{m})$ satisfying
\begin{equation}
\dot\gamma(t) =  \sum_{i=1}^m u_i(t) X_i(\gamma(t)), \qquad \mathrm{a.e.}\, t \in [0,1].
\end{equation}
This implies that $\dot\gamma(t) \in \distr_{\gamma(t)}$ for almost every $t$. If $\gamma$ is horizontal, the map $t\mapsto \sqrt{g(\dot\gamma(t),\dot\gamma(t))}$ is measurable on $[0,1]$, hence integrable \cite[Lemma 3.11]{ABBEMS}. We define the \emph{length} of an horizontal curve as follows
\begin{equation}
\ell(\gamma) = \int_0^1 \sqrt{g(\dot\gamma(t),\dot\gamma(t))}dt.
\end{equation}
The \emph{sub-Rieman\-nian distance} is defined by:
\begin{equation}\label{eq:infimo}
d_{SR}(x,y) = \inf\{\ell(\gamma)\mid \gamma(0) = x,\, \gamma(1) = y,\, \gamma \text{ horizontal} \}.
\end{equation}

\begin{remark} The above definition includes rank-varying sub-Riemannian structures on $M$, see \cite{ABBEMS}. When $\dim \distr_{x}$ is constant, then $\distr$ is a vector distribution in the classical sense. If $m\leq n$ and the vector fields $X_{1},\ldots,X_{m}$ are linearly independent everywhere, they form a basis of $\distr$ and $g$ coincides with the inner product on $\distr$ for which $X_{1},\ldots,X_{m}$ is an orthonormal frame.
\end{remark}
By Chow-Rashevskii theorem, the bracket-generating condition implies that $d_{SR}: M \times M \to \R$ is finite and continuous. If the metric space $(M,d_{SR})$ is complete, then for any $x,y \in M$ the infimum in \eqref{eq:infimo} is attained. In place of the length $\ell$, it is often convenient to consider the \emph{energy functional} 
\begin{equation}
J(\gamma) = \frac{1}{2}\int_0^1 g(\dot\gamma(t),\dot\gamma(t)) dt.
\end{equation}
On the space of horizontal curves defined on a fixed interval and with fixed endpoints, the minimizers of $J$ coincide with the minimizers of $\ell$ parametrized with constant speed. Since $\ell$ is invariant by reparametrization, and every horizontal curve is the reparametrization of a constant-speed one, we define \emph{geodesics} as horizontal curves that locally minimize the energy between their endpoints.

The \emph{Hamiltonian} of the sub-Riemannian structure $H : T^*M \to \R$ is defined by
\begin{equation}
H(\lambda) = \frac{1}{2}\sum_{i=1}^{m} \langle \lambda,X_i\rangle^{2}, \qquad \lambda \in T^*M,
\end{equation}
where $X_1,\dots,X_m$ is the generating frame. Here $\langle \lambda,\cdot\rangle$ denotes the dual action of covectors on vectors. Different generating frames defining the same distribution and scalar product at each point yield the same Hamiltonian. The Hamiltonian vector field $\vec{H}$ is the unique vector field such that $\sigma(\cdot,\vec{H}) = dH$, where $\sigma$ is the canonical symplectic form on $T^*M$. In particular, the \emph{Hamilton equations} are
\begin{equation}\label{eq:Ham}
\dot\lambda(t) = \vec{H}(\lambda(t)), \qquad \lambda(t) \in T^*M.
\end{equation}
If $(M,d_{SR})$ is complete, solutions of \eqref{eq:Ham} are defined for all times.

\subsection{End-point map and Lagrange multipliers}\label{s:eplm}

Let $\gamma_u :[0,1] \to M$ be an horizontal curve joining $x$ and $y$, where $u \in L^2([0,1],\R^m)$ is a \emph{control} such that
\begin{equation}
\dot\gamma_u(t) =  \sum_{i=1}^m u_i(t) X_i(\gamma_u(t)), \qquad \text{a.e. } t \in [0,1].
\end{equation}
Let $\mathcal{U} \subset L^2([0,1],\R^m)$ be the neighbourhood of $u$ such that, for $v \in \mathcal{U}$, the equation
\begin{equation}
\dot\gamma_v(t) = \sum_{i=1}^m v_i(t) X_i(\gamma_v(t)), \qquad \gamma_v(0) = x,
\end{equation}
has a well defined solution for a.e. $t \in [0,1]$. We define the \emph{end-point map} with base point $x$ as the smooth map $E_{x}: \mathcal{U} \to M$, which sends $v$ to $\gamma_v(1)$.

We can consider $J : \mathcal{U} \to \R$ as a smooth functional on $\mathcal{U}$. Let $\gamma_u$ be a minimizing geodesic, that is a solution of the constrained minimum problem
\begin{equation}
\min\{J(v) \mid  v \in \mathcal{U},\, E_x(v) = y\}.
\end{equation}
By the Lagrange multipliers rule, there exists a non-trivial pair $(\lambda_1,\nu)$, such that
\begin{equation}\label{eq:multipliers}
\lambda_1 \circ D_u E_x  = \nu D_u J, \qquad \lambda_1 \in T_y^*M, \qquad\nu \in \{0,1\},
\end{equation}
where $\circ$ denotes the composition of linear maps and $D$ the (Fr\'echet) differential. If $\gamma_u : [0,1] \to  M$ with control $u \in \mathcal{U}$ is an horizontal curve (not necessarily minimizing), we say that a non-zero pair $(\lambda_1,\nu) \in T_y^*M \times \{0,1\}$ is a \emph{Lagrange multiplier} for $\gamma_u$ if \eqref{eq:multipliers} is satisfied. The multiplier $(\lambda_1,\nu)$ and the associated curve $\gamma_u$ are called \emph{normal} if $\nu = 1$ and \emph{abnormal} if $\nu = 0$. Observe that Lagrange multipliers are not unique, and a horizontal curve may be both normal \emph{and} abnormal. Observe also that $\gamma_u$ is an abnormal curve if and only if $u$ is a critical point for $E_x$. In this case, $\gamma_u$ is also called a \emph{singular curve}. The following characterization is a consequence of the Lagrange multipliers rule, and can also be seen as a specification of the Pontryagin Maximum Principle to the sub-Riemannian length minimization problem.
\begin{theorem} \label{t:utile}
Let $\gamma_u :[0,1] \to M$ be an horizontal curve joining $x$ with $y$. A non-zero pair $(\lambda_1,\nu) \in T_y^*M \times \{0,1\}$ is a Lagrange multiplier for $\gamma_u$ if and only if there exists a Lipschitz curve $\lambda(t) \in T_{\gamma_u(t)}^*M$ with $\lambda(1) = \lambda_1,$ such that
\begin{itemize}
\item[(N)] if $\nu = 1$ then $\dot{\lambda}(t) = \vec{H}(\lambda(t))$,
\item[(A)] if $\nu =0 $ then $\sigma(\dot\lambda(t), T_{\lambda(t)} \distr^\perp) = 0$,
\end{itemize}
where $\distr^\perp \subset T^*M$ is the set of covectors that annihilate the distribution.
\end{theorem}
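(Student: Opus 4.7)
The plan is to make the Lagrange multiplier identity \eqref{eq:multipliers} fully explicit by computing both differentials $D_u E_x$ and $D_u J$ in terms of the flow of the time-dependent vector field $f_u(t,x) := \sum_{i=1}^m u_i(t) X_i(x)$, and then to re-read everything on the cotangent bundle via the associated Hamiltonian lift.

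First, by standard variation of parameters applied to the Cauchy problem $\dot\gamma = f_u(t,\gamma)$, $\gamma(0)=x$, one obtains for any $w \in L^2([0,1],\R^m)$,
\begin{equation}
D_u E_x(w) = \int_0^1 (P_{1,s})_*\Bigl(\sum_{i=1}^m w_i(s)\, X_i(\gamma_u(s))\Bigr) ds, \qquad D_u J(w) = \int_0^1 \sum_{i=1}^m u_i(s) w_i(s)\, ds,
\end{equation}
where $P_{t,s}$ denotes the diffeomorphism obtained by flowing $f_u$ from time $s$ to $t$. I would then introduce the candidate multiplier curve $\lambda(s) := (P_{1,s})^*\lambda_1 \in T^*_{\gamma_u(s)}M$: this is the unique Lipschitz lift of $\gamma_u$ with $\lambda(1) = \lambda_1$ satisfying $\dot\lambda = \vec{h}^{\,s}_u(\lambda)$, for the linear-in-fiber Hamiltonian $h^s_u(\lambda) := \sum_i u_i(s)\, a_i(\lambda)$, with $a_i(\lambda) := \langle \lambda, X_i \rangle$. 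Unpacking the pullback in the first display yields
\begin{equation}
\langle \lambda_1, D_u E_x(w)\rangle = \int_0^1 \sum_{i=1}^m w_i(s)\, a_i(\lambda(s))\, ds,
\end{equation}
and by the fundamental lemma of the calculus of variations, the identity $\lambda_1 \circ D_u E_x = \nu D_u J$ is equivalent to the pointwise relations $a_i(\lambda(s)) = \nu u_i(s)$ for $i=1,\ldots,m$ and a.e.\ $s \in [0,1]$.

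The two branches of the statement then follow by substitution. In the normal case $\nu = 1$, plugging $u_i = a_i(\lambda)$ back into the defining ODE for $\lambda$ gives $\dot\lambda = \sum_i a_i(\lambda)\, \vec{a}_i(\lambda) = \vec{H}(\lambda)$, which is (N); conversely, any integral curve of $\vec H$ tautologically projects to a horizontal curve with control $u_i = a_i(\lambda)$, and reversing the computation recovers a multiplier. In the abnormal case $\nu = 0$ we obtain $a_i(\lambda(s)) = 0$ for every $i$, hence $\lambda(s) \in \distr^\perp$, and for every $\xi \in T_{\lambda(s)}\distr^\perp$
\begin{equation}
\sigma(\dot\lambda, \xi) = \sigma(\vec{h}^{\,s}_u(\lambda), \xi) = -dh^s_u(\xi) = -\sum_i u_i(s)\, da_i(\xi) = 0,
\end{equation}
since $\distr^\perp = \{a_1 = \cdots = a_m = 0\}$ forces $da_i(\xi) = 0$ on $T\distr^\perp$; this is (A).

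The main obstacle I anticipate is the converse direction in the abnormal case: given only an abstract Lipschitz curve $\lambda(\cdot)$ in $\distr^\perp$ with $\lambda(1) = \lambda_1$ and $\sigma(\dot\lambda, T\distr^\perp) = 0$, one must check that it is necessarily the cotangent lift of $\gamma_u$ generated by $h^s_u$, so that the computation above can be reversed into the Lagrange multiplier identity. This will exploit that the symplectic complement of $T\distr^\perp$ is spanned by the Hamiltonian vector fields $\vec{a}_1,\ldots,\vec{a}_m$, and that the projection constraint $\pi_*\dot\lambda = \dot\gamma_u$, combined with $\lambda \in \distr^\perp$, pins down the horizontal component of $\dot\lambda$, reducing the issue to a pointwise linear-algebraic identification. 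A secondary technical subtlety is the merely $L^2$ regularity of $u$, which forces the use of Carath\'eodory solutions and absolutely continuous flows throughout the variation-of-parameters analysis.
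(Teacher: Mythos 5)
The paper does not actually provide a proof of Theorem~\ref{t:utile}; it states it as "a consequence of the Lagrange multipliers rule" and "a specification of the Pontryagin Maximum Principle," delegating to the references (in particular to \cite{nostrolibro}). So there is no internal proof to compare against; I will assess your argument on its own merits.

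Your approach is the standard PMP-type argument, and the computations you do carry out are correct: the variation-of-parameters expression for $D_u E_x$, the identification $\langle \lambda_1, D_u E_x(w)\rangle = \int \sum_i w_i\, a_i(\lambda(s))\, ds$ via the cotangent lift $\lambda(s) = (P_{1,s})^*\lambda_1$, the reduction of the Lagrange-multiplier identity to the pointwise relations $a_i(\lambda(s)) = \nu u_i(s)$, and the forward derivations of (N) and (A) (including the sign convention $\sigma(\vec{h}^s_u,\xi) = -dh^s_u(\xi)$, consistent with the paper's $\sigma(\cdot,\vec H)=dH$). You also correctly identify the converse of (A) as the genuinely non-trivial step, and your sketch of how to close it — $T_{\lambda}\distr^\perp$ has symplectic complement $\spn\{\vec a_1,\dots,\vec a_m\}$, so $\dot\lambda = \sum_i v_i \vec a_i$ and the projection constraint $\pi_*\dot\lambda = \dot\gamma_u$ identifies the $v_i$ with the $u_i$, whence $\lambda(\cdot)$ must coincide with the cotangent lift $(P_{1,\cdot})^*\lambda_1$ and one reads off $\lambda_1\circ D_uE_x=0$ — is the right argument.

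Two items remain. First, the converse of (A) is only sketched, not proved; it is the part of the theorem that actually requires work, so leaving it as a plan is a genuine gap in a complete write-up (everything else is a direct computation). Second — and this affects both converses, (N) as well as (A) — the step "$\sum_i(v_i - u_i)X_i = 0 \Rightarrow v_i = u_i$" uses that $X_1,\dots,X_m$ are linearly independent along $\gamma_u$. The paper's framework deliberately allows a generating frame with $m > \rank\distr$, and for the normal case you assert without comment that an integral curve of $\vec H$ over $\gamma_u$ "tautologically" has control $u_i = a_i(\lambda)$, when in fact one only gets $\sum_i a_i(\lambda)X_i = \sum_i u_i X_i$. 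You should state explicitly that $(a_i(\lambda))_i$ is the minimal-norm control for the velocity $\dot\gamma_u$ (it is orthogonal to the kernel of $v\mapsto\sum v_i X_i$), and that a Lagrange multiplier for $\gamma_u$ with $\nu=1$ forces $u$ to be this minimal control. The same remark is needed in the abnormal converse to identify $\lambda$ with the cotangent lift. This is a standard and minor repair, but it should be visible rather than tacit, precisely because the paper's set-up is the rank-varying, possibly overcomplete one.
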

In the first (resp.\ second) case, $\lambda(t)$ is called a \emph{normal} (resp.\ \emph{abnormal}) \emph{extremal}. Normal extremals are   integral curves $\lambda(t)$ of $\vec{H}$. As such, they are smooth, and characterized by their \emph{initial covector} $\lambda = \lambda(0)$. A geodesic is normal (resp.\ abnormal) if admits a normal (resp.\ abnormal) extremal. On the other hand, it is well-known that the projection $\gamma_\lambda(t) = \pi(\lambda(t))$ of a normal extremal is locally minimizing, hence it is a normal geodesic. The \emph{exponential map} at $x \in M$ is the map $\exp_x : T_x^*M \to M$, which assigns to $\lambda \in T_x^*M$ the final point $\pi(\lambda(1))$ of the corresponding normal geodesic. The curve $\gamma_\lambda(t)=\exp_x(t \lambda)$, for $t \in [0,1]$, is the normal geodesic corresponding to $\lambda$, which has constant speed $\|\dot\gamma_\lambda(t)\| = \sqrt{2H(\lambda)}$ and length $\ell(\gamma|_{[t_1,t_2]}) = \sqrt{2H(\lambda)}(t_2-t_1)$.

\begin{definition}\label{d:ideal}
A sub-Riemannian structure $(\distr,g)$ on $M$ is \emph{ideal} if the metric space $(M,d_{SR})$ is complete and there exists no non-trivial abnormal length minimizers.\footnote{This means that the only possible abnormal length minimizers are constant curves.}
\end{definition}
The above terminology was introduced in \cite{RiffordCarnot,riffordbook}. By \cite[Sec.\ 5.6]{montgomerybook}, all complete fat structures are ideal. Moreover, the ideal assumption is generic, when the rank of the distribution is at least 3, in the following sense.

\begin{prop}[{\cite[Thm.\ 2.8]{CJT-goh}}]\label{p:generic} Let $k \geq 3$ be a positive integer, and $\mathcal{G}_{k}$ be the set of sub-Riemannian structures $(\distr, g)$ on $M$ with $\rank \distr=k$, endowed with the Whitney $C^{\infty}$ topology. There exists an open dense subset $W_{k}$ of $\mathcal{G}_{k}$ such that every element of $W_{k}$ does not admit non-trivial abnormal minimizers. 
\end{prop}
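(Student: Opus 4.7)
The plan is to combine the Pontryagin necessary conditions for abnormal minimizers -- specifically the Goh condition -- with a Thom transversality argument carried out in a suitable jet bundle of frames. First I would reformulate the problem locally: a structure in $\mathcal{G}_{k}$ is, in a neighborhood of a point, described by a $k$-tuple $X=(X_{1},\ldots,X_{k})$ of pointwise linearly independent smooth vector fields (a local orthonormal frame), and the Whitney $C^{\infty}$ topology on $\mathcal{G}_{k}$ corresponds to the one on such frames modulo orthogonal gauge.

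By Theorem~\ref{t:utile}, an abnormal extremal $\lambda(t)\in \distr^{\perp}$ satisfies $\langle \lambda(t),X_{i}(\gamma(t))\rangle = 0$ for all $i$. The crucial additional input is the Goh condition: any length-minimizing abnormal extremal must moreover satisfy
\begin{equation}
\langle \lambda(t),[X_{i},X_{j}](\gamma(t))\rangle = 0, \qquad 1\leq i<j\leq k.
\end{equation}
This adds $\binom{k}{2}$ pointwise linear constraints on $\lambda$. I would then consider the Goh locus $\Sigma \subset \distr^{\perp}$ cut out by these equations, and stratify it according to the rank of the antisymmetric map $\Lambda^{2}\distr_{x}\to T_{x}M/\distr_{x}$ induced by the brackets $[X_{i},X_{j}]$ modulo $\distr$.

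Next, I would apply Thom's multi-jet transversality theorem to the jet-evaluation map associated with the frame $X$, viewed as a section of a suitable jet bundle $J^{r}(TM)^{k}$ for $r$ large enough. A codimension count shows that for $k\geq 3$ the stratum of frames for which $\Sigma$ admits a non-zero section generating a horizontal abnormal curve has positive codimension, whence a residual (in fact open and dense) set $W_{k}\subset \mathcal{G}_{k}$ along which the Goh locus over any horizontal curve reduces to the zero section. Openness is obtained by a standard compact-exhaustion argument using the Whitney $C^{\infty}$ topology, exploiting the fact that the Goh and transversality conditions are locally stable under small perturbations.

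The hard part is twofold. First, one must set up the jet bundle and the stratification of $\Sigma$ so that Thom's theorem yields precisely an open-dense (not merely residual) set; this involves iterating the Goh condition to obtain higher-order necessary conditions, since a single iteration may fail to produce enough codimension along degenerate strata. This is exactly the step where $k\geq 3$ is essential: for $k=2$ the Goh condition alone does not suffice, and Martinet-type abnormal minimizers generically persist. Second, passing from density on a fixed compact set to genuine open-density in the Whitney topology on the whole, possibly non-compact, manifold $M$ requires a diagonal exhaustion argument controlling perturbations simultaneously on a countable cover by compact subsets. This delicate patching is the technical core of \cite{CJT-goh}, whose transversality machinery I would ultimately invoke rather than re-prove from scratch.
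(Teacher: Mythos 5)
The paper does not actually prove this proposition: it is imported verbatim as Theorem~2.8 of \cite{CJT-goh}, as the bracketed attribution in the proposition's header makes explicit. Your sketch is a reasonable reconstruction of the Chitour--Jean--Tr\'elat strategy (reduce via the Goh second-order necessary condition to the generic absence of Goh singular curves for rank $k\geq 3$, then apply jet transversality with a codimension count and an exhaustion argument for openness in the Whitney topology), but since you too ultimately defer the transversality machinery to \cite{CJT-goh} rather than re-derive it, your argument coincides with the paper's treatment, which is simply a citation. One side remark: your claim that for $k=2$ ``Martinet-type abnormal minimizers generically persist'' is stronger than what is known; what is true is that abnormal curves exist generically for rank~$2$ and that the Goh condition is then vacuous (it follows automatically from differentiating $\langle\lambda,X_i\rangle=0$ along the flow), so the CJT mechanism simply provides no leverage, which is why $k\geq 3$ is required.
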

Next, we recall the definition of conjugate points.
\begin{definition}\label{def:conj}
Let $\gamma:[0,1]\to M$ be a normal geodesic with initial covector $\lambda \in T_x^*M$, that is $\gamma(t) = \exp_x(t\lambda)$. We say that $y=\exp_x(\bar{t}\lambda)$ is a \emph{conjugate point} to $x$ along $\gamma$ if $\bar{t}\lambda$ is a critical point for $\exp_x$.

Given a normal geodesic $\gamma:[0,1]\to M$ and $0\leq s< t\leq 1$, we say that $\gamma(s)$ and $\gamma(t)$ are \emph{conjugate} if $\gamma(t)$ is conjugate to $\gamma(s)$ along $\gamma|_{[s,t]}$.
\end{definition}

In the Riemannian setting, conjugate points along a geodesic are isolated, and geodesics cease to be minimizers after the first conjugate point. In the general sub-Riemannian setting, the picture is more complicated, but this result remains valid for geodesics that do not contain abnormal segments.

\begin{definition}\label{d:noabseg}
A normal geodesic $\gamma:[0,1]\to M$ \emph{contains no abnormal segments} if for every $0\leq s_{1}< s_{2}\leq 1$ the restriction $\gamma|_{[s_{1},s_{2}]}$ is not abnormal.
\end{definition}

\begin{theorem}[Conjugate points and minimality] \label{t:noconj}
Let $\gamma: [0,1] \to M$ be a minimizing geodesic which does not contain abnormal segments. Then $\gamma(s)$ is not conjugate to $\gamma(s')$ for every $s,s'\in [0,1]$ with $|s-s'|<1$.
\end{theorem}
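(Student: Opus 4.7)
The plan is to argue by contradiction, reducing the statement to the sub-Riemannian Morse-type fact that a strictly normal minimizing geodesic cannot be continued past its first conjugate time without losing minimality. Suppose, then, that there exist $s_0 < s_1$ in $[0,1]$ with $s_1 - s_0 < 1$ such that $\gamma(s_0)$ and $\gamma(s_1)$ are conjugate. Since $s_1 - s_0 < 1$, either $s_0 > 0$ or $s_1 < 1$; by applying, if necessary, time-reversal, which sends normal extremals to normal extremals and preserves conjugacy along them (as one checks from the symplectic formulation in Theorem~\ref{t:utile}), one may assume $s_1 < 1$. Then there exists $\varepsilon > 0$ with $s_1 + \varepsilon \leq 1$, and the restriction $\eta := \gamma|_{[s_0,\, s_1+\varepsilon]}$ is a strictly normal minimizing geodesic whose initial point $\eta(0) = \gamma(s_0)$ is conjugate to the interior point $\eta(s_1 - s_0) = \gamma(s_1)$.

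The contradiction follows from the following sub-Riemannian fact: if $\zeta:[0,T] \to M$ is a strictly normal geodesic and $\zeta(T_0)$ is conjugate to $\zeta(0)$ for some $T_0 < T$, then $\zeta$ is not minimizing. Applying this to $\zeta = \eta$ and $T_0 = s_1 - s_0$ contradicts the minimality of $\eta$, which was obtained by restriction of the minimizer $\gamma$. To prove the Morse-type statement, one analyzes the second variation of the sub-Riemannian energy along $\zeta$. Conjugacy at $\zeta(T_0)$ produces a nontrivial sub-Riemannian Jacobi field $W$ along $\zeta|_{[0,T_0]}$ with $W(0) = W(T_0) = 0$; extending $W$ by zero on $[T_0,T]$ yields a horizontal variation with vanishing endpoints whose associated index form is zero. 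A standard smoothing/corner-cutting argument then perturbs this extension into a nearby $C^1$ horizontal variation making the second variation strictly negative, producing a horizontal competitor with strictly smaller energy joining $\zeta(0)$ and $\zeta(T)$. The absence of abnormal sub-segments is used decisively: it ensures that the endpoint map $E_{\zeta(0)}$ is a submersion at the control of $\zeta$, so infinitesimal variations with vanishing endpoints integrate to genuine horizontal variations with fixed endpoints, and the index form admits its standard Morse-theoretic interpretation.

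The main obstacle is the Morse-type fact itself. In the Riemannian setting it reduces to the classical theorem of Morse, but in the sub-Riemannian case the second-variation calculation must be carried out on the Hilbert manifold of horizontal controls, and abnormals are a genuine obstruction; the hypothesis that $\gamma$ contains no abnormal subsegments is exactly what reduces the analysis to the favorable, essentially Riemannian-like situation, in which the second variation, restricted to the kernel of $DE_{\zeta(0)}$, is a well-defined quadratic form detecting local minimality. Rather than reproducing the calculation, I would invoke the treatments of the sub-Riemannian second variation developed in \cite{agzel1,agzel2} and expounded in the textbook reference \cite{nostrolibro}.
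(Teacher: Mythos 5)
Your high-level route is the same as the paper's: reduce to a Morse-type fact (a strictly normal geodesic loses minimality once it passes a conjugate point), handling $s'=1$ by time-reversal and extending slightly to produce an interior conjugate point, and then invoke the Morse fact; this is precisely how Appendix~B derives Theorem~\ref{t:noconj} from its Theorem~\ref{t:a1}. The meaningful difference is in how the Morse-type fact itself is established. You sketch the classical Riemannian argument: take the Jacobi field vanishing at $\zeta(0)$ and $\zeta(T_0)$, extend by zero, and use a corner-smoothing perturbation to produce a strictly negative second variation. The paper (Lemmas~\ref{l:a1}--\ref{l:a2} and Theorem~\ref{t:a1}) instead works directly on the space of controls and studies the function
$\alpha(s) = \inf\{\,\|v\|_{L^2}^2 - \lambda_s\circ D^2_{u^s}E_x(v) : v \in \ker D_{u^s}E_x,\ \|v\|_{L^2}=1\,\}$,
showing it is continuous with $\alpha(0^+)=1$, that it is monotone non-increasing via a simple rescaling of controls (the extension-by-zero trick reappears here in control form), and that it cannot remain $0$ on a positive-length interval because that would force an interval of conjugate times, which Lemma~\ref{l:a2} rules out precisely by using the absence of abnormal subsegments. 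Hence $\alpha<0$ strictly past the first conjugate time and a negative direction for the index form exists without any corner-smoothing. Your corner-cutting sketch is heuristically right but is not quite ``standard'' in the $L^2$ control setting (one must argue why the null direction of the index form on $[0,T_0]$ extended by zero is not again null on $[0,T]$, which is exactly the content the paper extracts from discreteness of conjugate times); the references you cite do prove the fact, but via the control-theoretic route closer to the paper's, not via a literal smoothing-of-kinks argument. So: correct strategy, correct reduction, correct invocation of the right literature, but the one place you gesture at a proof is where the paper (and those references) actually use a different, cleaner mechanism.
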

Theorem \ref{t:noconj} is not new, and well-known to experts. We provide a self-contained proof in Appendix \ref{app:B}, following the arguments of \cite{nostrolibro} (see also \cite{S-index}). Notice that, as in the Riemannian case, $\gamma(1)$ can be conjugate to $\gamma(0)$ along $\gamma$.

\subsection{Regularity of the sub-Riemannian distance}

We recall now some basic regularity properties of the sub-Riemannian distance.
\begin{definition}
Let $(\distr,g)$ be a complete sub-Riemannian structure on $M$, and $x \in M$. We say that $y \in M$ is a \emph{smooth point} (with respect to $x$) if there exists a unique minimizing geodesic joining $x$ with $y$, which is not abnormal, and the two points are not conjugate along such a curve. The \emph{cut locus} $\cut(x)$ is the complement of the set of smooth points with respect to $x$. The \emph{global cut-locus} of $M$ is
\begin{equation}
\cut(M) := \{ (x,y) \in M \times M \mid y \in \cut(x)\}.
\end{equation}
\end{definition}
We have the following fundamental result \cite{Agrasmoothness,RT-MorseSard}.
\begin{theorem}\label{t:pointofsmoothness}
The set of smooth points is open and dense in $M$, and the squared sub-Riemannian distance is smooth on $M \times M \setminus \cut(M)$.
\end{theorem}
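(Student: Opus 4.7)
My approach separates the three claims of the theorem: openness of the smooth locus with respect to a fixed base point $x$, smoothness of $d_{SR}^2$ on $M\times M\setminus\cut(M)$, and density of the smooth locus. The first two are inverse-function-theorem arguments applied to the exponential map, while the density is the subtle point and relies on the cited results of \cite{Agrasmoothness,RT-MorseSard}.

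For openness, fix $x\in M$ and let $y\notin\cut(x)$, with the unique non-abnormal minimizer $\gamma_\lambda(t)=\exp_x(t\lambda)$, non-conjugate at $t=1$. Non-conjugacy means $d\exp_x|_\lambda$ is invertible, so $\exp_x$ restricts to a diffeomorphism of a neighbourhood $U$ of $\lambda$ onto a neighbourhood $V$ of $y$; let $\lambda(\cdot)$ denote its inverse. To check that $V$ is contained in the smooth locus (after possibly shrinking it), I would argue that initial covectors of minimizers from $x$ to any $y_n\to y$ lie in a compact set (since $2H=d_{SR}^2$ is bounded along them), and any subsequential limit produces a minimizer from $x$ to $y$ by continuous dependence of the Hamiltonian flow on initial data; uniqueness at $y$ forces this limit to be $\lambda$. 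Thus eventually every such covector lies in $U$ and equals $\lambda(y_n)$ by injectivity, which gives simultaneously uniqueness, non-abnormality and non-conjugacy for $y_n$.

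Smoothness of $d_{SR}^2$ away from $\cut(M)$ I would obtain by considering the smooth map
\begin{equation}
F\colon T^*M\to M\times M,\qquad F(\lambda)=\bigl(\pi(\lambda),\exp_{\pi(\lambda)}(\lambda)\bigr).
\end{equation}
At a covector $\lambda_0\in T^*_{x_0}M$ producing a non-conjugate minimizer from $x_0$ to $y_0$, the differential $dF|_{\lambda_0}$ is an isomorphism: its projection onto the first factor is a split surjection with vertical kernel, and on the vertical subspace it restricts to $(0,d\exp_{x_0}|_{\lambda_0})$, invertible by non-conjugacy. The inverse function theorem then yields a smooth local inverse $(x,y)\mapsto\lambda(x,y)$, and by the openness argument $\exp_x(t\lambda(x,y))$ is the unique minimizer in a neighbourhood of $(x_0,y_0)$, so $\tfrac12 d_{SR}^2(x,y)=H(\lambda(x,y))$ is smooth there.

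Density is the main obstacle. The complement of the smooth locus splits into three pieces: endpoints admitting several minimizers, endpoints conjugate along some non-abnormal minimizer, and endpoints of abnormal minimizers. The first two pieces are handled by Agrachev's theorem \cite{Agrasmoothness}, which combines local semiconcavity of $y\mapsto d_{SR}^2(x,y)$ away from the abnormal set with a control of the critical values of $\exp_x$ to show that they lie in a set of measure zero. The negligibility of the abnormal endpoints is the Morse--Sard-type statement of \cite{RT-MorseSard}. A full-measure set being in particular dense, this completes the proof. Unlike in the Riemannian case, semiconcavity of $d_{SR}^2(x,\cdot)$ may fail on the abnormal set and the third piece admits no elementary transversality treatment, which is exactly why this last step is the deep content of the theorem.
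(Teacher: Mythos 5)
The paper does not prove Theorem~\ref{t:pointofsmoothness} at all: it is stated and attributed directly to \cite{Agrasmoothness,RT-MorseSard}, with no argument supplied. Your sketch therefore reconstructs more than the paper provides, and its architecture --- inverse function theorem for openness and for local smoothness of $d_{SR}^2$, Morse--Sard type results for density --- is the correct one, and you correctly identify density as the deep part.

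There is, however, a genuine gap in your openness step: you exclude abnormality at nearby endpoints via compactness of initial covectors, arguing that these lie in a compact set because $2H=d_{SR}^2$ is bounded along the minimizers. That presupposes that every minimizer from $x$ to $y_n$ is normal; a strictly abnormal minimizer has no normal initial covector, so compactness in $T^*_xM$ says nothing about it. Moreover, even a normal minimizer with covector close to $\lambda$ could simultaneously admit an abnormal multiplier, which again disqualifies $y_n$ as a smooth point --- the fact that the covector lands in $U$ gives uniqueness and non-conjugacy but not, by itself, non-abnormality. To close the gap one must work first at the level of controls: minimizing controls $u_n$ to $y_n$ are bounded in $L^2$, a subsequence converges weakly, minimality upgrades this to strong convergence to the control of the unique minimizer to $y$, and the set of singular controls is closed in the strong $L^2$ topology since the rank of $D_u E_x$ is lower semicontinuous in $u$. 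Since the limit control is not singular, neither are $u_n$ for $n$ large; only then can you pass to covectors and invoke the local diffeomorphism. The same caveat applies to your inverse-function argument for $d_{SR}^2$ via the map $F(\lambda)=(\pi(\lambda),\exp_{\pi(\lambda)}(\lambda))$, which is otherwise correct but implicitly uses openness of $M\times M\setminus\cut(M)$ (with $x$ varying as well as $y$) and hence needs the identical control-level compactness.
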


\section{Jacobi fields and second differential}\label{s:seconddiff}

If $x \in M$ is a critical point for $f \in C^\infty(M)$, one can define the Hessian of $f$ as
\begin{equation}
\Hess(f)|_x : T_{x}M\times T_{x}M\to \R,\qquad \Hess(f)|_x(v,w)=V(W(f))(x),
\end{equation}
where $V,W$ are local vector fields such that $V(x)=v$ and $W(x)=w$. Since $x$ is a critical point, the definition is well posed, and $\Hess(f)|_x$ is a symmetric bilinear map. The quadratic form associated with the second differential of $f$ at $x$ which, for simplicity, we denote by the same symbol $\Hess(f)|_x : T_x M \to \mathbb{R}$, is
\begin{equation}
\Hess(f)|_x(v)=\frac{d^{2}}{dt^{2}}\bigg|_{t=0} f(\gamma(t)),\qquad \gamma(0)=x,\quad \dot \gamma(0)=v.
\end{equation}
When $x \in M$ is not a critical point, we define the second differential of $f$ as the differential of $df$, thought as a smooth section of $T^*M$.

\begin{definition}[Second differential at non-critical points]\label{d:secdif}
Let $f \in C^\infty(M)$, and
\begin{equation}
df: M\to T^{*}M, \qquad df: x\mapsto d_{x}f.
\end{equation}
The \emph{second differential} of $f$ at $x \in M$ is the linear map
\begin{equation} \label{eq:2dxx}
d^{2}_{x}f:= d_{x}(df): T_{x}M\to T_{\lam}(T^{*}M),
\end{equation}
where $\lambda = d_xf \in T^*M$.
\end{definition}

The image of $df: M\to T^{*}M$ is a Lagrangian\footnote{A Lagrangian submanifold of $T^*M$ is a submanifold such that its tangent space is a Lagrangian subspace of the symplectic space $T_{\lambda}(T^*M)$. A subspace $L \subset \Sigma$ of a symplectic vector space $(\Sigma,\sigma)$ is Lagrangian if $\dim L = \dim\Sigma/2$ and $\sigma|_{L} = 0$.} submanifold of $T^*M$. Thus, the image of the second differential $d^{2}_{x}f (T_x M)$ at a point $x$ is the tangent space of $df(M)$ at $\lam=d_{x}f$, which is an $n$-dimensional Lagrangian subspace of $T_{\lam}(T^{*}M)$ transverse to  $T_\lam(T^*_x M)$. Letting $\pi: T^*M \to M$ be the cotangent bundle projection, and since $\pi\circ df=\mathrm{id}_{M}$, we have that $\pi_{*}\circ d^{2}_{x}f=\mathrm{id}_{T_{x}M}$.

\begin{lemma}\label{l:affine}
Let $\lam \in T_x^*M$. The set $\mathcal{L}_\lambda:= \{d_x^2 f \mid f \in C^{\infty}(M),\, d_x f = \lambda\}$ is an affine space over the vector space of quadratic forms on $T_{x}M$.
\end{lemma}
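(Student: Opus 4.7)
The plan is to establish three things in succession: that $\mathcal{L}_\lambda$ is non-empty, that the difference of any two of its elements is canonically a quadratic form on $T_xM$, and that every quadratic form arises as such a difference. The first point is trivial: in any chart about $x$, the linear function $y \mapsto \langle \lambda, y-x\rangle$ (multiplied by a bump function, if one wants it globally defined on $M$) has differential exactly $\lambda$ at $x$.

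For the second point, I would exploit the key identity $\pi_* \circ d_x^2 f = \mathrm{id}_{T_xM}$ noted in the remark after Definition~\ref{d:secdif}, which holds for every $f$ with $d_xf = \lambda$. Thus, for $f,g \in \mathcal{L}_\lambda$, the difference $d_x^2 f - d_x^2 g$ takes values in $\ker \pi_* = V_\lambda := T_\lambda(T_x^*M)$. Since the fiber $T_x^*M$ is itself a vector space, $V_\lambda$ is canonically identified with $T_x^*M$, and through this identification $d_x^2 f - d_x^2 g$ becomes a linear map $T_xM \to T_x^*M$, equivalently a bilinear form on $T_xM$.

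To see that this bilinear form is symmetric, I would set $h := f-g$; since $d_xh = 0$, the function $h$ has a critical point at $x$ and $\Hess(h)|_x$ is a well-defined symmetric bilinear form in the sense recalled at the start of the section. By linearity of the second differential, $d_x^2 f - d_x^2 g = d_x^2 h$. A short coordinate computation — writing $dh = (\partial h/\partial y^i)\, dy^i$, differentiating the resulting section of $T^*M$ at $x$, and using that the first partial derivatives of $h$ vanish there — shows that $d_x^2 h$ agrees, through the identification $V_0 \cong T_x^*M$, with the classical Hessian $\Hess(h)|_x$. This yields symmetry.

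For the third point, given any quadratic form $Q$ on $T_xM$ and any $f_0 \in \mathcal{L}_\lambda$, I would construct $h \in C^\infty(M)$ with $d_xh = 0$ and $\Hess(h)|_x = Q$ by writing $Q$ as a homogeneous quadratic polynomial in local coordinates centered at $x$ and multiplying by a cutoff. Then $g := f_0 + h$ lies in $\mathcal{L}_\lambda$ and $d_x^2 g - d_x^2 f_0 = Q$ by the computation above, so the action of quadratic forms is free and transitive. The only technical subtlety is to keep straight the canonical identification $V_\lambda \cong T_x^*M$, which depends on the vector space structure of the fiber; once this is fixed, the whole argument collapses to the standard fact that the Hessian at a critical point is a well-defined symmetric bilinear form.
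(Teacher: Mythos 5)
Your proof is correct and follows essentially the same approach as the paper's terse remark after the lemma: the key observation in both is that for $f_1,f_2$ with $d_xf_1=d_xf_2=\lambda$, the point $x$ is critical for $f_1-f_2$, so $d_x^2f_1-d_x^2f_2$ is canonically the classical Hessian $\Hess(f_1-f_2)|_x$ viewed in $\ver_\lambda\cong T_x^*M$. You merely spell out the details the paper leaves implicit (non-emptiness, the vertical-subspace identification via $\pi_*\circ d_x^2f=\mathrm{id}$, symmetry by coordinate computation, and simple transitivity of the action).
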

The above lemma follows from the fact that if $f_1, f_2 \in C^\infty(M)$ satisfy $d_x f_1 = d_x f_2 =\lam$, then $x$ is a critical point for $f_1 - f_2$ and one can define the difference between $d^2_x f_1$ and $d^2_x f_2$ as the quadratic form $\Hess(f_1 - f_2)|_x$.


\begin{remark}\label{r:secdifftwice}
Definition~\ref{d:secdif} can be extended to any $f :M \to \R$ twice differentiable at $x$. In this case, fix local coordinates around $x$, and let $b(x) \in \R^n$ and $A(x) \in \mathrm{Sym}(n\times n)$ such that
\begin{equation}
\lim_{t \downarrow 0} \frac{f(x+tv)-f(x) - t b(x) \cdot v - \tfrac{t^2}{2} v\cdot A(x)v}{t^2} =0, \qquad \forall v \in \R^n.
\end{equation}
Letting $(q,p) \in \R^{2n}$ denote canonical coordinates around $d_x f \in T^*M$, we define
\begin{equation}
d_x^2 f \left(\partial_{q_i}\right) : = \partial_{q_i}|_{d_x f} + \sum_{j=1}^n A_{ij}\partial_{p_j}|_{d_x f}, \qquad \forall i=1,\dots,n.
\end{equation}
This definition is well posed, i.e., it does not depend on the choice of coordinates.
\end{remark}

\subsection{Sub-Riemannian Jacobi fields}
Let $\lambda_t= e^{t\vec{H}}(\lambda_0)$, $t \in [0,1]$ be an integral curve of the Hamiltonian flow. For any smooth vector field $\xi(t)$ along $\lambda_t$, the dot denotes the Lie derivative in the direction of $\vec{H}$, namely
\begin{equation}
\dot{\xi}(t) := \left.\frac{d}{d\eps}\right|_{\eps=0} e^{-\eps \vec{H}}_* \xi(t+\eps).
\end{equation}
A vector field $\J(t)$ along $\lambda_t$ is a \emph{Jacobi field} if it satisfies the equation
\begin{equation}\label{eq:defJF}
\dot{\J} = 0.
\end{equation}
Jacobi fields along $\lambda_t$ are of the form $\J(t) = e^{t\vec{H}}_* \J(0)$, for some unique initial condition $\J(0) \in T_{\lambda_0} (T^*M)$, and the space of solutions of \eqref{eq:defJF} is a $2n$-dimensional vector space. On $T^*M$ we define the smooth sub-bundle with Lagrangian fibers:
\begin{equation}
\ver_{\lambda} := \ker \pi_*|_{\lambda} = T_\lambda(T^*_{\pi(\lambda)} M) \subset T_{\lambda}(T^*M), \qquad \lambda \in T^*M,
\end{equation}
which we call the \emph{vertical subspace}. In this formalism, letting
\begin{equation}
\gamma(t)=\exp_x(t \lambda_0) = \pi \circ e^{t\vec{H}}(\lambda_0), \qquad t \in [0,1],
\end{equation}
we have that $\gamma(s)$ is conjugate with $\gamma(0)$ along the normal geodesic $\gamma$ if and only if the Lagrangian subspace $e^{s\vec{H}}_*\ver_{\lambda_0} \subset T_{\lambda_s}(T^*M)$ intersects $\ver_{\lambda_s}$ non-trivially.

The next statement generalizes the well known Riemannian fact that, in absence of conjugate points, Jacobi fields are either determined by their value and the value of the covariant derivative in the direction of the given geodesic at the initial time, or by their value at the final and initial times.
\begin{lemma}\label{l:mixed}
Assume that, for $s \in (0,1]$, $\gamma(0)$ is not conjugate to $\gamma(s)$ along $\gamma$. Let $\mathcal{H}_{\lambda_i} \subset T_{\lambda_i}(T^*M)$ be transverse to $\ver_{\lambda_i}$, for $i=0,s$. Then for any pair $(J_0,J_s) \in \mathcal{H}_{\lambda_0} \times \mathcal{H}_{\lambda_s}$, there exists a unique Jacobi field $\mathcal{J}(t)$ along $\lambda_t$, $t \in [0,1]$, such that the projection of $\mathcal{J}(i)$ on $\mathcal{H}_{\lambda_i}$ is equal to $J_i$, for $i=0,s$.
\end{lemma}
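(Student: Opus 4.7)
The plan is to parametrize Jacobi fields by their initial condition at $t=0$, which provides a linear isomorphism between the space of Jacobi fields and $T_{\lambda_0}(T^*M)$, and then recast the problem as the invertibility of a linear map between two $n$-dimensional spaces.

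More precisely, first I would write the initial condition using the splitting $T_{\lambda_0}(T^*M) = \mathcal{H}_{\lambda_0} \oplus \mathcal{V}_{\lambda_0}$ as $\mathcal{J}(0) = J_0 + V_0$, where $J_0$ is the prescribed horizontal datum and $V_0 \in \mathcal{V}_{\lambda_0}$ is to be determined. Let $P_s : T_{\lambda_s}(T^*M) \to \mathcal{H}_{\lambda_s}$ denote the projection along $\mathcal{V}_{\lambda_s}$ (which is well defined by transversality). Since $\mathcal{J}(s) = e^{s\vec{H}}_*(J_0 + V_0)$, the condition $P_s \mathcal{J}(s) = J_s$ rewrites as a linear equation in $V_0$:
\begin{equation}
 \Phi_s(V_0) := P_s \circ e^{s\vec{H}}_*\, V_0 \;=\; J_s - P_s\circ e^{s\vec{H}}_* J_0.
\end{equation}
Here $\Phi_s : \mathcal{V}_{\lambda_0} \to \mathcal{H}_{\lambda_s}$ is a linear map between vector spaces of the same dimension $n$.

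The core step is to show that $\Phi_s$ is an isomorphism. If $\Phi_s(V_0) = 0$ for some $V_0 \in \mathcal{V}_{\lambda_0}$, then $e^{s\vec{H}}_* V_0 \in \ker P_s = \mathcal{V}_{\lambda_s}$, hence $e^{s\vec{H}}_* V_0$ lies in the intersection $e^{s\vec{H}}_*\mathcal{V}_{\lambda_0} \cap \mathcal{V}_{\lambda_s}$. By the characterization of conjugate points recalled just before the statement, this intersection is trivial under the non-conjugacy assumption for $s \in (0,1]$, so $V_0 = 0$. Hence $\Phi_s$ is injective, and by equality of dimensions bijective. Consequently there exists a unique $V_0 \in \mathcal{V}_{\lambda_0}$ solving the equation, which in turn determines a unique Jacobi field $\mathcal{J}(t) = e^{t\vec{H}}_*(J_0 + V_0)$ along $\lambda_t$, $t \in [0,1]$, with the required projections at $t=0$ and $t=s$.

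I do not expect any serious obstacle here: the only geometric input is the non-conjugacy assumption, which is used exactly to exclude a nontrivial kernel of $\Phi_s$; the transversality $\mathcal{H}_{\lambda_i} \pitchfork \mathcal{V}_{\lambda_i}$ is used to define the projection $P_s$ and to split the initial condition. A minor point worth emphasizing in the write-up is that the projection at $t=0$ of $\mathcal{J}(0) = J_0 + V_0$ along $\mathcal{V}_{\lambda_0}$ is automatically $J_0$, so the condition at the initial time is used only to fix the horizontal part of the initial datum, whereas the condition at $t=s$ is the one that determines the vertical part $V_0$.
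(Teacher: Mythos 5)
Your proof is correct and follows essentially the same route as the paper's: decompose $\mathcal{J}(0)=J_0+V_0$, push forward by $e^{s\vec{H}}_*$, and use the non-conjugacy assumption (i.e.\ $e^{s\vec{H}}_*\mathcal{V}_{\lambda_0}\cap\mathcal{V}_{\lambda_s}=\{0\}$) to show the projection onto $\mathcal{H}_{\lambda_s}$ of the relevant affine space of possible $\mathcal{J}(s)$ is bijective. You merely make the paper's ``projection of the affine space is a bijection'' step explicit by naming the linear map $\Phi_s$ and verifying injectivity plus dimension count.
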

\begin{proof}
The condition at $t=0$ implies that $\J(0) \in J_0 + \ver_{\lambda_0}$ (an affine space). By definition of Jacobi field, $\J(t) = e^{t\vec{H}}_* \J(0)$, in particular $\J(s) \in e^{s\vec{H}}_* J_0 + e^{s\vec{H}}_* \ver_{\lambda_0}$. By the non-conjugate assumption and since $T_{\lambda_s}(T^*M) = \ver_{\lambda_s} + \mathcal{H}_{\lambda_s}$, the projection of the affine space $e^{s\vec{H}}_* J_0 + e^{s\vec{H}}_* \ver_{\lambda_0}$ on $\mathcal{H}_{\lambda_s}$ is a bijection, yielding the statement.
\end{proof}

\subsection{Jacobi matrices}\label{s:jacmatrices}

We introduce a formalism to describe families of subspaces generated by Jacobi fields. Let $\gamma :[0,1] \to M$ be a normal geodesic, projection of $\lambda_t = e^{t\vec{H}}(\lambda_0)$, for some $\lambda_0 \in T^*M$. Consider the family of $n$-dimensional subspaces generated by a set of independent Jacobi fields $\J_1(t),\dots,\J_n(t)$ along $\lambda_t$, that is
\begin{equation}
\mathcal{L}_t=\spn\{\J_1(t),\dots,\J_n(t)\} \subset T_{\lambda_t}(T^*M).
\end{equation}
Since $\mathcal{L}_t = e^{t\vec{H}}_* \mathcal{L}_0$, then $\mathcal{L}_t$ is Lagrangian if and only if it is Lagrangian at time $t=0$.

Notice that $\mathcal{L}_t$ can be regarded as a smooth curve in a suitable (Lagrange) Grassmannian bundle over $T^*M$. We do not pursue this approach here, and we opt for an extrinsic formulation based on Darboux frames. To this purpose, and in order to exploit the symplectic structure of $T^*M$, fix a Darboux moving frame along $\lambda_t$, that is a collection of smooth vector fields $E_1(t),\dots,E_n(t),F_1(t),\dots,F_n(t)$ such that
\begin{equation}
\sigma(E_i,F_j) - \delta_{ij} = \sigma(E_i,E_j) = \sigma(F_i,F_j) = 0, \qquad \forall i,j=1,\ldots,n,
\end{equation}
and such that the $E_1(t),\ldots,E_n(t)$ generate the vertical subspace $\mathcal{V}_{\lambda_t}=\ker \pi_{*}|_{\lambda_t}$:
\begin{equation}
\mathcal{V}_{\lambda_t}= \spn\{E_1(t),\dots,E_n(t)\}.
\end{equation}
We also denote with $X_i(t):=\pi_* F_i(t)$ the corresponding moving frame along the geodesic $\gamma$. In this case, we say that $E_i(t),F_i(t)$ is a \emph{Darboux lift} of $X_i(t)$. Notice that any smooth moving frame along a normal geodesic admits a Darboux lift along a corresponding normal extremal.

We identify $\mathcal{L}_t = \spn\{\J_1(t),\dots,\J_n(t)\}$ with a smooth family of $2n \times n$ matrices
\begin{equation}\label{eq:jacmatrix}
\mathbf{J}(t) = \begin{pmatrix}
M(t) \\
N(t)
\end{pmatrix}, \qquad t \in [0,1],
\end{equation}
such that, with respect to the given Darboux frame, we have
\begin{equation}\label{eq:howtowriteJ}
\J_{i}(t) = \sum_{j=1}^n E_{j}(t)  M_{ji}(t) +  F_{j}(t)N_{ji}(t), \qquad \forall i=1,\dots,n.
\end{equation}
We call $\mathbf{J}(t)$ a \emph{Jacobi matrix}, while the $n\times n$ matrices $M(t)$ and $N(t)$ represent respectively its ``vertical'' and ``horizontal'' components with respect to the decomposition induced by the Darboux moving frame
\begin{equation}
T_{\lambda_t}(T^*M) = \mathcal{H}_{\lambda_t} \oplus \mathcal{V}_{\lambda_t}, \qquad \text{with} \qquad \mathcal{H}_{\lambda_t} := \spn\{F_1(t),\dots,F_n(t)\}.
\end{equation}

The following property is fundamental for the following.
\begin{lemma}\label{l:subspacesriccati}
There exist smooth families of matrices $A(t),B(t),R(t)$, $t \in[0,1]$, with $B(t),R(t)$ symmetric and $B(t) \geq 0$, such that for any Jacobi matrix $\mathbf{J}(t)$, we have
\begin{equation}\label{eq:mainequation}
\frac{d}{dt} \begin{pmatrix}
M \\
N
\end{pmatrix} = \begin{pmatrix}
-A(t) & - R(t) \\
B(t) & A(t)^*
\end{pmatrix} \begin{pmatrix}
M \\
N
\end{pmatrix}.
\end{equation}
On any interval $I \subseteq [0,1]$ such that $M(t)$ is non-degenerate, the matrix $W(t):= N(t)M(t)^{-1}$ satisfies the Riccati equation
\begin{equation}
\dot{W} = B(t) + A(t)^* W + WA(t) + WR(t) W.
\end{equation}
The associated family of subspaces $\mathcal{L}_t$ is Lagrangian if and only if $W(t)$ is symmetric.
\end{lemma}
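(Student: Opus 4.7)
The plan is to derive all three statements from the fact that the Hamiltonian flow $e^{t\vec H}$ is a symplectomorphism, combined with the relation $\mathcal J(t) = e^{t\vec H}_* \mathcal J(0)$ between a Jacobi field and its initial data. First, since $\{E_i(t), F_i(t)\}$ is a moving basis of $T_{\lambda_t}(T^*M)$, the derivatives $\dot E_j$, $\dot F_j$ admit a unique expansion in this basis; I will take the coefficients to be the entries of the matrices $-A, -R, B, A^*$ appearing in \eqref{eq:mainequation}. Substituting the expansion \eqref{eq:howtowriteJ} of $\mathcal J_i$ into the Jacobi equation $\dot{\mathcal J}_i = 0$, using the Leibniz rule, and equating coefficients along the independent frame vectors then yields the ODE \eqref{eq:mainequation} for $(M,N)$.

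Second, to obtain the constraints on $A, B, R$, I would use the invariance of $\sigma$ along the flow. Differentiating the Darboux identities $\sigma(E_i, F_j) = \delta_{ij}$, $\sigma(E_i, E_j) = 0$, $\sigma(F_i, F_j) = 0$ along $\lambda_t$ and substituting the expansions of $\dot E_j$ and $\dot F_j$ gives, after an elementary calculation, the symmetries $R = R^*$ and $B = B^*$, together with the required identification of the bottom-right block with $A^*$; equivalently, the structure matrix belongs to $\mathfrak{sp}(2n)$. For the positivity $B(t) \geq 0$, I would work in canonical coordinates $(q,p)$ adapted to $\lambda_t$ so that $E_i = \partial_{p_i}$ at this point. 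The linearization of $\vec H$ along $\lambda_t$ then has block structure $\begin{pmatrix} H_{pq} & H_{pp} \\ -H_{qq} & -H_{qp} \end{pmatrix}$, whose top-right block equals the Gram matrix $H_{pp}(\lambda_t) = (\langle \lambda_t, X_i\rangle\langle\lambda_t, X_j\rangle\text{-Hessian})_{ij}$ of the generating frame, which is manifestly positive semi-definite; this matches $B$ after identification.

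The Riccati equation follows by direct computation of $\dot W$ from $W = NM^{-1}$, the ODEs $\dot M = -AM - RN$ and $\dot N = BM + A^*N$, and $\frac{d}{dt}M^{-1} = -M^{-1}\dot M M^{-1}$. For the Lagrangian characterization, a direct expansion using the Darboux relations yields
\[
\sigma(\mathcal J_i, \mathcal J_j) = (M^* N - N^* M)_{ij},
\]
so $\mathcal L_t$ is Lagrangian if and only if $M^* N = N^* M$; on any interval where $M$ is invertible this is equivalent to $W = W^*$. Since $\mathcal L_t = e^{t\vec H}_* \mathcal L_0$ and the flow is symplectic, this property is preserved in time, as it must be.

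The main technical subtlety I expect is the intrinsic identification of $B$ with the sub-Riemannian cometric and the verification of its non-negativity: unlike the Riemannian case, where $B$ would simply be the identity, in the sub-Riemannian setting $B$ has rank equal to that of $\distr_{\gamma(t)}$, and one must be careful never to invoke its invertibility in the remainder of the theory. A subordinate bookkeeping issue is keeping track of transpose versus adjoint signs when passing between the intrinsic symplectic framework and the coordinate computations used to display the matrices.
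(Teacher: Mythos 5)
Your proposal is correct and follows essentially the same route as the paper: expand $\dot E$, $\dot F$ in the moving Darboux frame, use invariance of $\sigma$ under $e^{t\vec H}$ (equivalently, that the structure matrix lies in $\mathfrak{sp}(2n)$) to get $C = A$ and the symmetry of $B$ and $R$, identify $B$ with the fiberwise Hessian $H_{pp}$ of the Hamiltonian to obtain $B\geq 0$, and then read off the Riccati equation and the Lagrangian characterization by direct computation. One small glitch: your parenthetical description of $H_{pp}(\lambda_t)$ is garbled — the correct statement is that $(H_{pp})_{ij} = \sum_k X_k^i X_k^j$ is the Gram matrix of the generating vector fields, hence positive semi-definite (with kernel precisely the annihilator of $\distr_{\gamma(t)}$), which is what the paper expresses intrinsically as $B(t)=2H(E,E)$.
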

\begin{proof}
By completeness of the frame, there exist smooth matrices $A(t),B(t),C(t)$  and $R(t)$ such that, for all $t \in [0,1]$, it holds
\begin{equation}\label{eq:mainequation-proof}
\dot{E}  = E \cdot A(t)  -  F\cdot B(t), \qquad \dot{F}  =  E \cdot R(t) -  F \cdot C(t)^*.
\end{equation}
The notation in \eqref{eq:mainequation-proof} means that $\dot{E}_i = \sum_{j=1}^n  E_j  A(t)_{ji}- F_j B(t)_{ji}$, and similarly for $\dot{F}_i$. For $n$-tuples $V$, $W$, the pairing $\sigma(V,W)$ denotes the matrix $\sigma(V_i,W_j)$. In this notation, $\sigma(V,W)^* = -\sigma(W,V)$. Thanks to the Darboux condition, we obtain
\begin{equation}
C(t) = \sigma_{\lambda_t}(\dot{F},E) = -\sigma_{\lambda_t}(F,\dot{E}) = A(t).
\end{equation}
The symmetry of $R(t)$ and $B(t)$ follows similarly. Moreover, we have
\begin{equation}\label{eq:Bpositive}
B(t) = \sigma_{\lambda_t}(\dot{E},E) = 2H (E,E) \geq 0.
\end{equation}
Here, $H$ is the Hamiltonian seen as a fiber-wise bilinear form on $T^*M$, and we identify $T_{\g_t}^* M \simeq T_{\lambda_t}(T^*_{\g_t}M)$. The second equality in \eqref{eq:Bpositive} follows from a direct computation in canonical coordinates on $T^*M$. Observe that $B(t)$ has a non-trivial kernel if and only if the structure is not Riemannian. Finally, equation \eqref{eq:mainequation} follows from \eqref{eq:mainequation-proof}, \eqref{eq:howtowriteJ} and the Jacobi equation $\dot{\J}_i(t) = 0$. The claim about Riccati equation is proved by direct verification. 

Using \eqref{eq:howtowriteJ}, the Jacobi fields $\J_1(t), \dots, \J_n(t)$ associated with the Jacobi matrix $\mathbf{J}(t)$ generate a family of Lagrangian subspaces if and only if
\begin{equation}
0 = \sigma_{\lambda_t}(\mathcal{J},\mathcal{J}) = M(t)^*N(t) - N(t)^* M(t).
\end{equation}
The above identity is equivalent to the symmetry of $W(t)$. 
\end{proof}

In Riemannian geometry, the forthcoming manipulations are greatly simplified thanks to the existence of a particular Darboux frame, such that $A(t) = \mathbbold{0}$, $B(t) = \mathbbold{1}$ and where $R(t)$ represents the Riemannian sectional curvature of all $2$-planes containing $\dot\gamma(t)$. In the sub-Riemannian case, such a convenient frame is not available in full generality. To circumvent this problem we ``lift'' the problem on the cotangent bundle and avoid to pick some particular frame. See \cite{BR-comparison,BR-connection} for details.

\subsection{Special Jacobi matrices} \label{s:sjm}
Fix a normal geodesic $\gamma :[0,1] \to M$ and a smooth moving frame $E_1(t),\dots,E_n(t), F_1(t),\dots,F_n(t)$ along the corresponding extremal. Any Jacobi matrix is uniquely defined by its value at some intermediate time $\mathbf{J}(s)$, for $s \in [0,1]$. The following special Jacobi matrices will play a prominent role:
\begin{align}
\mathbf{J}^\vers_s(t) & = \begin{pmatrix}
M^\vers_s(t) \\
N^\vers_s(t)
\end{pmatrix}, \quad \text{such that} \quad \mathbf{J}^\vers_s(s) = \begin{pmatrix}
\mathbbold{1} \\ \mathbbold{0}
\end{pmatrix}, & \text{(``vertical'' at time $s$)}, \\
\mathbf{J}^\hors_s(t) & = \begin{pmatrix}
M^\hors_s(t) \\
N^\hors_s(t)
\end{pmatrix}, \quad \text{such that} \quad \mathbf{J}^\hors_s(s) = \begin{pmatrix}
\mathbbold{0} \\ \mathbbold{1}
\end{pmatrix}, & \text{(``horizontal'' at time $s$)},
\end{align}
representing, respectively, the families of Lagrange subspaces 
\begin{align*}
e^{(t-s)\vec{H}}_* \spn\{E_1(s),\dots,E_n(s)\} \qquad \text{and} \qquad e^{(t-s)\vec{H}}_* \spn\{F_1(s),\dots,F_n(s)\}.
\end{align*}
\begin{rmk}\label{r:readingconjugate}
Let $s_1,s_2 \in [0,1]$. Then $\gamma(s_1)$ is conjugate to $\gamma(s_2)$ along $\gamma$ if and only if at least one (and then both) the matrices $N^\vers_{s_1}(s_2)$ and $N^\vers_{s_2}(s_1)$ are degenerate.
\end{rmk}
\section{Main Jacobian estimate} 

We now state our two main technical results, which will be proved together.

\begin{theorem}\label{t:FR}
Let $(\distr,g)$ be a sub-Rieman\-nian structure on $M$. Let $x \neq y \in M$ and assume that there exists a function $\phi: M \to \R$, twice differentiable at $x$, such that 
\begin{equation}\label{eq:FR-assump}
\frac{1}{2}d^2_{SR}(x,y) = -\phi(x), \qquad \text{and} \qquad \frac{1}{2}d^2_{SR}(z,y) \geq -\phi(z), \quad \forall z \in M.
\end{equation}
Assume moreover that the minimizing curve joining $x$ with $y$, which is unique and given by $\gamma(t)=\exp_x(t d_x \phi)$, does not contain abnormal segments. Then $x \notin \cut(y)$.
\end{theorem}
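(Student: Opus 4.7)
Set $\lambda := d_x\phi \in T_x^*M$, $\lambda_t := e^{t\vec H}(\lambda)$ and $\gamma(t) := \pi(\lambda_t) = \exp_x(t\lambda)$. By hypothesis $\gamma$ is the unique non-abnormal minimizing geodesic from $x$ to $y$, hence (using the symmetry $y \in \cut(x) \Leftrightarrow x \in \cut(y)$) to establish $x \notin \cut(y)$ it suffices to show that $\gamma(0)$ and $\gamma(1)$ are not conjugate along $\gamma$. I will argue by contradiction, lifting the problem to $T^*M$ and exploiting the Lagrangian subspace encoded by $d_x^2\phi$.

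\emph{Step 1 (Test Lagrangian).} By Remark~\ref{r:secdifftwice}, twice differentiability of $\phi$ at $x$ produces the Lagrangian subspace $L_0 := d_x^2\phi(T_xM) \subset T_\lambda(T^*M)$, transverse to $\ver_\lambda$. Propagate it by the flow: $L_t := e^{t\vec H}_* L_0$. In a Darboux frame along $\lambda_t$ encode $L_t$ as a Jacobi matrix $\mathbf{J}(t) = \binom{M(t)}{N(t)}$ with $\mathbf{J}(0) = \binom{M_0}{\mathbbold 1}$, where $M_0$ represents $d_x^2\phi$, satisfying the linear system of Lemma~\ref{l:subspacesriccati}. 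Using the basis $\{\mathbf{J}^\vers_0,\mathbf{J}^\hors_0\}$ of Section~\ref{s:sjm}, one has $N(t) = N^\vers_0(t)\, M_0 + N^\hors_0(t)$.

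\emph{Step 2 (Using the support inequality).} The function $F(z) := \phi(z) + \tfrac{1}{2}d^2_{SR}(z, y)$ is nonnegative with $F(x) = 0$. For $t \in [0, 1)$ the subsegment $\gamma|_{[0, t]}$ is a non-abnormal minimizer, so Theorem~\ref{t:noconj} ensures $N^\vers_0(t)$ is invertible on $(0, 1)$. By Remark~\ref{r:readingconjugate}, conjugacy of $\gamma(0)$ with $\gamma(1)$ corresponds to $N^\vers_0(1)$ being degenerate. Suppose by contradiction that this occurs, and pick $v_0 \in \ker N^\vers_0(1) \subset \ver_\lambda \cong T_x^*M$. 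Considering the perturbed initial covectors $\lambda + s v_0$, their time-$1$ exponentials $y_s = \exp_x(\lambda + sv_0)$ are at distance $O(s^2)$ from $y$ in local coordinates; through the sub-Riemannian triangle inequality and expansion of the Hamiltonian (equivalently, by tracking the Riccati evolution of $L_t$ along a shooting through $\lambda + s v_0$), this produces points $x_s$ near $x$ for which $F(x_s) < 0$ for suitable $s$, contradicting $F \geq 0$. The precise algebraic mechanism is the Jacobian estimate of Theorem~\ref{p:mainjacest}, which controls $\det N(t)$ in terms of $M_0$, $N^\vers_0(t)$ and $N^\hors_0(t)$ and degenerates exactly in the way excluded by the support inequality.

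\emph{Main obstacle.} The difficulty is that the pointwise inequality $F \geq 0$ at $x$ must be propagated into a Riccati-type bound over the whole interval $[0, 1]$ constraining $\det N^\vers_0(1)$. Unlike in the Riemannian case, where parallel-transported frames allow the reduction to $B(t) \equiv \mathbbold 1$, $A(t) \equiv 0$ and $R(t)$ the sectional curvature block, Lemma~\ref{l:subspacesriccati} only yields $B(t) \geq 0$ semi-definite in the sub-Riemannian setting (with rank equal to that of $\distr_{\gamma(t)}$ and not of $T_{\gamma(t)}M$), and no canonical horizontal complement to $\ver_{\lambda_t}$ is available. The remedy is to work intrinsically on $T^*M$ with arbitrary Darboux frames, following the approach of \cite{curvature,BR-connection,BR-comparison}; the same machinery simultaneously yields the Jacobian estimate of Theorem~\ref{p:mainjacest}, which is why the two results are proved together.
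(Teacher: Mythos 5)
Your global strategy matches the paper's: lift to $T^*M$, encode $d_x^2\phi$ as a Jacobi matrix $\mathbf{J}(t)=\binom{M(t)}{N(t)}$ with $N(0)=\mathbbold{1}$, note via Theorem~\ref{t:noconj} that $N_0^\vers(t)$ is invertible on $(0,1)$, and argue by contradiction that $N_0^\vers(1)$ cannot degenerate; you also correctly flag that the argument must be run jointly with Theorem~\ref{p:mainjacest}. But Step 2 has a genuine gap. The assertion that the covector perturbation $\lambda+sv_0$ with $v_0\in\ker N_0^\vers(1)$ ``produces points $x_s$ near $x$ for which $F(x_s)<0$'' is stated, not proved, and it is precisely the point where the hypothesis $F\geq 0$ must be converted into a quantitative constraint on Jacobi matrices. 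Invoking Theorem~\ref{p:mainjacest} as ``the precise algebraic mechanism'' does not fill the gap, because that estimate is being established simultaneously with Theorem~\ref{t:FR} and rests on the same missing ingredient.

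What is missing is the content of the Positivity Lemma~\ref{l:keylemma1}, the one place the support inequality is actually used. The paper (i) introduces the symmetric matrix $S(t):=N_0^\vers(t)^{-1}N_0^\hors(t)$ and shows $\dot S(t)\leq 0$, a consequence of $H\geq 0$ on fibers; (ii) upgrades the single global inequality $\tfrac12 d^2_{SR}(\cdot,y)\geq -\phi$ into a one-parameter family of second-order conditions at $x$ via the CEMS-type bound
\[
\frac{1}{2s}\,d^2_{SR}\bigl(z,\gamma(s)\bigr)\geq \frac{1}{2}\,d^2_{SR}(z,y)-(1-s)\,d^2_{SR}(x,y),\qquad z\in M,\ s\in(0,1],
\]
which gives $\Hess(c_s+\phi)|_x\geq 0$ for $c_s(z)=d^2_{SR}(z,\gamma(s))/(2s)$; and (iii) translates these, via the change-of-basis Lemma~\ref{l:changeofbasis}, into $M(0)+S(s)\geq 0$ and $S(t)-S(s)\geq 0$. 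Only with these positivity facts can one apply Minkowski's determinant inequality to $N(t)=N_s^\vers(t)N_s^\vers(0)^{-1}+N_0^\vers(t)N_0^\vers(s)^{-1}N(s)$ and obtain \eqref{eq:mainjacest} with both summands non-negative; the contradiction then comes from letting $s\uparrow 1$ in $\bigl(\det N_s^\vers(t)/\det N_s^\vers(0)\bigr)^{1/n}$, which blows up because $N_1^\vers(t)$ stays invertible while $N_1^\vers(0)$ degenerates, forcing $\det(d_xT_t)=+\infty$. Your write-up never derives any of (i)--(iii), so no contradiction is actually reached; you should either carry out the Positivity Lemma or propose an alternative route from $F\geq0$ to a bound forbidding $\det N_0^\vers(1)=0$.
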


We will usually apply Theorem~\ref{t:FR} to situations in which $\phi$ is twice differentiable almost everywhere, in such a way that the map
\begin{equation}
T_t(z)= \exp_z(t d_z \phi), \qquad \mis-\mathrm{a.e.}\, z \in M,
\end{equation}
is well defined. The next result is an estimate for its Jacobian determinant at $x$.	
\begin{theorem}[Main Jacobian estimate]\label{p:mainjacest}
Under the same hypotheses of Theorem~\ref{t:FR}, let $\gamma(t) = \exp_x(t d_x \phi)$, with $t \in [0,1]$, be the unique minimizing curve joining $x$ with $y$, which does not contain abnormal segments. Then, the linear maps
\begin{equation}
d_x T_t : T_x M \to T_{\gamma(t)} M, \qquad d_x T_t := \pi_* \circ e^{t\vec{H}}_* \circ d^2_x \phi,
\end{equation}
satisfy the following estimate, for all fixed $s \in (0,1]$:
\begin{equation}\label{eq:mainjacest}
\det(d_xT_t)^{1/n} \geq \left(\frac{\det N_s^\vers(t)}{\det N_s^\vers(0)}\right)^{1/n} + \left(\frac{\det N_0^\vers(t)}{\det N_0^\vers(s)}\right)^{1/n} \det(d_x T_s)^{1/n}, \quad \forall \, t \in [0,s],
\end{equation}
where the determinants are computed with respect to some smooth moving frame along $\gamma$, and the matrices $N^\vers_s(t)$ are defined as in Section \ref{s:sjm}, with respect to some Darboux lift along the corresponding extremal.

Both terms in the right hand side of \eqref{eq:mainjacest} are non-negative for $t \in [0,s]$ and, for $t \in [0,s)$, the first one is positive. In particular $\det(d_x T_t) > 0$ for all $t \in [0,1)$.
\end{theorem}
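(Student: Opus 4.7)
The plan is to realize $d_x T_t$ as the horizontal component of a Lagrangian Jacobi matrix, decompose that matrix using the no-conjugate-point hypothesis, and conclude via a Minkowski-type determinant inequality. First, since $\phi$ is twice differentiable at $x$, Remark~\ref{r:secdifftwice} gives a well-defined linear map $d^2_x\phi : T_x M \to T_{\lambda_0}(T^*M)$ with $\lambda_0 = d_x\phi$, whose image is Lagrangian and transverse to $\ver_{\lambda_0}$ (by symmetry of the Hessian coefficient matrix). Pushing this forward by the Hamiltonian flow gives a Lagrangian Jacobi matrix $\mathbf{J}(t) := e^{t\vec H}_* \circ d^2_x\phi$. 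Fixing a Darboux lift $E_i(t), F_i(t)$ along the extremal $\lambda(t) = e^{t\vec H}(\lambda_0)$ with $\pi_* F_i(0)$ a basis of $T_xM$, I will write $\mathbf{J}(t) = (M(t),N(t))$, note that $N(0) = \mathbbold{1}$, and recognize that $d_x T_t = \pi_* \circ \mathbf{J}(t)$ is represented by $N(t)$ in the induced bases of $T_x M$ and $T_{\gamma(t)} M$, so $\det d_x T_t = \det N(t)$ up to a frame factor that cancels on both sides of~\eqref{eq:mainjacest}.

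Next, fix $s \in (0,1]$. By Theorem~\ref{t:noconj}, for $s \in (0,1)$ (the endpoint $s=1$ being handled by continuity in $s$), the points $\gamma(0)$ and $\gamma(s)$ are non-conjugate along $\gamma$, and similarly $\gamma(t)$ is non-conjugate to $\gamma(0)$ and to $\gamma(s)$ for $t$ in the relevant range. Via Remark~\ref{r:readingconjugate}, this means $N^\vers_s(0)$, $N^\vers_0(s)$, $N^\vers_s(t)$, and $N^\vers_0(t)$ are non-degenerate. Applying Lemma~\ref{l:mixed} to each Jacobi field in the matrix, I will split $\mathbf{J}(t)$ into a piece with zero horizontal projection at time $s$ (expressible through $\mathbf{J}^\vers_s$) and a piece with zero horizontal projection at time $0$ (expressible through $\mathbf{J}^\vers_0$); matching values at $t=0$ and $t=s$ forces the identity
\begin{equation}
\mathbf{J}(t) = \mathbf{J}^\vers_s(t)\, N^\vers_s(0)^{-1} + \mathbf{J}^\vers_0(t)\, N^\vers_0(s)^{-1}\, N(s),
\end{equation}
and projecting horizontally yields the crucial additive decomposition
\begin{equation}
N(t) = N^\vers_s(t)\, N^\vers_s(0)^{-1} + N^\vers_0(t)\, N^\vers_0(s)^{-1}\, N(s).
\end{equation}

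The analytic heart of the argument is then the positivity lemma (Lemma~\ref{l:keylemma1}), inspired by Villani's ``Jacobi fields forever''. I expect it to imply that the two summands on the right are represented, in an appropriate basis, by symmetric positive semi-definite matrices. Once this is secured, the Minkowski determinant inequality $\det(A+B)^{1/n} \geq (\det A)^{1/n} + (\det B)^{1/n}$ applied to these summands, together with multiplicativity of the determinant, gives exactly~\eqref{eq:mainjacest}. The strict positivity claim follows because for $t \in [0,s)$ we have $|t-s|<1$, so $N^\vers_s(t)$ is non-degenerate by Theorem~\ref{t:noconj}, forcing the first RHS term to be strictly positive; taking $s$ slightly above any given $t \in [0,1)$ then yields $\det d_x T_t > 0$ throughout $[0,1)$.

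The main obstacle I expect is precisely the positivity step. The symmetry of the quadratic forms associated to the two pieces follows from Lagrangianity (as in Lemma~\ref{l:subspacesriccati}) applied not to each block in isolation but to the full Jacobi matrix $\mathbf{J}$, combined with a suitable conjugation (e.g.\ by $N^\vers_s(0)^{-1}$ and $N^\vers_0(s)^{-1}$) to symmetrize each block. The positivity is genuinely sub-Riemannian in flavour: the matrix $B(t)$ in Lemma~\ref{l:subspacesriccati} is only positive semi-definite with a large kernel (corresponding to the non-horizontal directions of $T^*M$), so classical Riemannian Riccati-monotonicity arguments are unavailable, and the positivity lemma must exploit Lagrangianity and the Darboux structure directly. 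Once Step~3 is in place, Theorem~\ref{t:FR} is then deduced by combining the positivity $\det d_x T_t > 0$ for $t \in [0,1)$ with uniqueness of the geodesic (immediate from the presence of $d_x\phi$) and a limit argument at $t=1$ to exclude conjugacy of $x$ and $y$ along $\gamma$.
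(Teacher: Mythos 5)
Your proposal reproduces the paper's proof essentially verbatim: the same Darboux lift, the same decomposition $N(t) = N^\vers_s(t)\,N^\vers_s(0)^{-1} + N^\vers_0(t)\,N^\vers_0(s)^{-1}N(s)$ obtained via Lemma~\ref{l:mixed}, the same reliance on the Positivity Lemma to symmetrize the two summands (by left-multiplying by $K(t)=N^\vers_0(t)^{-1}$), and the same application of Minkowski's determinant theorem. The only imprecision is describing the $s=1$ endpoint as ``handled by continuity in $s$''---what is actually needed, and what you correctly invoke a few lines later, is the contradiction argument: if $\gamma(1)$ were conjugate to $\gamma(0)$ then $\det N^\vers_s(0)\to 0$ as $s\uparrow 1$ while the numerator stays bounded away from zero, forcing the right-hand side of \eqref{eq:mainjacest} to blow up.
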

\begin{rmk}
As a matter of fact, $\det(d_x T_1)$ can be zero. For example, fix $x \notin \cut(y)$. The assumptions of Theorem~\ref{p:mainjacest} are satisfied by any smooth function $\phi$ such that $\phi(z) =  - d^2_{SR}(z,y)/2$ for all $z$ in a neighbourhood $\mathcal{O}_x$ of $x$. In particular $\exp_z(d_z \phi) = \pi\circ e^{\vec{H}}(d_z \phi) = y$ for all $z \in \mathcal{O}_x$, and thus $d_x T_1 = 0$.
\end{rmk}

We first discuss the strategy of the proof of Theorems \ref{t:FR} and \ref{p:mainjacest}. 
It is well known that, if \eqref{eq:FR-assump} holds and $\phi$ is differentiable at $x$, there exists a unique minimizing curve joining $x$ with $y$, which is the normal geodesic $\gamma(t)=\exp_x(td_x \phi)$, $t \in [0,1]$, see e.g.\ \cite[Lemma 2.15]{riffordbook}. By Theorem~\ref{t:noconj}, there are no conjugate points along $\gamma$, except possibly the pair $\gamma(0)$ and $\gamma(1)$. Thanks to this observation, we first prove that \eqref{eq:mainjacest} holds for all $s<1$. Then, we prove that if $\gamma(1)$ is conjugate to $\gamma(0)$, the right hand side of \eqref{eq:mainjacest} tends to $+\infty$ for $s \uparrow 1$ and any fixed $t>0$,  hence $\det(d_xT_t)^{1/n}=+\infty$, leading to a contradiction. This implies that $\gamma(1)$ is not conjugate to $\gamma(0)$, yields the validity of \eqref{eq:mainjacest} for all $s \in (0,1]$, and proves that $y \notin \cut(x)$.

\begin{proof}[Proof of Theorems \ref{t:FR} and \ref{p:mainjacest}]
Let $\lambda(t):=e^{t\vec{H}}(d_x \phi)$, and $\gamma(t) = \pi(\lambda(t))$ the corresponding minimizing geodesic, with $t \in [0,1]$. Let $E_1(t),\dots,E_{n}(t),F_1(t),\dots,F_n(t)$ be a Darboux lift along $\lambda(t)$ of a smooth moving frame $X_1(t),\dots,X_n(t)$ along $\gamma(t)$, that is satisfying
\begin{equation}
\sigma(E_i,F_j) - \delta_{ij} = \sigma(E_i,E_j) = \sigma(F_i,F_j) = 0, \qquad \forall i,j=1,\ldots,n,
\end{equation}
with $X_i(t) = \pi_* F_i(t)$ and $\pi_* E_i(t) = 0$ for $i=1,\ldots,n$ and $t \in [0,1]$.

Since $\phi$ is twice differentiable at $x$, the family of Lagrangian subspaces $e^{t\vec{H}}_* \circ d_x^2 \phi(T_x M) \subset T_{\lambda(t)}(T^*M)$ is well defined for all $t \in [0,1]$, and is associated via the given Darboux frame to the Jacobi matrix
\begin{equation}\label{eq:subsp}
\mathbf{J}(t) = \begin{pmatrix} M(t) \\ N(t) \end{pmatrix}, \qquad \text{such that} \qquad e^{t\vec{H}}_* \circ d_x^2 \phi(X(0)) = E(t) \cdot M(t) + F(t) \cdot N(t).
\end{equation}
In particular, $d_xT_t (X(0)) = X(t) \cdot N(t)$. Let now $s \in (0,1)$, and consider the Jacobi matrices $\mathbf{J}^\vers_0$ and $\mathbf{J}^\vers_s$ of Section~\ref{s:sjm}. Since $\gamma(0)$ is not conjugate to $\gamma(s)$, we have
\begin{equation}\label{eq:eqforNN0}
e^{s\vec{H}}_* \ver_{\lambda(0)} \cap \ver_{\lambda(s)} = \{0\}, \qquad \forall s \in (0,1).
\end{equation}
Equivalently, $N_s^\vers(0)$ and $N_0^\vers(s)$ are invertible. By Lemma~\ref{l:mixed}, a Jacobi matrix is uniquely specified by its horizontal components $N(0)$ and $N(s)$, hence we have
\begin{equation}\label{eq:rewriting}
\mathbf{J}(t) = \mathbf{J}^\vers_s(t) N_s^\vers(0)^{-1}N(0) + \mathbf{J}^\vers_0(t)N_0^\vers(s)^{-1} N(s), \qquad t \in [0,1], \quad s \in (0,1).
\end{equation}
By construction $N(0)= \mathbbold{1}$, and the horizontal component of \eqref{eq:rewriting} reads
\begin{equation}\label{eq:eqforNN}
N(t) = N^\vers_s(t) N_s^\vers(0)^{-1} + N^\vers_0(t)N_0^\vers(s)^{-1} N(s), \qquad t \in [0,1], \quad s \in (0,1).\end{equation}
The next crucial lemma is a consequence of two facts: the non-negativity of the Hamiltonian, and assumption \eqref{eq:FR-assump}. We postpone its proof to Appendix \ref{a:cruciale}.
\begin{lemma}[Positivity]\label{l:keylemma1}
Under the assumptions of Theorem~\ref{t:FR}, there exists a smooth family of $n\times n$ matrices $K(t)=N_0^\vers(t)^{-1}$, defined for $t \in (0,1)$, such that, for all $s \in (0,1)$, we have
\begin{itemize}
\item[(a)] $\det K(t) >0$ for all $t \in (0,1)$,
\item[(b)] $K(t) N^\vers_s(t) N_s^\vers(0)^{-1}\geq 0$, for all $t \in (0,s]$,
\item[(c)] $K(t) N^\vers_0(t)N_0^\vers(s)^{-1} N(s) \geq 0$, for all $t \in (0,1)$.
\end{itemize}
Furthermore, if $\gamma(1)$ is not conjugate to $\gamma(0)$, the above properties hold for all $s \in (0,1]$ and $t \in (0,1]$.
\end{lemma}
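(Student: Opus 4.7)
The plan is to translate the three items into matrix inequalities on the Jacobi side, relying on: the non-conjugacy statement of Theorem \ref{t:noconj} to guarantee invertibility of $N_0^\vers(t)$ on $(0,1)$, a direct Riccati-type identity for (b), and a second-order optimality argument against a smooth barrier for (c). For (a), Theorem \ref{t:noconj} combined with Remark \ref{r:readingconjugate} gives that $N_0^\vers(t)$ is invertible for every $t \in (0,1)$, so $K(t) := N_0^\vers(t)^{-1}$ is smoothly defined there. The continuous function $\det N_0^\vers(t)$ does not vanish on $(0,1)$, hence its sign is constant; it is positive for small $t>0$, because up to a non-vanishing smooth factor $\det N_0^\vers(t)$ equals the Jacobian of the exponential map at $t\lambda(0)$, whose small-time expansion has a positive leading coefficient (cf.\ the distortion asymptotics of Theorem \ref{t:asymptotics-intro}).

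For (b), set $P(t) := K(t) N_s^\vers(t) N_s^\vers(0)^{-1}$. Differentiating with the ODEs of Lemma \ref{l:subspacesriccati} applied to both $\mathbf{J}_0^\vers$ and $\mathbf{J}_s^\vers$, the $A^{\ast}$-contributions cancel, and using the symplectic invariant $(M_0^\vers)^{\ast}N_s^\vers - (N_0^\vers)^{\ast}M_s^\vers \equiv N_s^\vers(0)$ (constant in $t$) together with the symmetry of $M_0^\vers N_0^{\vers,-1}$ (Lagrangian condition) reduces $\dot P$ to
\begin{equation}
\dot P(t) = -K(t)\, B(t)\, K(t)^{\ast}, \qquad P(s) = 0.
\end{equation}
Integrating from $s$ backwards yields
\begin{equation}
P(t) = \int_t^s K(\tau)\, B(\tau)\, K(\tau)^{\ast}\, d\tau, \qquad t \in (0,s],
\end{equation}
which is manifestly symmetric and positive semidefinite, since $B \geq 0$ by Lemma \ref{l:subspacesriccati}.

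For (c), note that $K(t) N_0^\vers(t) N_0^\vers(s)^{-1} N(s) = N_0^\vers(s)^{-1} N(s)$ is independent of $t$, so the task is to show that this matrix is symmetric and non-negative. For $s \in (0,1)$, $\gamma(s) \notin \cut(x)$ (by Theorem \ref{t:noconj} and the uniqueness of the minimizer $x \to y$), so the barrier
\begin{equation}
\psi_s(z) := -\frac{d_{SR}^2(z,\gamma(s))}{2s} - \frac{d_{SR}^2(\gamma(s),y)}{2(1-s)}
\end{equation}
is smooth near $x$. The Young inequality $(a+b)^2 \leq a^2/s + b^2/(1-s)$ combined with the triangle inequality shows $\psi_s(z) \leq -\tfrac{1}{2}d_{SR}^2(z,y) \leq \phi(z)$, with equality at $z=x$; hence $x$ is a local minimum of $\phi - \psi_s$, and, since both are twice differentiable at $x$, we obtain the Hessian comparison $d_x^2\phi \geq d_x^2\psi_s$ in the affine space $\mathcal{L}_{d_x\phi}$. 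Since $\pi \circ e^{s\vec H} \circ d\psi_s \equiv \gamma(s)$ near $x$, one has $d_x^2\psi_s(T_xM) = e^{-s\vec H}_{\ast}\ver_{\lambda(s)}$, represented in our Darboux frame (after normalization of the horizontal block to $\mathbbold{1}$) by the symmetric matrix $M_s^\vers(0) N_s^\vers(0)^{-1}$. Using the invariant $M^{\ast}N_s^\vers - N^{\ast}M_s^\vers$ (constant in $t$, evaluated at $t=0$ and $t=s$) and the identity $N_s^\vers(0) = -N_0^\vers(s)^{\ast}$ (derived analogously from the invariant for the pair $\mathbf{J}_0^\vers, \mathbf{J}_s^\vers$), the comparison translates to
\begin{equation}
M(0) - M_s^\vers(0) N_s^\vers(0)^{-1} = \bigl(N_0^\vers(s)^{-1} N(s)\bigr)^{\ast} \geq 0,
\end{equation}
proving (c).

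Finally, if $\gamma(1)$ is not conjugate to $\gamma(0)$, $N_0^\vers(1)$ is invertible and the integral formula from (b) extends to $s=1$ and $t \in (0,1]$, while the bound in (c) passes to $s=1$ by continuity of the Jacobi matrices and of $N(s)$. The hardest step is (c): one cannot use $-\tfrac{1}{2}d_{SR}^2(\cdot,y)$ directly as a barrier, since its smoothness at $x$ is precisely what the overall argument seeks to establish; the key trick is to build the barrier $\psi_s$ at an intermediate point $\gamma(s)$, where smoothness is already guaranteed, and to invoke Young's inequality to preserve both $\psi_s \leq \phi$ and equality at $x$.
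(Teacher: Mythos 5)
Your proposal is correct and follows essentially the same approach as the paper's proof: the barrier at the intermediate point $\gamma(s)$ for (c), the monotonicity of the $S$-matrix $S(t)=N_0^\vers(t)^{-1}N_0^\hors(t)$ for (b) (your $P(t)$ equals $S(t)-S(s)$, and your formula $\dot P = -K B K^{\ast}$ is, after using the Wronskian invariant and the Lagrangian condition, identical to the paper's geometric identity $\dot S(t) = -2H(Z(t))$), and the change-of-basis identities. The only minor deviation is in (a): the paper establishes positivity of $\det K(t)$ for small $t$ via a self-contained Riccati comparison applied to $W(t)=N_0^\vers(t)M_0^\vers(t)^{-1}$ (using $W(0)=0$ and $B\geq 0$), whereas you invoke the small-time asymptotics of the exponential-map Jacobian and cite Theorem~\ref{t:asymptotics-intro}; the latter is slightly circular as stated, since the asymptotics in that theorem are ultimately derived from the same Jacobi-matrix machinery, though the underlying fact (positive leading coefficient, via privileged coordinates) can indeed be established independently.
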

Minkowski determinant theorem \cite[4.1.8]{Marcusinequalities} states that the function $A \mapsto (\det A)^{1/n}$ is concave on the set of $n\times n$ non-negative symmetric matrices. Thus, by multiplying from the left \eqref{eq:eqforNN} by the matrix $K(t)$ of Lemma~\ref{l:keylemma1}, we obtain
\begin{equation}\label{eq:N01}
\det(d_xT_t)^{1/n} \geq \left(\frac{\det N_s^\vers(t)}{\det N_s^\vers(0)}\right)^{1/n} + \left(\frac{\det N_0^\vers(t)}{\det N_0^\vers(s)}\right)^{1/n} \det(d_x T_s)^{1/n}, \quad t \in [0,s].
\end{equation}
 Notice that we do not use Lemma~\ref{l:keylemma1} to prove \eqref{eq:N01} for $t=0$, but in this case the inequality holds since $d_x T_0 = \mathrm{id}|_{T_xM}$ and $N^\vers_0(0) = \mathbbold{0}$. Hence, we obtain \eqref{eq:N01} for all $t \in [0,s]$, $s \in (0,1)$ and, if $\gamma(0)$ is not conjugate with $\gamma(1)$, also for $s=1$. We claim that, under the assumptions of Theorem~\ref{t:FR}, the latter is always the case.

By contradiction, assume that $\gamma(1)$ is conjugate to $\gamma(0)$. As we already remarked, $N_0^\vers(s)$ and $N_s^\vers(0)$ are non-degenerate for $s \in (0,1)$, but now $\det N_0^\vers(1) = \det N_{1}^\vers(0) = 0$. We claim that, for fixed $t \in (0,1)$, the right hand side of \eqref{eq:N01} tends to $+\infty$ for $s \uparrow 1$. To prove this claim, notice that both terms in the right hand side of \eqref{eq:N01} are non-negative thanks to Lemma~\ref{l:keylemma1}, and therefore
\begin{equation}\label{eq:N02}
\det(d_xT_t)^{1/n} \geq \left(\frac{\det N_s^\vers(t)}{\det N_s^\vers(0)}\right)^{1/n} \geq 0 , \qquad t \in [0,s].
\end{equation}
By Theorem~\ref{t:noconj}, $\gamma(t)$ is not conjugate to $\gamma(1)$ for any fixed $t \in (0,1)$. Hence $N_1^\vers(t)$ is not degenerate. On the other hand $\gamma(0)$ and $\gamma(1)$ are conjugate by our assumption, and $N^\vers_1(0)$ is degenerate. Taking the limit for $s \uparrow 1$, and since the left hand side of \eqref{eq:N02} does not depend on $s$, we obtain $\det(d_xT_t)^{1/n}= +\infty$, leading  to a contradiction.

We have so far proved that there is a unique minimizing geodesic joining $x$ with $y$, which is not abnormal, and $y$ is not conjugate to $x$. This means that $y \notin \cut(x)$, and concludes the proof of Theorem~\ref{t:FR}. Moreover,  \eqref{eq:eqforNN0}-\eqref{eq:eqforNN} hold for all $s \in (0,1]$ and $t \in [0,1]$. Therefore we can  apply Lemma~\ref{l:keylemma1} also for $s=1$, which completes the proof of \eqref{eq:mainjacest} for all $s \in (0,1]$ and $t \in [0,s]$.

We already proved that both terms in the right hand side of \eqref{eq:mainjacest} are non-negative for $t \in [0,s]$ (actually, the second one is non-negative for all $t \in [0,1]$, by part $(c)$ of Lemma~\ref{l:keylemma1}). Now that we also proved that $\gamma(t)$ is not conjugate to $\gamma(s)$ for all possible $0<|s-t|\leq 1$, we obtain that the first term cannot be zero except for $t=s$, and hence it is strictly positive for all $t \in [0,s)$.
\end{proof}

\subsection{Failure of semiconvexity at the cut locus: proof of Theorem~\ref{t:FRc-intro}} We say that a continuous function $f: M \to \R $ fails to be semiconvex at $x \in M$ if, in any set of local coordinates around $x$, we have
\begin{equation}\label{eq:theinfimum}
\inf_{0<|v|<1} \frac{f(x+v) + f(x-v) - 2f(x)}{|v|^2} = - \infty.
\end{equation}
Similarly, we say that $f$ fails to be semiconcave at $x \in M$ if 
\begin{equation}\label{eq:thesupremum}
\sup_{0<|v|<1} \frac{f(x+v) + f(x-v) - 2f(x)}{|v|^2} = + \infty.
\end{equation}
For background on locally semiconvex and semiconcave function we refer to \cite{cannarsabook}.

Let $\mathsf{d}^{2}_{y}:M\to \R$ be the sub-Riemannian squared distance from $y \in M$, that is
\begin{equation}
\mathsf{d}^{2}_{y}(z):=d^{2}_{SR}(z,y), \qquad \forall\,z \in M.
\end{equation}

The following result is a consequence of Theorem~\ref{t:FR}. For ideal structures, one can take $\Omega = M \setminus \{y\}$ in Corollary~\ref{t:FRc}, yielding Theorem~\ref{t:FRc-intro} presented in Section~\ref{s:intro}.

\begin{cor}\label{t:FRc}
Let $(\distr,g)$ be a sub-Riemannian structure on $M$. Let $y  \neq x$. Assume that there exists a neighbourhood $\Omega$ of $x$ such that all minimizing geodesics joining $y$ with points of $\Omega$ do not contain abnormal segments. Then $x \in \cut(y)$ if and only if the function $\mathsf{d}^{2}_{y}$ fails to be semiconvex at $x$.
\end{cor}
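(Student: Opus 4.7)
The equivalence splits into two implications. The trivial direction is: if $x \notin \cut(y)$, then Theorem~\ref{t:pointofsmoothness} gives that $\mathsf{d}^2_y$ is smooth on a neighborhood of $x$, so in particular semiconvex at $x$. The substantial direction is the converse: assuming $\mathsf{d}^2_y$ is semiconvex at $x$, deduce $x \notin \cut(y)$. The plan is to reduce this to Theorem~\ref{t:FR} by \emph{constructing} a smooth auxiliary function $\phi : M \to \R$ for which the two support conditions of that theorem are satisfied.

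\textbf{From pointwise semiconvexity to a supporting paraboloid.} The standing hypothesis on $\Omega$ is the classical sufficient condition for $\mathsf{d}^2_y$ to be locally semiconcave on $\Omega$ (cf.\ \cite{FR-mass,riffordbook}). In particular, at $x$ one has the upper supporting paraboloid
\begin{equation*}
\mathsf{d}^2_y(x+v) \leq \mathsf{d}^2_y(x) + a\cdot v + C|v|^2, \qquad |v|<r,
\end{equation*}
for some $a \in \partial^+\mathsf{d}^2_y(x)$ and $C>0$. On the other hand, the hypothesis that $\mathsf{d}^2_y$ is semiconvex at $x$ in the sense of the paper reads, for some $C'>0$,
\begin{equation*}
\mathsf{d}^2_y(x+v) + \mathsf{d}^2_y(x-v) - 2\mathsf{d}^2_y(x) \geq -C'|v|^2, \qquad |v|<1.
\end{equation*}
Isolating $\mathsf{d}^2_y(x-v)$ in the second inequality and using the upper paraboloid to bound $\mathsf{d}^2_y(x+v)$, one derives (after $v \mapsto -v$) the symmetric \emph{lower} supporting paraboloid
\begin{equation*}
\mathsf{d}^2_y(x+v) \geq \mathsf{d}^2_y(x) + a\cdot v - (C+C')|v|^2, \qquad |v|<\min(r,1),
\end{equation*}
with the \emph{same} linear part $a$. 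As a by-product, $\mathsf{d}^2_y$ is differentiable at $x$ with $\nabla\mathsf{d}^2_y(x) = a$.

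\textbf{Construction of $\phi$ and conclusion.} In a chart around $x$, define the quadratic polynomial
\begin{equation*}
\phi_0(z) := -\tfrac{1}{2}\mathsf{d}^2_y(x) - \tfrac{1}{2}a\cdot(z-x) + \tfrac{1}{2}(C+C')|z-x|^2,
\end{equation*}
which satisfies $\phi_0(z) \geq -\mathsf{d}^2_y(z)/2$ throughout the local ball $B_r(x)$ by the previous step. Since $-\mathsf{d}^2_y/2 \leq 0$ on $M$, a standard smooth cutoff construction --- e.g.\ $\phi := \chi \phi_0 + (1-\chi) K$, with a cutoff $\chi$ equal to $1$ near $x$ and supported inside $B_r(x)$, and a sufficiently large constant $K > 0$ --- yields a $C^\infty$ function $\phi : M \to \R$ such that $\phi$ coincides with $\phi_0$ near $x$, $\phi(x) = -\mathsf{d}^2_y(x)/2$, and $\phi \geq -\mathsf{d}^2_y/2$ globally. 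This $\phi$ is smooth at $x$ (hence a fortiori twice differentiable in the sense of Remark~\ref{r:secdifftwice}), and the unique minimizing geodesic from $x$ to $y$ contains no abnormal segments by the standing assumption. All hypotheses of Theorem~\ref{t:FR} are then in force and we conclude $x \notin \cut(y)$, contradicting $x \in \cut(y)$ and completing the proof.

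\textbf{Expected obstacle.} The delicate point is to turn the purely pointwise semiconvexity assumption of the corollary into a genuine \emph{supporting paraboloid} at $x$, since the paper's definition is a priori weaker than local semiconvexity. The key observation is that the pointwise lower bound on the symmetric second difference combines perfectly with the upper paraboloid coming for free from local semiconcavity of $\mathsf{d}^2_y$ on $\Omega$, yielding the needed two-sided quadratic estimate at $x$ and, in particular, the differentiability of $\mathsf{d}^2_y$ at $x$. Once this analytic fact is secured, the extension of the resulting local paraboloid to a globally defined smooth $\phi$ preserving the global inequality with $-\mathsf{d}^2_y/2$ is a routine cutoff argument, exploiting crucially the non-negativity of $\mathsf{d}^2_y$.
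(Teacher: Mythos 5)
Your proof is correct and follows essentially the same route as the paper's: both derive an upper supporting paraboloid from local semiconcavity of $\mathsf{d}^2_y$ (which the abnormal-free hypothesis guarantees via \cite{CR-Semiconcavity}), combine it with the pointwise finiteness of the symmetric second difference to obtain a lower supporting paraboloid with the same linear part, and then feed the resulting $\phi$ into Theorem~\ref{t:FR}. The only difference is that you flesh out the cutoff construction extending the local paraboloid $\phi_0$ to a globally defined $\phi$ satisfying $\tfrac12\mathsf{d}^2_y \geq -\phi$ on all of $M$, a step the paper leaves implicit.
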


\begin{proof}
We prove the contrapositive of the above statement. First, if $x \notin \cut(y)$ then $f(z):=d^2_{SR}(z,y)$ is smooth in a neighbourhood of $x$ by Theorem~\ref{t:pointofsmoothness}, and hence the infimum in \eqref{eq:theinfimum} is finite. 

To prove the opposite implication, observe that that by \cite[Thm.\ 1, Thm.\ 5]{CR-Semiconcavity} $f$ is locally semiconcave in a neighbourhood of $x$ (for this property it suffices that no minimizing geodesic joining $y$ with points of $\Omega$ is abnormal). By standard properties of semiconcave functions (see \cite[Prop.\ 3.3.1]{CR-Semiconcavity}), there exist  local charts around $x$, and $p \in \R^{n}$, $C \in \R$ such that
\begin{equation}\label{eq:superdiff+}
f(x+v)-f(x) \leq p \cdot v + C|v|^2, \qquad \forall\, |v| < 1.
\end{equation}
Hence, assume that the infimum in \eqref{eq:theinfimum} is finite, that is there exists $K \in \R$ such that, in local charts around $x$, we have
\begin{equation}\label{eq:constantK}
f(x+v) + f(x-v) -2f(x) \geq  K |v|^2, \qquad \forall\, |v| < 1.
\end{equation}
Equations \eqref{eq:superdiff+}--\eqref{eq:constantK} yield that 
there exists a function $\phi: M \to \R$, twice differentiable at $x$, such that $f(z) \geq -\phi(z)$ for all $z \in M$, and $f(x) = -\phi(x)$. Our assumptions imply that the unique geodesic joining $x$ with $y$ does not contain abnormal segments. Hence, by Theorem~\ref{t:FR}, $y \notin \cut(x)$.
\end{proof}
\begin{rmk}\label{r:semisemi}
We observe the following general fact. For any sub-Riemannian structure, the infimum in \eqref{eq:theinfimum} for $f = \mathsf{d}^2_y$ is always finite for $x=y$. On the other hand (when the structure is not Riemannian) $\mathsf{d}^2_y$ fails to be semiconcave at $y$. The first statement follows trivially observing that $\mathsf{d}^2_y(y) = 0$ and $\mathsf{d}^2_y(z) \geq 0$. The second statement is a classical consequence of the Ball-Box theorem \cite{Jeanbook}.
\end{rmk}

\subsection{Regularity versus optimality: the non-ideal case}\label{s:regopt}

Theorem~\ref{t:FRc-intro} is false in the non-ideal case. In fact, consider the standard left-invariant sub-Riemannian structure on the product $\mathbb{H} \times \R$ of the three-dimensional Heisenberg group and the Euclidean line. Denoting points $x = (q,s) \in \mathbb{H} \times \R$, one has
\begin{equation}\label{eq:pitagorico}
d_{SR}^2((q,s),(q',s')) = d^2_{\mathbb{H}}(q,q') + |s-s'|^2.
\end{equation}

Without loss of generality, fix $(q',s') = (0,0)$. The set of points reached by abnormal minimizers is $\Abn(0) = \{(0,s) \mid s \in \R \}$. Here, the squared distance $\mathsf{d}^2_0(q,s):= d^2_{SR}((q,s),(0,0))$ is not smooth, but the infimum in \eqref{eq:theinfimum} is finite. In fact, the loss of smoothness is due to the failure of semiconcavity, see Remark~\ref{r:semisemi}.

Notice that abnormal geodesics joining the origin with points in $\Abn(0)$ are straight lines $t \mapsto (0,t)$, which are optimal for all times. Hence it seems likely that the failure of semiconvexity is related with the loss of optimality, while the failure of semiconcavity is related with the presence of abnormal minimizers. In the conclusion of this section, we formalize this latter statement.

\subsubsection{On the definition of cut locus} \label{s:openquestions}

In this paper, following \cite{FR-mass}, we defined the cut locus $\cut(x)$ as the set of points $y$ where the squared distance from $x$ is not smooth. Classically, the cut locus is  related with the loss of optimality of geodesics. To give a precise definition, we proceed as follows. First, we say that a geodesic $\gamma : [0,T] \to M$ (a horizontal curve which locally minimizes the energy between its endpoints) is maximal if it is not the restriction of a geodesic defined on a larger interval $[0,T']$. The cut time of a maximal geodesic is
\begin{equation}
t_{\mathrm{cut}}(\gamma) := \sup\{t > 0 \mid  \gamma|_{[0,t]} \text{ is a minimizing geodesic}\}.
\end{equation}
Assuming $(M,d_{SR})$ to be complete, we define the \emph{optimal cut locus} of $x \in M$ as
\begin{equation}
\cutopt(x) := \{ \gamma(t_{\mathrm{cut}}(\gamma)) \mid \gamma \text{ is a maximal geodesic starting at } x \}.
\end{equation}
In the ideal case, which includes the Riemannian case, it is well known that
\begin{equation}\label{eq:cutopt}
\cutopt(x) = \cut(x) \setminus \{x\}.
\end{equation}
For a general, complete sub-Riemannian structure $(\distr,g)$ on $M$, let $x \in M$ and define the following sets:
\begin{align*}
\SC^{-}(x) & := \{y \in M \mid \mathsf{d}^2_x \text{ fails to be semiconcave at $y$}\}, \\
\SC^{+}(x) & := \{y \in M \mid \mathsf{d}^2_x \text{ fails to be semiconvex at $y$}\},\\
\Abn(x) & := \{y \in M \mid \exists \text{ abnormal minimizing geodesic joining $x$ to $y$} \}.
\end{align*}
\textbf{Open questions.} Are the following equalities true?
\begin{align}
\cutopt(x) & = \SC^{+}(x),  \label{eq:op1}\\
\Abn(x) & = \SC^{-}(x). \label{eq:op2}
\end{align}
In the ideal case, \eqref{eq:cutopt} holds and $\Abn(x) = \{x\}$. Hence \eqref{eq:op1} follows from Corollary~\ref{t:FRc}, \eqref{eq:op2} follows from the results of \cite{CR-Semiconcavity} (where the general inclusion $\Abn(x)  \supseteq \SC^{-}(x)$ is proved). In particular, the following identities are true in the ideal case:
\begin{equation}
\cut(x) = \cutopt(x) \cup \Abn(x), \qquad \cut(x)=  \SC^{+}(x) \cup \SC^-(x). \label{eq:op3}
\end{equation}
We do not know whether \eqref{eq:op3} remain true in general. Notice that the first union in \eqref{eq:op3}, in general, is not disjoint \cite{RS-cut,BM-leftinvariant}.

We mention that \eqref{eq:op2} holds true for the Martinet flat structure (a rank-varying structure on $\R^3$). In fact, as proved in \cite{ABCK-martinet}, Martinet spheres possess outward corners in correspondence of points reached by abnormal minimizing geodesics, and this implies the loss of semiconcavity. The same characterization holds for the Engel group (a step $3$ and rank $2$ Carnot structure on $\R^4$), and for all free Carnot group of step $2$, as proved in \cite{MM-semiconc}. Finally, in \cite{MM-cut}, the authors proved the inclusion $\cutopt(x) \subseteq \SC^{+}(x)$ for the free Carnot group of step $2$ and rank $3$.		
\section{Optimal transport and interpolation inequalities}

The study of the Monge optimal transportation problem in sub-Riemannian geometry has been initiated in \cite{AR-Heisenberg,FJ-Heisenberg} for the Heisenberg group and subsequently developed in \cite{AL-transportation,FR-mass,Lee-JFA} for more general structures. 

\subsection{Sub-Riemannian optimal transport}\label{s:srtransport}

Let us fix a smooth (outer) measure $\mis$ on $M$, that is, defined by a positive tensor density. In particular $\mis$ is Borel regular and locally finite, hence Radon \cite{EG-Fine}.

The space of compactly supported probability measures on $M$ is denoted by $\mathcal{P}_c(M)$, while $\mathcal{P}_c^{ac}(M)$ is the subset of the absolutely continuous ones w.r.t.\ $\mis$. We denote by $\pi_i: M \times M \to M$, for $i=1,2$, the projection on the $i$-th factor. Furthermore, let $D= \{ (x,y) \in M \times M \mid x=y\}$.

Given two probability measures $\mu_0,\mu_1$ on $M$, we look for \emph{transport maps} between $\mu_0$ and $\mu_1$, that is measurable maps $T: M \to M$, such that $T_\sharp \mu_0 = \mu_1$. Furthermore, for a given cost function $c : M \times M \to \R$, we want to minimize the transport cost among all transport maps, that is solve the Monge problem:
\begin{equation}\label{eq:monge}
C_{\mathcal{M}}(\mu_0,\mu_1)=\min_{T_\sharp \mu_0 = \mu_1}\int_M c(x,T(x)) d\mis(x).
\end{equation}
Solutions of \eqref{eq:monge}, when they exist, are called \emph{optimal transport maps}.

We take from \cite[Thm.\ 3.2]{FR-mass} the main result about well-posedness of the Monge problem, and we specify it to the ideal setting. We give here a simplified but equivalent statement, which does not require a background in optimal transportation.
\begin{theorem}
[Well posedness of Monge problem]\label{t:well-posed}
Let $(\distr,g)$ be an ideal sub-Rieman\-nian structure on $M$, and $\mu_0 \in \mathcal{P}_c^{ac}(M), \,\mu_1 \in \mathcal{P}_c(M)$. Then there exists a unique transport map $T: M \to M$ such that $T_\sharp \mu_0 = \mu_1$, optimal w.r.t.\ the cost
\begin{equation}
c(x,y) = \frac{1}{2}d^2_{SR}(x,y),
\end{equation}
The map $T$ is characterized as follows. There exist a closed set $\mathcal{S}^\psi$ (the static set), an open set $\mathcal{M}^\psi = M \setminus \mathcal{S}^\psi$ (the moving set), and a function $\psi : M \to \R$ locally semiconvex in a neighbourhood of $\mathcal{M}^\psi \cap \supp(\mu_0)$, such that
\begin{itemize}
\item[(i)] For $\mu_0$-a.e.\ $x \in \mathcal{S}^\psi$ then $T(x) = x$;
\item[(ii)] For $\mu_0$-a.e.\ $x \in \mathcal{M}^\psi$ then $T(x) = y$ if and only if 
\begin{equation}\label{eq:csubdiff}
 \psi(x)+ c(x,y) \leq \psi(z) + c(z,y), \qquad \forall z \in M.
\end{equation}
\end{itemize}
In particular for $\mu_0$-a.e.\ $x \in M$ there exists a unique minimizing geodesic between $x$ and $T(x)$ given by
\begin{equation}
T_t(x) := \begin{cases}
\exp_x(t d_x \psi) & x \in \mathcal{M}^\psi \cap \supp(\mu_0), \\
x & x \in \mathcal{S}^\psi \cap \supp(\mu_0),
\end{cases} \qquad t \in [0,1].
\end{equation}
\end{theorem}
\begin{rmk}
In the language of optimal transport, $\psi$ is a Kantorovich potential associated with the problem and \eqref{eq:csubdiff} means that the pair $(x,y)$ belongs to the $c$-subdifferential $\partial_c\psi$.
\end{rmk}

\begin{rmk}
All the results proved in this section hold if, for given $\mu_0 \in \mathcal{P}^{ac}_c(M),\,\mu_1 \in \mathcal{P}_c(M)$, we replace the ideal hypothesis with the assumption that $(M,d_{SR})$ is complete and that there exists an open set $\Omega \subset M \times M$ such that $\supp(\mu_0 \times \mu_1) \subset \Omega$ and all minimizing geodesics with endpoints in $\Omega \setminus D$ contain no abnormal segments. This assumption is crucial for our Jacobian estimates, and it is used through Theorem~\ref{t:noconj}. 

It is currently unknown whether the Monge problem is well posed for general structures satisfying the so-called minimizing Sard property, i.e., where abnormal minimizers from any fixed point reach a set of measure zero. The reason is technical, since in this case one is not able to deduce enough regularity for the Kantorovich potentials associated with the optimal transport problem (see for example \cite{riffordbook}).
\end{rmk}

For what concerns the issue of regularity of $T$, in \cite[Thm.\ 3.7]{FR-mass}, Figalli and Rifford obtained a formula for the differential of the transport map akin the classical one of \cite{CEMS-interpolations} in terms of the Hessian of the distance, under additional hypothesis on the sub-Riemannian cut locus. More precisely, they assume that if $x \in \cut(y)$, then there exist at least two distinct minimizing geodesics joining $x$ with $y$. It turns out that the statement of \cite[Thm.\ 3.7]{FR-mass} holds with no assumptions on the cut locus in the ideal case. The differentiability result is most easily expressed in terms of approximate differential (see e.g. \cite[Sec.\ 5.5]{AGS-Gradientflows}).

\begin{definition}[Approximate differential]
We say that $f : M \to \R$ has an approximate differential at $x \in M$ if there exists a function $g :M \to \R$ differentiable at $x$ such that the set $\{f = g\}$ has density $1$ at $x$ with respect to $\mis$.\footnote{We compute the density using Euclidean balls in local charts around $x$. Since $\mis$ is smooth, it has positive density with respect to the Lebesgue measure in charts, hence the concept of density does not depend on the choice of charts. In particular, Lebesgue density theorem holds.} In this case, the approximate value of $f$ at $x$ is defined as $\tilde{f}(x) = g(x)$, and the approximate differential of $f$ at $x$ is defined as $\tilde d_x f := d_x g : T_x M \to T_{g(x)} M$.
\end{definition}

\begin{theorem}[Regularity of transport]\label{t:differentiability}
Under the same assumptions of Theorem~\ref{t:well-posed}, the map $T_t$ is differentiable $\mu_0$-a.e.\ on $\mathcal{M}^\psi\cap \supp(\mu_0)$, and it is approximately differentiable $\mu_0$-a.e. For $\mu_0$-a.e.\ $x \in M$ its approximate differential is
\begin{equation}\label{eq:differentiability}
\tilde{d}_x T_t = \begin{cases}
\pi_* \circ e^{t\vec{H}}_* \circ d^2_x \psi &  x \in \mathcal{M}^\psi \cap \supp(\mu_0), \\
\mathrm{id}|_{T_xM}  &  x \in \mathcal{S}^\psi \cap \supp(\mu_0).
\end{cases}
\end{equation}
Finally, $\det(\tilde{d}_x T_t) >0$ for all $t \in[0,1)$ and $\mu_0$-a.e.\ $x \in M$.
\end{theorem}
\begin{rmk}
If $\mathcal{S}^\psi$ is empty, which is the case for example when $\supp(\mu_0) \cap \supp(\mu_1)$ is empty, then $T_t$ is differentiable, and not only approximately differentiable, $\mu_0$-a.e.
\end{rmk}
\begin{proof}
The closed set $\mathcal{S}^\psi$ is measurable, $\mu_0 \ll \mis$, and $\mis$ is smooth. Then by applying Lebesgue density theorem we obtain that $T_t$ is approximately differentiable $\mu_0$-a.e.\ on $\mathcal{S}^\psi \cap \supp(\mu_0)$, with approximate differential given by the identity map. Furthermore, $\det(\tilde{d}_x T_t) = 1$ for $\mu_0$-a.e.\ $x \in \mathcal{S}^\psi \cap \supp(\mu_0)$.

We consider now the case $x \in \mathcal{M}^\psi$. Since local semiconvexity is invariant by diffeomorphisms, and since $\mis$ is smooth, Alexandrov theorem in $\R^n$ (see e.g.\ \cite[Thm.\ A.5]{FR-mass}) yields that $\psi$ is twice differentiable at $\mis$-a.e.\ point $x \in \mathcal{M}^\psi$, and its second differential can be computed according to Definition~\ref{d:secdif} and Remark~\ref{r:secdifftwice}. Hence, since $\mu_0 \ll \mis$, the map
\begin{equation}
x \mapsto T_t(x) = \exp_x(td_x \psi) = \pi \circ e^{t\vec{H}} \circ d_x \psi
\end{equation}
is differentiable for $\mu_0$-a.e.\ $x \in \mathcal{M}^\psi$, and its differential is computed as in \eqref{eq:differentiability}.

By Theorem~\ref{t:well-posed}, $y = T_1(x)$ if and only if  $\psi(z) + c(z,y) - \psi(x) - c(x,y) \geq 0$ for all $z \in M$.
In particular, one can apply Theorems~\ref{t:FR} and~\ref{p:mainjacest} with the function $\phi(z)  := \psi(z)- \psi(x) - c(x,T(x))$, at any point $x$ where $\psi$ is twice differentiable, i.e.\ $\mu_0$-almost everywhere on $\mathcal{M}^\psi$. In particular, for all such points $T(x) \notin \cut(x)$ and $\det(d_x T_t)>0$ for all $t \in [0,1)$.
\end{proof}
\begin{cor}
Under the same assumptions of Theorem~\ref{t:well-posed}, for $\mu_0$-a.e.\ $x \in \mathcal{M}^\psi$ we have $T(x) \notin \cut(x)$.
\end{cor}

As a consequence of Theorem~\ref{t:differentiability} and the estimate of Theorem~\ref{p:mainjacest}, we obtain an independent proof of \cite[Thm.\ 3.5]{FR-mass} about the absolute continuity of the Wasserstein geodesic between $\mu_0$ and $\mu_1$.

\begin{theorem}[Absolute continuity of Wasserstein geodesic]\label{t:abscont}
Under the same assumptions of Theorem~\ref{t:well-posed},  there exists a unique Wasserstein geodesic joining $\mu_0$ with $\mu_1$, given by $\mu_t = (T_t)_\sharp \mu_0$, for $t \in [0,1]$. Moreover, $\mu_t \in \mathcal{P}_c^{ac}(M)$ for all $t \in [0,1)$.
\end{theorem}
\begin{proof}
The existence and uniqueness part is standard and is done as in \cite[Sec.\ 6.3, first paragraph]{FR-mass}. Since $\mis$ is a smooth positive measure, the absolute continuity statement is equivalent to the absolute continuity of $\mu_t=(T_t)_\sharp \mu_0$ with respect to Lebesgue measure $\mathcal{L}^d$ in all local coordinate charts, where $\mu_0 = \rho \mathcal{L}^d$, for some $\rho \in L^1(\R^d)$. Thanks to Theorem~\ref{t:differentiability}, in charts $T_t : \R^d \to \R^d$ is approximately differentiable $\rho \mathcal{L}^d$-almost everywhere. Hence, the statement follows from the next lemma, which is a relaxed version of \cite[Lemma 5.5.3]{AGS-Gradientflows}. Its proof for completeness is in Appendix~\ref{a:ambrosio+}.

\begin{lemma}\label{l:ambrosio+}
Let $\rho \in L^1(\R^d)$ be a non-negative function. Let $f: \R^d \to \R^d$ be a measurable function and let $\Sigma_f$ be the set where it is approximately differentiable. Assume there exists a Borel set $\Sigma \subseteq \Sigma_f$ such that the difference $\{\rho >0 \} \setminus \Sigma$ is $\mathcal{L}^d$-negligible. Then $f_\sharp(\rho \mathcal{L}^d) \ll \mathcal{L}^d$ if and only if $|\det( \tilde{d}_x f) |>0$ for $\mathcal{L}^d$-a.e. $x \in \Sigma$. 

In this case, letting $f_\sharp (\rho \mathcal{L}^d)  = \rho_f \mathcal{L}^d$, we have
\begin{equation}
\rho_f(y) = \sum_{x \in \tilde{f}^{-1}(y) \cap \Sigma} \frac{\rho(x)}{|\det(\tilde{d}_x f)|}, \qquad y \in \R^n,
\end{equation}
with the convention that the r.h.s.\ is zero if $y \notin \tilde{f}(\Sigma)$. In particular, if we further assume that $\tilde{f}|_{\Sigma}$ is injective, then we have
\begin{equation}
\rho_f(\tilde{f}(x)) = \frac{\rho(x)}{|\det(\tilde{d}_x f)|}, \qquad  \forall x \in \Sigma.
\end{equation}

\end{lemma}
Notice that, in order to prove Theorem~\ref{t:abscont}, we need only the first implication of Lemma~\ref{l:ambrosio+}, that is if $|\det( \tilde{d}_x f)| >0$ for $\mathcal{L}^d$-a.e.\ $x \in \Sigma$, then $f_\sharp(\rho \mathcal{L}^d) \ll \mathcal{L}^d$.
\end{proof}

Thanks to Theorem~\ref{t:abscont}, for all $t \in [0,1)$, and also $t = 1$ if $\mu_1 \in \mathcal{P}^{ac}_c(M)$, let $\rho_t := d\mu_t / d\mis$. Then we have the following Jacobian identity. 

\begin{theorem}[Jacobian identity]
Under the same assumptions of Theorem~\ref{t:well-posed}, let $\rho_t = d\mu_t / d\mis$ for all $t \in [0,1)$, and also $t = 1$ if $\mu_1 \in \mathcal{P}^{ac}_c(M)$. Then, for $\mu_0$-a.e.\ $x \in M$, we have
\begin{equation} \label{eq:det0}
\frac{\rho_0(x)}{\rho_t(T_t(x))} = \begin{cases} \det (d_x T_t) \frac{\mis(X_1(t),\dots,X_n(t))}{\mis(X_1(0),\dots,X_n(0))} > 0 & x \in \mathcal{M}^\psi \cap \supp(\mu_0), \\
1 & x \in \mathcal{S}^\psi \cap \supp(\mu_0),
\end{cases}
\end{equation}
where $X_1(t),\dots,X_n(t)$ is some smooth moving frame along the geodesic $t \mapsto T_t(x)$, and the determinant of the linear map $d_x T_t : T_x M \to T_{T_t(x)}M$ is computed with respect to the given frame, that is
\begin{equation}
d_x T_t (X_i(0)) = \sum_{j=1}^n N_{ij}(t) X_j(t), \qquad \det(d_xT_t) := \det N(t).
\end{equation}
\end{theorem}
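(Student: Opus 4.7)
The plan is to combine the pointwise change-of-variable formula of Lemma~\ref{l:ambrosio+} with the approximate differentiability result of Theorem~\ref{t:differentiability} and the positivity of the Jacobian provided by Theorem~\ref{p:mainjacest}, then convert the resulting coordinate identity into the frame-based expression appearing in the statement.

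First, I would localize in a coordinate chart where $\mis = \omega\,\mathcal{L}^n$ for some smooth positive density $\omega$, so that $\mu_s = (\rho_s \omega)\,\mathcal{L}^n$ as Lebesgue-absolutely continuous measures (the absolute continuity of $\mu_t$ being guaranteed by Theorem~\ref{t:abscont}). Applying Lemma~\ref{l:ambrosio+} to $f = T_t$ and $\rho = \rho_0 \omega$, and noting that $\tilde d_x T_t$ exists $\mu_0$-a.e.\ and has positive coordinate determinant on $\mathcal{M}^\psi$ (by Theorems~\ref{t:differentiability} and~\ref{p:mainjacest}) while equalling the identity on $\mathcal{S}^\psi$, yields for $\mu_0$-a.e.\ $x \in M$ the pointwise identity
\begin{equation*}
\rho_0(x)\,\omega(x) \;=\; \rho_t(T_t(x))\,\omega(T_t(x))\,\bigl|\det \tilde d_x T_t\bigr|_{\mathrm{coord}},
\end{equation*}
where the subscript denotes the determinant in the canonical coordinate basis.

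Next, I would translate the coordinate determinant into the frame-based quantity in the statement. Writing $X_i(s) = \sum_j A_{ji}(s)\,\partial_j$ in coordinates, the defining relation $d_x T_t(X_i(0)) = \sum_j N_{ji}(t)\,X_j(t)$ becomes the matrix equation $J\cdot A(0) = A(t)\cdot N(t)$, where $J$ is the coordinate Jacobian of $T_t$. Taking determinants and combining with $\mis(X_1(s),\dots,X_n(s)) = \omega(\gamma(s))\,\det A(s)$, the factor $\omega$ cancels and one recovers
\begin{equation*}
\frac{\rho_0(x)}{\rho_t(T_t(x))} \;=\; \det N(t)\cdot \frac{\mis(X_1(t),\dots,X_n(t))}{\mis(X_1(0),\dots,X_n(0))},
\end{equation*}
with the absolute value dropped thanks to the positivity $\det N(t) > 0$ for $t\in[0,1)$ supplied by Theorem~\ref{p:mainjacest}. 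On the static set, $T_t(x) = x$ and $\tilde d_x T_t = \mathrm{id}|_{T_xM}$ force both factors on the right to equal $1$, giving $\rho_0(x) = \rho_t(x)$ as required.

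The principal technical obstacle is the appeal to the pointwise (second) form of Lemma~\ref{l:ambrosio+}, which demands injectivity of $T_t$ on a set of full $\mu_0$-measure. This is expected to follow from $c$-cyclical monotonicity of the optimal plan combined with the $\mu_0$-a.e.\ uniqueness of minimizing geodesics from Theorem~\ref{t:well-posed}: two distinct optimal geodesics cannot share a common intermediate point at time $t<1$ without contradicting the minimality of their extensions to time $1$, and similarly a point of $\mathcal{M}^\psi$ cannot collide at positive time with a point of $\mathcal{S}^\psi$. If this step proves delicate, one may instead invoke the multi-valued first half of Lemma~\ref{l:ambrosio+} to obtain absolute continuity, and reduce the pre-image fibers to singletons $\mu_0$-a.e.\ as a separate lemma; once this is settled, the remaining manipulations are elementary linear algebra.
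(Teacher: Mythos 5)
Your argument is correct in structure and fills in the linear-algebra passage from coordinate Jacobian to frame determinant more explicitly than the paper bothers to. The one place where you genuinely diverge from the paper is the $\mu_0$-a.e.\ injectivity of $T_t$, needed to invoke the pointwise half of Lemma~\ref{l:ambrosio+}. You sketch a $c$-cyclical monotonicity argument (two minimizers meeting at an intermediate time $t<1$ forces, via the strict convexity of the quadratic cost and smoothness of normal extremals in the ideal case, that the two geodesics coincide). That does work, but it is the longer road. The paper instead observes that $\mu_t\in\mathcal{P}_c^{ac}(M)$, so there are optimal maps in both directions, $T_t$ from $\mu_0$ to $\mu_t$ and $S_t$ from $\mu_t$ to $\mu_0$; uniqueness of optimal maps then forces $S_t\circ T_t=\mathrm{id}$ $\mu_0$-a.e., and injectivity drops out for free. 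If you go your way, you do need to actually carry out the cyclical-monotonicity argument rather than deferring it --- the fallback you propose (using the multi-valued first half of Lemma~\ref{l:ambrosio+} and then "reducing fibers to singletons as a separate lemma") still requires essentially the same injectivity statement, so it does not avoid the work. Everything else, including the treatment of the static set $\mathcal{S}^\psi$ and the sign of $\det N(t)$ coming from Theorem~\ref{p:mainjacest}, is fine.
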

\begin{remark}In the Riemannian case,  when $\mis=\mis_{g}$ is the Riemannian volume, one can compute the determinant in \eqref{eq:det0} with respect to orthonormal frames, eliminating any dependence on the frame and obtaining the classical Monge-Amp\`ere equation.\end{remark}
\begin{proof}
By Theorem~\ref{t:abscont}, $\mu_t = (T_t)_\sharp \mu_0 \ll \mu_0$, hence one can repeat the arguments in the last paragraph of \cite[Sec.\ 6.4]{FR-mass}. Since $\mu_t \in \mathcal{P}_c^{ac}(M)$, there are optimal transport maps $T_t,S_t$ such that $(T_t)_\sharp \mu_0 = \mu_t$ and $(S_t)_\sharp \mu_t = \mu_0$. By uniqueness of the transport map, we obtain that $T_t$ is $\mu_0$-a.e.\ injective. Hence we can use the second part of Lemma~\ref{l:ambrosio+}, and in particular for $\mu_0$-a.e.\ $x \in \mathcal{M}^\psi$ we have
\begin{equation}
\frac{\rho_0(x)}{\rho_t(T_t(x))} = \det(d_x T_t) \frac{\mis(X_1(t),\dots,X_n(t))}{\mis(X_1(0),\dots,X_n(0))}.
\end{equation}
The extra term in the right hand side is due to the fact that we are not computing $d_x T_t$ in a set of local coordinates, but with respect to a smooth frame.
\end{proof}

\subsection{Distortion coefficients} 

Let $(\distr,g)$ be a sub-Riemannian structure on $M$, not necessarily ideal. The next lemma provides a general bound for the distortion coefficient (see definitions~\ref{d:introdist0} and \ref{d:introdist}).

\begin{lemma}[On-diagonal distortion bound]\label{l:diagonalcoefficient}
Let $\mis$ be a smooth measure on $M$. Then, for any $x \in M$, there exists $Q(x) \geq \dim(M)$ such that
\begin{equation}
\beta_t(x,x) \leq t^{Q(x)}, \qquad \forall t \in [0,1].
\end{equation}
\end{lemma}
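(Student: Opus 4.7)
The plan is to reduce the on-diagonal distortion bound to the volume asymptotics of small sub-Riemannian balls. The elementary geometric input is the triangle-inequality observation: if $\gamma\colon[0,1]\to M$ is a minimizing geodesic with $\gamma(0)=x$ and $\gamma(1)\in\mathcal{B}_r(x)$, then $\gamma$ has length at most $r$, hence its $t$-midpoint satisfies $d_{SR}(x,\gamma(t))\leq tr$. This yields the set inclusion
\begin{equation}
Z_t(x,\mathcal{B}_r(x))\subseteq \mathcal{B}_{tr}(x),
\end{equation}
and plugging this into Definition~\ref{d:distcoeff} gives immediately
\begin{equation}
\beta_t(x,x)\leq \limsup_{r\downarrow 0}\frac{\mis(\mathcal{B}_{tr}(x))}{\mis(\mathcal{B}_r(x))}.
\end{equation}

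The remaining step is the classical volume asymptotics for small sub-Riemannian balls. For a smooth (positive) measure $\mis$, a result going back to Mitchell, building on the Ball-Box theorem of Nagel-Stein-Wainger, states that there exists a positive constant $c(x)$ such that
\begin{equation}
\mis(\mathcal{B}_r(x))\sim c(x)\, r^{Q(x)}\qquad \text{as }r\downarrow 0,
\end{equation}
where $Q(x)=\sum_{i\geq 1} i\,(\dim\distr^{i}_x-\dim\distr^{i-1}_x)$ is the homogeneous dimension of the associated graded nilpotent structure at $x$ (here $\distr^i_x$ denotes the $i$-th step of the flag of the distribution). Substituting this asymptotic into the previous displayed inequality yields
\begin{equation}
\beta_t(x,x)\leq \lim_{r\downarrow 0}\frac{c(x)(tr)^{Q(x)}}{c(x)\,r^{Q(x)}}=t^{Q(x)},
\end{equation}
as required.

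Finally, to justify $Q(x)\geq \dim M$ (with equality iff the structure is Riemannian at $x$), I would note that the flag $\distr_x\subseteq \distr^2_x\subseteq\cdots\subseteq T_xM$ has strictly increasing dimensions up to the bracket-generating step, so each layer after the first contributes with weight at least $2$, producing
\begin{equation}
Q(x)=\sum_{i\geq 1} i\,(\dim\distr^i_x-\dim\distr^{i-1}_x)\geq \sum_{i\geq 1}(\dim\distr^i_x-\dim\distr^{i-1}_x)=\dim M,
\end{equation}
with equality precisely when the sum collapses to its first term, i.e.\ when $\distr_x=T_xM$.

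There is essentially no obstacle here; the only non-trivial ingredient is the pointwise Ball-Box volume asymptotic, which is standard and available in the references already cited in the paper (\cite{nostrolibro,bellaiche,Jeanbook}). The argument itself is a one-line consequence of the triangle inequality together with this asymptotic, so the lemma can be justified in just a few lines.
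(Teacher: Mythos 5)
Your proposal is correct and follows essentially the same route as the paper: reduce to the inclusion $Z_t(x,\mathcal{B}_r(x)) \subseteq \mathcal{B}_{tr}(x)$ (a direct consequence of the triangle inequality), then invoke the sharp volume asymptotics of small sub-Riemannian balls, $\mis(\mathcal{B}_r(x)) \sim C(x)\, r^{Q(x)}$, where $Q(x)=\sum_{i=1}^n w_i(x)$ is the homogeneous dimension at $x$. Your argument for $Q(x)\geq \dim M$ (telescoping the flag, each weight $\geq 1$, equality iff $\distr_x = T_xM$) is also correct and matches the implicit reasoning in the paper.

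The one point worth being careful about is precisely the ingredient you delegate to the literature. The crude Ball-Box theorem of Nagel--Stein--Wainger only gives a two-sided quasi-isometric bound $c\,r^{Q(x)} \leq \mis(\mathcal{B}_r(x)) \leq C\,r^{Q(x)}$ with $0<c<C$, which would yield $\beta_t(x,x) \leq (C/c)\,t^{Q(x)}$ — not what is wanted. You correctly state the refined version with an actual limit, which is what is needed. The paper, rather than citing it, proves this refined asymptotic inline via privileged coordinates and nonhomogeneous dilations; one reason is that the sharp pointwise asymptotic must hold at \emph{every} point (including singular points of the growth vector, such as the line $\{x=0\}$ in the Grushin plane), and another is that the standard reference \cite[Thm.\ 7.32]{bellaiche} contains a typo that the paper flags and corrects in a footnote. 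So the two proofs are the same in spirit; yours is shorter because it cites the ball-volume asymptotic, while the paper reproves it to sidestep the literature issue and to cover all points of a possibly rank-varying structure.
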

\begin{proof}
The proof is based on privileged coordinates and dilations in sub-Riemannian geometry, see \cite{Jeanbook} for reference. Fix $x \in M$, and let $z$ denote a system of privileged coordinates on a neighbourhood $\mathcal{O}$ of $x$ (which we identify from now on with a relatively compact open set of $\R^n$, where $x$ corresponds to the origin). We claim that there exists $Q(x) \geq \dim(M)$ and a constant $C(x)>0$ such that, for sufficiently small $\varepsilon$, we have
\begin{equation}
\mis(\mathcal{B}_{\varepsilon}(x)) = \varepsilon^{Q(x)}C(x)\left(1+ O(\varepsilon)\right).
\end{equation}
This claim, together with the observation that $Z_t(x,\mathcal{B}_r(x)) \subseteq \mathcal{B}_{tr}(x)$, implies the statement. In order to prove the claim, in the given set of privileged coordinates, let $\mis = \mis(z) d\mathcal{L}(z)$ for some smooth, strictly positive function $\mis$. Assume $\varepsilon$ to be sufficiently small such that $\mathcal{B}_{\varepsilon}(x) \subset \mathcal{O}$. Let $\delta_{\varepsilon}$ be the non-homogeneous dilation defined by the given system of privileged coordinates at $x$, with non-holonomic weights $w_i(x)$, for $i=1,\ldots,n$, that is $\delta_\varepsilon(z_1,\dots,z_n) = (\varepsilon^{w_1} z_1,\dots,\varepsilon^{w_n} z_n)$. Notice that, in coordinates, $\det(d_z \delta_\varepsilon) = \varepsilon^{Q(x)}$, where $Q(x) = \sum_{i=1}^n w_i(x)$. 

As a consequence of the distance estimates in \cite[Thm.\ 2.2]{Jeanbook}, for all sufficiently small $\varepsilon >0$ there exists $\alpha(\varepsilon) \downarrow 0$ such that
\begin{equation}\label{eq:estimateballs}
\widehat{\mathcal{B}}_{(1-\alpha(\varepsilon))\varepsilon} \subseteq \mathcal{B}_{\varepsilon}(x) \subseteq \widehat{\mathcal{B}}_{(1+\alpha(\varepsilon))\varepsilon},
\end{equation}
where $\widehat{\mathcal{B}}$ denotes the ball of the nilpotent structure, centered at the origin, in this set of privileged coordinates. By the homogeneity with respect to $\delta_\varepsilon$, we have
\begin{equation}
\widehat{\mathcal{B}}_{1- \alpha(\varepsilon)} \subseteq \delta_{1/\varepsilon}(\mathcal{B}_{\varepsilon}(x)) \subseteq \widehat{\mathcal{B}}_{1+\alpha(\varepsilon)}.
\end{equation}
The above relation, and the monotonicity of the Lebesgue measure as a function of the domain, imply that there exists a constant $B(x) = \mathcal{L}^{n}(\widehat{\mathcal{B}_1})>0$, such that
\begin{equation}
\lim_{\varepsilon \to 0} \mathcal{L}^n(\delta_{1/\varepsilon}(\mathcal{B}_\varepsilon)(x)) = B(x).
\end{equation}
Hence, since $\delta_\varepsilon$ and $\mis$ are smooth, we have
\begin{align}
\mis(\mathcal{B}_\varepsilon) & = \int_{\mathcal{B}_\varepsilon} \mis(z) \mathcal{L}^n(dz) \\
& = \int_{\delta_{1/\varepsilon}(\mathcal{B}_{\varepsilon})} \mis(\delta_{\varepsilon}(z)) \det(d_z \delta_\varepsilon)\mathcal{L}^n(dz) \\
& = \varepsilon^{Q(x)} \mis(0) \int_{\delta_{1/\varepsilon}(\mathcal{B}_{\varepsilon})}\left(1+\varepsilon R(\delta_\varepsilon(z))\right) \mathcal{L}^n(dz) \\
& = \varepsilon^{Q(x)} \mis(0) B(x) \left(1 + O(\varepsilon)\right),
\end{align}
where $R(z)$ is a smooth remainder, and the remainder term $O(\varepsilon)$ possibly depends on $x$. This concludes the proof of the claim.
\end{proof}

\begin{lemma}[Computation of the distortion coefficients]\label{l:distortioncomputed}
Let $x,y \in M$, with $x \notin \cut(y)$. Let $X_1,\dots,X_n$ be a smooth frame along the unique geodesic from $x$ to $y$. Then, in terms of the Jacobi matrices defined in Section~\ref{s:sjm}, we have
\begin{align}
\beta_t(x,y)   & = \frac{\det N^\vers_0(t) }{\det N^\vers_0(1)} \frac{\mis(X_1(t),\dots,X_n(t))}{\mis(X_1(1),\dots,X_n(1))}, \qquad \forall\, t \in [0,1], \label{eq:primabeta}\\
\beta_{1-t}(y,x)   & = \frac{\det N^\vers_1(t) }{\det N^\vers_1(0)} \frac{\mis(X_1(t),\dots,X_n(t))}{\mis(X_1(0),\dots,X_n(0))}, \qquad \forall\,t \in [0,1]. \label{eq:secondabeta}
\end{align}
Moreover, $\beta_t(x,y)>0$, for all $t \in (0,1]$.
\end{lemma}
\begin{proof}[Proof of Lemma~\ref{l:distortioncomputed}]
We prove first \eqref{eq:primabeta}. For $t=0$ both sides are zero, hence let $t \in (0,1]$. Let $\lambda_0$ be the initial covector of the unique minimizing geodesic such that $\exp_x(\lambda_0) = y$. Since $x \notin \cut(y)$, there exists an open neighbourhood $\mathcal{O}$ of $y$ and $O \subset T_x^*M$ such that $\exp_x: O \to \mathcal{O}$ is a smooth diffeomorphism, and for all $\lambda' \in O$, the geodesic $t \mapsto \exp_x(t \lambda')$ is the unique minimizing geodesic joining $x$ with $y' = \exp_x(\lambda')$, and $y'$ is not conjugate with $x$ along such a geodesic. Assuming $r$ sufficiently small such that $\mathcal{B}_r(y) \subset \mathcal{O}$, let $A_r \subset O$ be the relatively compact set such that $\exp_x(A_r) = \mathcal{B}_r(y)$. The map $\exp^t_x(\cdot) = \exp_x(t\cdot)$ is a smooth diffeomorphism from $A_r$ onto $Z_t(x,\mathcal{B}_r(y))$. In particular, we have
\begin{equation}\label{eq:differentiation}
\beta_t(x,y)  = \lim_{r \downarrow 0} \frac{\int_{A_r} \exp_x^{t*} \mis}{\int_{A_r} \exp_x^{1*} \mis} = \frac{(\exp_x^{t*} \mis)(\lambda_0)}{(\exp_x^{1*} \mis)(\lambda_0)}.
\end{equation}
The right hand side of \eqref{eq:differentiation} is the ratio of two smooth tensor densities computed at $\lambda_0$. To compute it, we evaluate both factors on a $n$-tuple of independent vectors of $T_x^*M$. Thus, pick a Darboux frame $E_1(t),\dots,E_n(t),F_1(t),\dots,F_n(t) \in T_{\lambda(t)}(T^*M)$ such that $\pi_* E_i(t) = 0$ and $\pi_* F_i(t) = X_i(t)$ for all $t \in [0,1]$, $i=1,\dots,n$. Then,
\begin{equation}
(\exp_x^{t*} \mis)(E_1(0),\dots,E_n(0)) = \mis(\pi_* \circ e^{t\vec{H}}_* E_1(0),\dots,\pi_* \circ e^{t\vec{H}}_* E_1(0)).
\end{equation}
The $n$-tuple $\J_i(t)=e^{t\vec{H}}_* E_i(0)$, $i=1,\dots,n$, corresponds to the Jacobi matrix $\mathbf{J}^\vers_0(t)$, such that $M^\vers_0(0) = \mathbbold{1}$ and $N^\vers_0(0) = \mathbbold{0}$, defined in Section~\ref{s:sjm}. Thus,
\begin{equation}
(\exp_x^{t*} \mis)(E_1(0),\dots,E_n(0)) = \det N^\vers_0(t)\, \mis(X_1(t),\dots,X_n(t)).
\end{equation}
By replacing the above formula in \eqref{eq:differentiation}, we obtain $\beta_t(x,y)$. Since $\gamma(t)$ is not conjugate to $\gamma(0)$ for all $t \in (0,1]$, we have $\beta_t(x,y)>0$ on that interval. 

Formula~\eqref{eq:secondabeta} for $\beta_{1-t}(y,x)$ is deduced in a similar way and with some additional care, following the geodesic backwards starting from the final point. We sketch the proof for this case. Let $\gamma: [0,1] \to M$ be the unique minimizing geodesic from $x$ to $y$, with extremal $\lambda :[0,1] \to T^*M$. Of course, the unique minimizing geodesic from $y$ to $x$ is $\tilde{\gamma}(t) = \gamma(1-t)$. The corresponding normal extremal is $\tilde{\lambda}(t) = - \lambda(1-t)$. Consider the inversion map $\iota: T^*M \to T^*M$, such that $\iota(\lambda) = -\lambda$.  In particular if $E_i(t),F_i(t)$ are a Darboux frame along $\lambda(t)$, then $\tilde{E}_i(t):= -\iota_* E_i(1-t)$ and $\tilde{F}_i(t):= -\iota_* F_i(1-t)$ are a Darboux frame along $\tilde{\lambda}(t)$. Hence, we have
\begin{equation}
\beta_{1-t}(y,x) = \frac{\mis(\pi_* \circ e^{(1-t)\vec{H}}_* \tilde{E}_1(0),\dots,\pi_* \circ e^{(1-t)\vec{H}}_* \tilde{E}_n(0))}{\mis(\pi_* \circ e^{\vec{H}}_* \tilde{E}_1(0),\dots,\pi_* \circ e^{\vec{H}}_* \tilde{E}_n(0))}, \qquad \forall t \in [0,1].
\end{equation}
We conclude the proof by observing that the $n$-tuple
\begin{equation}
\tilde{\J}_i(t) = e^{(1-t)\vec{H}}_* \tilde{E}_i(0) = e^{(t-1)\vec{H}}_* E_i(1) , \qquad i=1,\dots,n,
\end{equation}
corresponds to the Jacobi matrix $\mathbf{J}_1^\vers(t) = \left(\begin{smallmatrix} M^\vers_1(t) \\ N^\vers_1(t)\end{smallmatrix}\right)$ in terms of $E_1(t),\dots,F_n(t)$.
\end{proof}

\subsection{Interpolation inequalities: proof of Theorem~\ref{t:jacointerpineq}} \label{s:iipt4}
For $\mu_0$-a.e.\ $x \in \mathcal{S}^\psi$, by Theorems~\ref{t:well-posed}-\ref{t:differentiability} we have $T_t(x) = x$ and $\rho_t(x) = \rho_0(x)$ for all $t \in [0,1]$. In this case the inequality follows from Lemma~\ref{l:diagonalcoefficient},  which implies that
\begin{equation}
\beta_{1-t}(x,x)^{1/n} + \beta_t(x,x)^{1/n} \leq (1-t)^{Q(x)/n} + t^{Q(x)/n} \leq 1, \qquad \forall t \in [0,1].
\end{equation}

Fix now $x \in \mathcal{M}^\psi$, such that (i) $\psi : M \to \R$ is twice differentiable, (ii) the Jacobian identity of Theorem~\ref{t:differentiability} holds. By the absolute continuity of $\mu_0$ w.r.t.\ $\mis$, properties (i)-(ii) are satisfied $\mu_0$-a.e.\ in $\mathcal{M}^\psi$. Letting $X_i(t)$ be a moving frame along the geodesic $T_t(x) = \exp_x(td_x\psi(x))$, we have
\begin{equation}\label{eq:mongamperer}
\frac{\rho_0(x)}{\rho_t(T_t(x))} = \det(d_xT_t)\frac{\mis(X_1(t),\ldots,X_n(t))}{\mis(X_1(0),\ldots,X_n(0))} > 0.
\end{equation}
Recall that, according to Theorem~\ref{t:well-posed}, $y = T_1(x)$ if and only if for all $z \in M$ one has $\psi(z) + c(z,y) - \psi(x) - c(x,y) \geq 0$.	One can apply Theorem~\ref{t:FR} with $\phi(z)  := \psi(z)- \psi(x) - c(x,T(x))$ at the point $x$, where $\psi$ is twice differentiable. Hence, Theorem~\ref{p:mainjacest} yields an estimate for the determinant of the linear map
\begin{equation}
d_x T_t = \pi_*\circ e^{t\vec{H}}_* \circ d^2_x \phi = \pi_*\circ e^{t\vec{H}}_* \circ d^2_x \psi : T_x M \to T_{T_t(x)}M,
\end{equation}
given by \eqref{eq:mainjacest} for $s=1$. Since $T(x) \notin \cut(x)$ we can use the expressions for the distortion coefficients $\beta_t(x,T(x))$ of Lemma~\ref{l:distortioncomputed}, and we obtain
\begin{equation}
\frac{\rho_0(x)^{1/n}}{\rho_t(T_t(x))^{1/n}} \geq \beta_{1-t}(y,x)^{1/n} + \beta_{t}(x,y)^{1/n} \left(\det(d_x T_1) \frac{\mis(X_1(1),\dots,X_n(1))}{\mis(X_1(0),\dots,X_n(0))}\right)^{1/n}.
\end{equation}
If $\mu_1 \in \mathcal{P}_c^{ac}(M)$ we can again use \eqref{eq:mongamperer} for $t=1$ to replace the term containing $\det(d_x T_1)$, thus proving Theorem~\ref{t:jacointerpineq} for $t \in [0,1]$. If $\mu_1 \in \mathcal{P}_c(M) \setminus \mathcal{P}_c^{ac}(M)$ then \eqref{eq:mongamperer} holds only for $t \in [0,1)$. In this case, in the first step, we simply omit the second term in \eqref{eq:mainjacest} (which is non-negative). \hfill \qed
\section{Geometric and functional inequalities}\label{s:inequalities}

The first consequence of the interpolation inequalities proved so far is a sub-Riemannian Borell-Brascamp-Lieb inequality, that is Theorem~\ref{t:srbbl}. Its proof follows, without any modification, as in \cite[Sec.\ 6]{CEMS-interpolations}, and makes use of the assumption \eqref{eq:asscems} for triple of points $(x,y,z)$ satisfying $y=T_{1}(x)$ and $z=T_{t}(x)$, for some transport map $T$. This justifies removing $\cut(M)$ from $A\times B$.

\begin{theorem}[Sub-Riemannian Borell-Brascamp-Lieb inequality] \label{t:srbbl}
Let $(\distr,g)$ be an ideal sub-Riemannian structure on a $n$-dimensional manifold $M$, equipped with a smooth measure $\mis$. Fix $t\in [0,1]$. Let $f,g,h:M\to \R$ be non-negative and $A,B\subset M$ Borel subsets such that $\int_{A}f \,d\mis=\int_{B}g \,d\mis=1$.  Assume that for every $(x,y)\in (A\times B)\setminus \cut(M)$  and $z\in Z_{t}(x,y)$,
\begin{equation}\label{eq:asscems}
\frac{1}{h(z)^{1/n}}\leq \left(\frac{\beta_{1-t}(y,x)}{f(x)}\right)^{1/n}+  \left(\frac{\beta_{t}(x,y)}{g(y)}\right)^{1/n}.
\end{equation}
Then $\int_{M} h\, d\mis \geq 1$.
\end{theorem}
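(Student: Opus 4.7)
The plan is to deduce this theorem as a direct consequence of the interpolation inequality of Theorem~\ref{t:jacointerpineq}, exactly as in the Riemannian argument of \cite{CEMS-interpolations}. First, I would set up optimal transport between the probability measures
\begin{equation}
\mu_0 := f\,\mis \in \mathcal{P}_c^{ac}(M), \qquad \mu_1 := g\,\mis \in \mathcal{P}_c^{ac}(M),
\end{equation}
which are supported in $A$ and $B$ respectively by assumption. Applying Theorem~\ref{t:well-posed} and Theorem~\ref{t:abscont}, one obtains the unique Kantorovich potential $\psi$, the interpolating maps $T_t :M \to M$ (with $T=T_1$), and the displacement interpolation $\mu_t := (T_t)_\sharp \mu_0 = \rho_t \mis \in \mathcal{P}_c^{ac}(M)$ for every $t \in [0,1]$.

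The key step is a pointwise comparison between $h \circ T_t$ and $\rho_t \circ T_t$. For $\mu_0$-a.e.\ $x \in \mathcal{M}^\psi$, the proof of Theorem~\ref{t:jacointerpineq} (which uses Theorem~\ref{t:FR}) ensures that $T(x) \notin \cut(x)$, so the pair $(x,T(x))$ lies in $(A \times B)\setminus \cut(M)$, and $T_t(x) \in Z_t(x,T(x))$. Chaining the hypothesis \eqref{eq:asscems} at the triple $(x,T(x),T_t(x))$ with the interpolation inequality, together with $\rho_0(x) = f(x)$ and $\rho_1(T(x)) = g(T(x))$, one obtains
\begin{equation}
\frac{1}{h(T_t(x))^{1/n}} \leq \left(\frac{\beta_{1-t}(T(x),x)}{f(x)}\right)^{1/n} + \left(\frac{\beta_t(x,T(x))}{g(T(x))}\right)^{1/n} \leq \frac{1}{\rho_t(T_t(x))^{1/n}},
\end{equation}
that is, $h(T_t(x)) \geq \rho_t(T_t(x))$ for $\mu_0$-a.e.\ $x \in \mathcal{M}^\psi$. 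On the static set $\mathcal{S}^\psi$, the interpolation is trivial: by Theorem~\ref{t:well-posed} we have $T_t(x) = x$ and $\rho_t(x) = f(x) = g(x)$ for $\mu_0$-a.e.\ $x \in \mathcal{S}^\psi$, and the on-diagonal bound of Lemma~\ref{l:diagonalcoefficient} guarantees that the analogue of \eqref{eq:asscems} can be obtained by a limiting procedure (approximating $y \to x$ transversely to the cut locus, exploiting the continuity of $\beta_t(x,y)$ off the cut locus and Lemma~\ref{l:distortioncomputed}), yielding again $h(x) \geq \rho_t(x)$ $\mu_0$-a.e.\ on $\mathcal{S}^\psi$.

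Given the pointwise inequality $h \circ T_t \geq \rho_t \circ T_t$, $\mu_0$-a.e., pushing forward by $T_t$ gives $h \geq \rho_t$ $\mu_t$-a.e., and hence
\begin{equation}
\int_M h\,d\mis \geq \int_{\supp(\mu_t)} h\,d\mis \geq \int_{\supp(\mu_t)} \rho_t\,d\mis = \int_M \rho_t\,d\mis = 1,
\end{equation}
which is the desired conclusion. The main obstacle, as suggested by the remark in the paper that the proof follows \cite[Sec.\ 6]{CEMS-interpolations} without modification, is purely bookkeeping: one has to ensure that the hypothesis \eqref{eq:asscems} is truly used only at pairs $(x,T(x)) \notin \cut(M)$, which is precisely what the argument of Theorem~\ref{t:jacointerpineq} guarantees (via Theorem~\ref{t:FR}), and to verify that the contribution of $\mathcal{S}^\psi$ to the integral is handled correctly, which is the one place where the sub-Riemannian nature of the diagonal could be an issue but where the triviality $T_t|_{\mathcal{S}^\psi} = \mathrm{id}$ saves the day.
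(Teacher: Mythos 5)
Your overall strategy is exactly the one the paper intends: transport $\mu_0=f\,\mis$ to $\mu_1=g\,\mis$, chain the interpolation inequality of Theorem~\ref{t:jacointerpineq} with hypothesis~\eqref{eq:asscems} at the triple $(x,T(x),T_t(x))$, and integrate. On the moving set $\mathcal{M}^\psi$ this works as you describe, since there $(x,T(x))\notin\cut(M)$ for $\mu_0$-a.e.\ $x$ and $T_t(x)\in Z_t(x,T(x))$.

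The gap is in your treatment of the static set $\mathcal{S}^\psi$. For $x\in\mathcal{S}^\psi$ one has $T(x)=x$, and in a genuinely sub-Riemannian structure $(x,x)\in\cut(M)$ always (the squared distance is never smooth at the diagonal, cf.\ Remark~\ref{r:semisemi}), so \eqref{eq:asscems} gives no information at $(x,x,x)$. Your proposed limiting procedure, letting $y\to x$ with $y\notin\cut(x)$, cannot close this: $f,g,h$ are only measurable, so neither $g(y)\to g(x)$ nor $h(z)\to h(x)$ for $z\in Z_t(x,y)$ is available, and the distortion coefficients do not extend continuously to the diagonal in general. What one actually needs on $\mathcal{S}^\psi$ is the pointwise bound $h(x)\geq\rho_t(x)=f(x)$; this \emph{does} follow if the hypothesis is assumed at $(x,x,x)$, since then Lemma~\ref{l:diagonalcoefficient} gives $\beta_s(x,x)\leq s^{Q(x)}$ with $Q(x)\geq n$ and hence $\beta_{1-t}(x,x)^{1/n}+\beta_t(x,x)^{1/n}\leq 1$, but that is exactly the pair the statement has excised. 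The paper's remark, that \eqref{eq:asscems} is used only at the triples $(x,T_1(x),T_t(x))$, correctly justifies removing $\cut(M)$ only on $\mathcal{M}^\psi$; on $\mathcal{S}^\psi$ those very triples lie inside $\cut(M)$, so your instinct that something extra is needed there is right, but the limiting argument is not the fix.
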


Let $t \in [0,1]$ and $a,b \geq 0$. We introduce the $p$-mean for $p \in \R \cup \{\pm \infty\}$. When $p \neq 0, \pm \infty$ then
\begin{equation}\label{eq:pmean}
\mathcal{M}_{t}^{p}(a,b):=
\begin{cases}
\left( (1-t)a^{p}+t b^{p} \right)^{1/p}& \text{if } ab\neq 0,\\
0& \text{if } ab= 0.
\end{cases}
\end{equation}
The limit cases are defined as follows
\begin{equation}
 \mathcal{M}_{t}^{0}(a,b):=a^{1-t}b^{t},
\quad \mathcal{M}_{t}^{+\infty}(a,b):=\max\{a,b\}, \quad \mathcal{M}_{t}^{-\infty}(a,b):=\min\{a,b\}.
\end{equation}
The next result follows in a standard way from Theorem~\ref{t:srbbl}, by elementary properties of $\mathcal{M}_t^p$. Theorem~\ref{t:srbbl} can be recovered from Theorem~\ref{t:srpmean} by setting $p=-1/n$. The case $p=0$ is the so-called Pr\'ekopa-Leindler inequality.
\begin{theorem}[Sub-Riemannian $p$-mean inequality] \label{t:srpmean}
Let $(\distr,g)$ be an ideal sub-Riemannian structure on a $n$-dimensional manifold $M$, equipped with a smooth measure $\mis$. Fix $p\geq -1/n$ and $t\in [0,1]$. Let $f,g,h:M\to \R$ be non-negative and $A,B\subset M$ be Borel subsets such that $\int_{A}f \,d\mis=\|f\|_{L^{1}(M)}$ and  $\int_{B}g \,d\mis=\|g\|_{L^{1}(M)}$.  Assume that for every $(x,y)\in (A\times B)\setminus \cut(M)$  and $z\in Z_{t}(x,y)$,
\begin{equation}
h(z)\geq \mathcal{M}_{t}^{p}\left(\frac{(1-t)^{n}f(x)}{\beta_{1-t}(y,x)},\frac{t^{n}g(y)}{\beta_{t}(x,y)}\right). 
\end{equation}
Then, 
\begin{equation}
\int_{M} h\, d\mis \geq \mathcal{M}_{t}^{p/(1+np)}\left(\int_{M} f\, d\mis, \int_{M} g\, d\mis\right),
\end{equation}
with the convention that if $p=+\infty$ then $p/(1+np) = 1/n$, and if $p=-1/n$ then $p/(1+np) = -\infty$.
\end{theorem}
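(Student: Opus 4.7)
The plan is to reduce Theorem~\ref{t:srpmean} to the sub-Riemannian Borell--Brascamp--Lieb inequality (Theorem~\ref{t:srbbl}, the case $p=-1/n$) by a normalisation and rescaling argument, in the spirit of \cite{CEMS-interpolations}. If either $I_f := \|f\|_{L^1(M)}$ or $I_g := \|g\|_{L^1(M)}$ vanishes, the right-hand side of the conclusion equals zero by the convention $\mathcal{M}_t^q(a,b)=0$ when $ab=0$, so there is nothing to prove. Assume henceforth $I_f,I_g>0$, set $\tilde p := p/(1+np)$ and $J := \mathcal{M}_t^{\tilde p}(I_f,I_g)$, and introduce the renormalised functions $\tilde f := f/I_f$, $\tilde g := g/I_g$, $\tilde h := h/J$, so that $\int_A \tilde f\,d\mis = \int_B \tilde g\,d\mis = 1$.

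The goal is to verify the hypothesis of Theorem~\ref{t:srbbl} for the triple $(\tilde f,\tilde g,\tilde h)$: for every $(x,y)\in (A\times B)\setminus\cut(M)$ and every $z\in Z_t(x,y)$,
\begin{equation*}
\tilde h(z)^{-1/n} \leq \left(\frac{\beta_{1-t}(y,x)}{\tilde f(x)}\right)^{\!1/n} + \left(\frac{\beta_t(x,y)}{\tilde g(y)}\right)^{\!1/n}.
\end{equation*}
Setting $a := (1-t)^n f(x)/\beta_{1-t}(y,x)$ and $b := t^n g(y)/\beta_t(x,y)$, the hypothesis of Theorem~\ref{t:srpmean} reads $h(z)\geq \mathcal{M}_t^p(a,b)$, while a direct substitution gives $(\beta_{1-t}(y,x)/\tilde f(x))^{1/n} = (1-t)\,I_f^{1/n}\,a^{-1/n}$ and $(\beta_t(x,y)/\tilde g(y))^{1/n} = t\,I_g^{1/n}\,b^{-1/n}$, whose sum is precisely $\mathcal{M}_t^{1/n}(I_f/a,\,I_g/b)^{1/n}$. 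Therefore, after rearranging and raising to the $n$-th power, the required BBL condition reduces to the pointwise numerical inequality
\begin{equation*}
\mathcal{M}_t^{\tilde p}(I_f,I_g) \ \leq\ \mathcal{M}_t^{p}(a,b)\cdot \mathcal{M}_t^{1/n}(I_f/a,\,I_g/b).
\end{equation*}

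This last inequality is the classical Hölder-type inequality for weighted $p$-means: whenever $1/r' = 1/p' + 1/q'$, one has $\mathcal{M}_t^{r'}(u_1v_1,u_2v_2) \leq \mathcal{M}_t^{p'}(u_1,u_2)\,\mathcal{M}_t^{q'}(v_1,v_2)$ in the appropriate sign regime. With the choice $p'=p$, $q'=1/n$, $r'=\tilde p$, the relation $1/\tilde p = 1/p + n$ matches; the assumption $p\geq -1/n$ is precisely what forces $1/r'\geq 0$ and the correct direction of the inequality (which follows from the ordinary Hölder inequality applied on the two-point probability space with weights $1-t$ and $t$). The boundary cases $p=-1/n$ (where $\tilde p = -\infty$, so the target statement reduces to Theorem~\ref{t:srbbl} itself) and $p=+\infty$ (where $\tilde p = 1/n$) are recovered by continuity in $p$.

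Once the pointwise estimate is established, Theorem~\ref{t:srbbl} applied to $(\tilde f,\tilde g,\tilde h)$ yields $\int_M \tilde h\,d\mis \geq 1$, equivalently $\int_M h\,d\mis \geq J = \mathcal{M}_t^{\tilde p}(I_f,I_g)$, which is the conclusion. The main obstacle is not geometric but combinatorial: one must track the numerous degenerate cases of $\mathcal{M}_t^p$ (for $p\in\{-1/n,0,+\infty\}$) and of inputs where $a$ or $b$ vanishes; in each such instance, however, the Hölder-type mean inequality collapses either to the AM--GM inequality, to ordinary Hölder, or to a direct $\min/\max$ argument, and no sub-Riemannian input beyond Theorem~\ref{t:srbbl} is required for this final step.
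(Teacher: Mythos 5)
Your proposal is correct and takes exactly the route the paper intends: the paper gives no written proof of Theorem~\ref{t:srpmean}, merely remarking that it ``follows in a standard way from Theorem~\ref{t:srbbl}, by elementary properties of $\mathcal{M}_t^p$,'' and your normalisation $(\tilde f,\tilde g,\tilde h)=(f/I_f,\,g/I_g,\,h/\mathcal{M}_t^{\tilde p}(I_f,I_g))$ followed by the H\"older-type inequality $\mathcal{M}_t^{p}(a,b)\,\mathcal{M}_t^{1/n}(c,d)\geq\mathcal{M}_t^{p/(1+np)}(ac,bd)$ (valid precisely when $p+\tfrac1n\geq 0$) is that standard reduction. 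One small imprecision in your write-up: the hypothesis $p\geq-1/n$ does not force $1/r'=1/p+n\geq 0$ (for $p\in(-1/n,0)$ one has $1/r'<0$); the correct reason it is the right threshold is that it is equivalent to $p'+q'=p+1/n\geq 0$, the standing assumption of the weighted-mean H\"older inequality.
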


\subsection{Brunn-Minkowski inequality: proof of Theorem~\ref{t:bmsr-intro}}

For $t=0$ or $t=1$, inequality \eqref{eq:bmsr-intro} is trivially verified. Hence let $t \in (0,1)$. Assume first that $Z_{t}(A,B)$ is measurable, and set 
\begin{equation}\label{eq:scelta}
f=\frac{\beta_{1-t}(B,A)}{(1-t)^{n}}\chi_{A},\qquad 
g=\frac{\beta_{t}(A,B)}{t^{n}}\chi_{B},\qquad h=\chi_{Z_{t}(A,B)},
\end{equation}
where $\chi_S$ is the characteristic function of a set $S \subset M$ and $\beta_{t}(A,B)$ is defined in \eqref{eq:betatAB}. The assumption in Theorem \ref{t:srpmean} is satisfied with $p=+\infty$ since
for every $(x,y)\in (A\times B)\setminus \cut(M)$  and $z\in Z_{t}(x,y)$,
\begin{equation}
1=h(z)\geq \max \left\{\frac{(1-t)^{n}f(x)}{\beta_{1-t}(y,x)},\frac{t^{n}g(y)}{\beta_{t}(x,y)}\right\}=\max \left\{\frac{\beta_{1-t}(B,A)}{\beta_{1-t}(y,x)},\frac{\beta_{t}(A,B)}{\beta_{t}(x,y)}\right\}. 
\end{equation}
Then, we have (when $p=+\infty$ it is understood that $p/(1+np)=1/n$)
\begin{align*}
\mis(Z_{t}(A,B))=\int_{M} h\, d\mis &\geq \mathcal{M}_{t}^{1/n}\left(\int_{M} f\, d\mis, \int_{M} g\, d\mis\right)\\
&= \left(\beta_{1-t}(B,A)^{1/n} \mis(A)^{1/n}+ \beta_{t}(A,B)^{1/n} \mis(B)^{1/n}\right)^{n},
\end{align*}
which proves the required inequality.

Assume now that $Z_{t}(A,B)$ is not measurable.  
Since $\mis$ is Borel regular, there exists a measurable set $C$ such that $Z_{t}(A,B)\subset C$, with $\mis(Z_{t}(A,B))=\mis(C)$. We have clearly that $\chi_{C}\geq \chi_{Z_{t}(A,B)}$ and $\chi_{C}$ is measurable. The conclusion follows repeating the argument above replacing  $h_{Z_t(A,B)}$ with $h_C$ in \eqref{eq:scelta}.\hfill \qed

\subsection{Equivalence of inequalities: proof of Theorem \ref{t:equivalenza}}
Let $(\distr,g)$ be an ideal sub-Rieman\-nian structure on a $n$-dimensional manifold $M$, equipped with a smooth measure $\mis$, and $N \geq 1$. We prove that (i) $\Rightarrow$ (ii) $\Rightarrow$ (iii) $\Rightarrow$ (i). First, by plugging (i) in the result of Theorem~\ref{t:bmsr-intro} we obtain (ii). Furthermore, (iii) is a particular case of (ii) by considering only sets of the form $A=\{x\}$. Finally, (iii) implies (i) by choosing in the former $B = \mathcal{B}_r(y)$, and recalling Definition \ref{d:introdist} of $\beta_t(x,y)$. \hfill \qed



\section{Examples}\label{s:esempi}

In this section we discuss the distortion coefficients for some examples. The first one is the Heisenberg group.
 
\subsection{Heisenberg group}\label{s:heis}

The Heisenberg group $\mathbb{H}_{3}$ is the sub-Riemannian structure on $M= \R^3$ defined by the global generating vector fields
\begin{equation}
X_1 = \partial_x-\frac{y}{2}\partial_{z}, \qquad X_2 =  \partial_y+\frac{x}{2}\partial_{z}.
\end{equation}
The distribution has constant rank equal to two, and the sub-Riemannian structure is left-invariant with respect to the group product
\begin{equation}
(x,y,z)\star(x',y',z')=\left(x+x',y+y',z+z'+\frac12(xy'-y x')\right).
\end{equation}
The Heisenberg group is hence a Lie group and we equip it with the Lebesgue measure $\mis = \mathcal{L}^3$, which is a Haar measure. Thanks to the left-invariance of the sub-Riemannian structure, it is enough to compute the distortion coefficients when one of the two points is the origin.

In dual coordinates $(u,v,w,x,y,z)$ on $T^* \R^3$, the corresponding Hamiltonian is 
\begin{equation}
H(u,v,w,x,y,z) = \frac{1}{2}\left(\left(u - \frac{y}{2} w\right)^2 + \left(v+\frac{x}{2} w\right)^2\right).
\end{equation}
Hamilton equations can be explicitly integrated. In particular for an initial covector $\lambda_0 =u_0 dx + v_0 dy+w_{0}dz  \in T_{0}^*\R^3$, the exponential map from the origin $\exp^t_0(\lambda_{0}):= \exp_0(t\lambda_{0})$ reads $\exp_{0}^t(u_0,v_0,w_{0}) = (x(t),y(t),z(t))$, where
\begin{align}
x(t) & = \frac{u_0 \sin(w_0 t)+  v_{0} (\cos(w_0 t) -1)}{w_{0}},  \\
y(t) & = \frac{u_0(1- \cos(w_0 t)) + v_{0} \sin(w_0 t)}{w_{0}}, \\
z(t) & = (u_{0}^{2}+v_{0}^{2})\frac{w_{0}t-\sin (w_{0}t )}{2w_{0}^2}.
\end{align}
\begin{remark}
The geodesic flow is an analytic function of the initial data, and if $w_0=0$ the above equations are understood by taking the limit $w_0 \to 0$. We always adopt this convention in this section.
\end{remark}
In order to use Lemma~\ref{l:distortioncomputed} for the computation of the distortion coefficient, we choose the global Darboux frame induced by the global sections of $T(T^*\R^3)$:
\begin{equation}
E_1 = \partial_{u}, \quad E_2= \partial_{v}, \quad E_3= \partial_{w}, \quad F_1 = \partial_{x}, \quad F_2 = \partial_y,\quad F_3= \partial_{z}.
\end{equation}
In particular, the horizontal part of the Jacobi matrix $N^\vers_0(t)$ is simply the Jacobian of the exponential map $(u,v,w) \mapsto \exp^t_{0}(u,v,w)$ computed at $(u_0,v_0,w_{0})$ in these coordinates. A straightforward computation and Lemma~\ref{l:distortioncomputed} yield the following.
\begin{prop}[Heisenberg distortion coefficient]
Let $q \notin \cut(0)$. Then
\begin{equation}\label{eq:coeffheis}
\beta_t(0,q) = 
t \frac{\sin\left(\tfrac{t w_0}{2}\right)}{\sin\left(\tfrac{w_0}{2}\right) }\frac{\sin\left(\tfrac{t w_0}{2}\right) - \tfrac{t w_0}{2} \cos\left(\tfrac{t w_0}{2}\right)}{\sin\left(\tfrac{w_0}{2}\right) - \tfrac{w_0}{2} \cos\left(\tfrac{w_0}{2}\right)}
, \qquad \forall t \in [0,1],
\end{equation}
where $(u_0,v_0,w_0)$ is the initial covector of the unique geodesic joining $0$ with $q$.
\end{prop}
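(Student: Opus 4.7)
The natural tool is Lemma~\ref{l:distortioncomputed}, specialized with $x=0$, $y=q$, and the smooth global frame $X_1=\partial_x$, $X_2=\partial_y$, $X_3=\partial_z$ along the unique minimizing geodesic from $0$ to $q$. I would first verify that the vector fields $E_1=\partial_u$, $E_2=\partial_v$, $E_3=\partial_w$ together with $F_1=\partial_x$, $F_2=\partial_y$, $F_3=\partial_z$ form a Darboux lift of $(X_1,X_2,X_3)$: this is immediate since the canonical symplectic form on $T^*\R^3$ is $\sigma=du\wedge dx+dv\wedge dy+dw\wedge dz$, and $\pi_*E_i=0$, $\pi_*F_i=X_i$ for $i=1,2,3$. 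Since the frame $X_i$ does not depend on $t$ and $\mis=\mathcal{L}^3$, the measure ratio appearing in \eqref{eq:primabeta} is identically $1$, so
\begin{equation}
\beta_t(0,q) \;=\; \frac{\det N^\vers_0(t)}{\det N^\vers_0(1)}.
\end{equation}

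Next, I would identify $N^\vers_0(t)$ with the Jacobian matrix of the time-$t$ exponential map from the origin in the coordinates $(u_0,v_0,w_0)$. Indeed, the Jacobi matrix $\mathbf{J}^\vers_0(t)$ is obtained by flowing the vertical vectors $E_i|_{\lambda_0}=\partial_{u_i}$ under $e^{t\vec H}_*$; its horizontal component, read against the frame $F_j=\partial_{x_j}$, is by construction the matrix $\partial\exp_0^t/\partial(u_0,v_0,w_0)$ evaluated at $(u_0,v_0,w_0)$. The required entries are immediate from the explicit integration of Hamilton's equations given just before the statement.

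The main computational step, and the only real obstacle, is the explicit evaluation of the $3\times 3$ determinant
\begin{equation}
\det N^\vers_0(t) \;=\; \det\!\begin{pmatrix} \partial_{u_0}x(t) & \partial_{v_0}x(t) & \partial_{w_0}x(t) \\ \partial_{u_0}y(t) & \partial_{v_0}y(t) & \partial_{w_0}y(t) \\ \partial_{u_0}z(t) & \partial_{v_0}z(t) & \partial_{w_0}z(t) \end{pmatrix}
\end{equation}
followed by a trigonometric simplification. I would proceed by expanding along the third row. The $(3,3)$ cofactor is the Jacobian of the $(x,y)$-components in $(u_0,v_0)$, which equals $\frac{\sin^{2}(w_0t)+(1-\cos(w_0t))^{2}}{w_0^{2}}=\frac{4\sin^2(w_0t/2)}{w_0^2}$ by the half-angle identity. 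Using rotational invariance in the $(u_0,v_0)$-plane (equivalently, substituting $u_0=r\cos\theta_0$, $v_0=r\sin\theta_0$ and observing that $w_0$-derivatives commute with the rotation), one may as well set $v_0=0$, $u_0=r$, which streamlines the remaining two cofactors. After collecting terms and factoring an overall $r^2/w_0^4$, one finds that the determinant is proportional to
\begin{equation}
t\,\sin\!\bigl(\tfrac{w_0t}{2}\bigr)\,\Bigl[\sin\!\bigl(\tfrac{w_0t}{2}\bigr)-\tfrac{w_0t}{2}\cos\!\bigl(\tfrac{w_0t}{2}\bigr)\Bigr],
\end{equation}
with a prefactor depending only on $r$ and $w_0$ (but not on $t$). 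Dividing by the same expression evaluated at $t=1$, the prefactor cancels and one recovers exactly the stated formula~\eqref{eq:coeffheis}.

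I expect the trigonometric bookkeeping in the determinant expansion, especially the $w_0$-derivatives $\partial_{w_0}x$, $\partial_{w_0}y$, $\partial_{w_0}z$ which generate mixed terms of the form $w_0t\cos(w_0t)$ and $\sin(w_0t)$, to be the only nontrivial point; using the half-angle formulas $1-\cos\alpha=2\sin^2(\alpha/2)$ and $\sin\alpha-\alpha\cos\alpha=2\sin(\alpha/2)[\sin(\alpha/2)-(\alpha/2)\cos(\alpha/2)]$ is what makes the prefactor separate cleanly and the final ratio collapse to the compact form displayed in the proposition.
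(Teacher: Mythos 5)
Your proposal is correct and follows essentially the same route as the paper: apply Lemma~\ref{l:distortioncomputed} with the global Darboux frame $E_i = \partial_{u_i}$, $F_i = \partial_{x_i}$, so that $\beta_t(0,q) = \det N^\vers_0(t)/\det N^\vers_0(1)$ with $N^\vers_0(t)$ the Jacobian of $\exp^t_0$, and then carry out the trigonometric determinant computation (which the paper labels ``straightforward''). Your use of rotational invariance to set $v_0=0$ and the half-angle identities are exactly the right simplifications, and the computation indeed yields a $t$-dependent factor $t\sin(w_0t/2)\,[\sin(w_0t/2)-(w_0t/2)\cos(w_0t/2)]$ with a $t$-independent prefactor $4r^2/w_0^4$ that cancels in the ratio.
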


For the Heisenberg group, it is well-known that $t_{\mathrm{cut}}(u_0,v_0,w_{0}) = 2\pi/|w_0|$ (see e.g.\ \cite[Lemma 37]{ABB-Hausdorff}). Hence, since $q \notin \cut(0)$, in the above formula it is understood that $|w_0| < 2\pi$, in which case one can check that $\beta_t(0,q)>0$ for all $t \in (0,1]$.
\begin{rmk}\label{rmk:nodist}
In the above notation, $d^2_{SR}(0,q) = \|\lambda\|^2 = u_0^2 + v_0^2$. We observe that the Heisenberg distortion coefficient \emph{does not} depend on the distance $d_{SR}(0,q)$, but rather on the ``vertical part'' $w_0$ of the covector $\lambda$. See Section~\ref{s:distortionproperties}.
\end{rmk}
The following lemma is a consequence of the inequalities of \cite[Lemma 18]{R-MCPcorank1}.
\begin{lemma}
[Sharp bound to Heisenberg distortion]
\label{l:srhhh}
Let $N  \in \R$. The inequality
\begin{equation}
\beta_t(q_0,q) \geq t^N, \qquad \forall t \in [0,1],
\end{equation}
holds for all points $q_0, q \in \mathbb{H}_3$ with $q \notin \cut(q_0)$, if and only if $N \geq 5$.
\end{lemma}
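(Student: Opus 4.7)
By left-invariance of the Heisenberg structure and the measure $\mathcal{L}^3$, it suffices to treat the case $q_0 = 0$, so that \eqref{eq:coeffheis} applies. Setting $s = w_0/2 \in (-\pi,\pi)$ (with $|s|<\pi$ since $q \notin \cut(0)$, as $t_{\mathrm{cut}} = 2\pi/|w_0|$), and noting that both factors of \eqref{eq:coeffheis} are even in $w_0$, I may assume $s \in (0,\pi)$. The plan is to factor the ratio $\beta_t(0,q)/t^5$ into two pieces, each of which I control by an elementary one-variable monotonicity argument.

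Concretely, I will show the two inequalities
\begin{equation}
\frac{\sin(ts)}{\sin(s)} \geq t, \qquad \frac{\sin(ts) - ts\cos(ts)}{\sin(s) - s\cos(s)} \geq t^3, \qquad t \in [0,1],\ s \in (0,\pi),
\end{equation}
and multiply them together with the leading $t$ to get $\beta_t(0,q) \geq t^5$. The first inequality is equivalent to the well-known fact that $s \mapsto \sin(s)/s$ is non-increasing on $(0,\pi)$ (this is the standard Riemannian-type distortion bound). For the second inequality, define
\begin{equation}
h(s) := \frac{\sin(s) - s\cos(s)}{s^3},
\end{equation}
which extends smoothly to $s=0$ with $h(0)=1/3$. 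Then the second inequality is equivalent to $h(ts) \geq h(s)$, so it suffices to show that $h$ is non-increasing on $(0,\pi)$. A direct computation gives
\begin{equation}
s^4 h'(s) = (s^2-3)\sin(s) + 3s\cos(s) =: \eta(s).
\end{equation}
One checks $\eta(0)=0$ and $\eta'(s) = s(s\cos s - \sin s)$; since $(\sin s - s\cos s)' = s \sin s \geq 0$ on $[0,\pi)$ and vanishes at $0$, we get $s\cos s - \sin s \leq 0$, hence $\eta' \leq 0$, and therefore $\eta \leq 0$ on $(0,\pi)$. This gives $h'(s)\leq 0$ as required.

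For the optimality of the exponent $N=5$, I will Taylor expand \eqref{eq:coeffheis} as $w_0 \to 0$ (keeping $t \in (0,1)$ fixed): using $\sin(x)/x \to 1$ and $(\sin x - x\cos x)/x^3 \to 1/3$, one obtains
\begin{equation}
\lim_{w_0 \to 0} \beta_t(0,q_{w_0}) = t \cdot t \cdot t^3 = t^5.
\end{equation}
Hence if $N<5$ and $t \in (0,1)$, then $t^N > t^5 = \lim_{w_0\to 0}\beta_t(0,q_{w_0})$, so the inequality $\beta_t(0,q) \geq t^N$ must fail for $q$ sufficiently close to the origin along a geodesic with small $w_0$.

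The main technical obstacle is the monotonicity of $h$ on $(0,\pi)$; once the key computation $\eta'(s) = s(s\cos s - \sin s)$ is performed, everything reduces to classical inequalities between $\sin$ and its tangent line. The splitting into the product of two ratios, each governed by a simple one-variable monotonicity, is the conceptually cleanest presentation, and matches the ``Riemannian part'' ($t$-factor) plus ``sub-Riemannian vertical correction'' ($t^3$-factor) decomposition that one expects from the asymptotics of Theorem~\ref{t:asymptotics-intro}, reflecting the geodesic dimension $\mathcal{N} = 5$ of $\mathbb{H}_3$.
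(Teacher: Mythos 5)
Your proof is correct. For comparison: the paper itself does not give a direct proof of this lemma at all --- it simply invokes left-invariance and then cites the sharp inequalities of \cite[Lemma~18]{R-MCPcorank1}. Your argument supplies a self-contained proof of what the paper outsources. The splitting $\beta_t(0,q) = t \cdot \frac{\sin(ts)}{\sin s} \cdot \frac{\sin(ts)-ts\cos(ts)}{\sin s - s\cos s}$ with $s=w_0/2$, followed by showing each factor dominates $t$ and $t^3$ respectively via the monotonicity of $\sin(s)/s$ and of $h(s)=(\sin s - s\cos s)/s^3$, is clean and all the calculus checks out: $s^4 h'(s)=\eta(s)$, $\eta(0)=0$, $\eta'(s)=s(s\cos s-\sin s)\leq 0$ on $(0,\pi)$, so $h'\leq 0$. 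The sharpness is also handled correctly; you can in fact take $w_0 = 0$ directly (a horizontal straight line never enters $\cut(0)$, and the formula degenerates to $\beta_t = t^5$ exactly), which makes the argument even cleaner than the limiting one. One small phrasing quibble: letting $w_0 \to 0$ with $(u_0,v_0)$ fixed does not make $q$ approach the origin --- it makes the geodesic approach a Riemannian straight line in the horizontal plane --- so ``for $q$ sufficiently close to the origin'' should read ``for $q$ reached by a geodesic with $w_0$ sufficiently small.'' This does not affect correctness. Conceptually, the $1+1+3=5$ factorization you exhibit is exactly the accounting of the geodesic dimension $\mathcal{N}(\mathbb{H}_3)=5$ that the paper discusses elsewhere, so your presentation is both complete and explanatory where the paper is merely a pointer.
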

 One can verify that for $q\in \cut(q_0)$ one has $\beta_t(q_0,q)= +\infty$ for every $t\in (0,1)$. For a  proof see \cite[Cor.~2.1]{BKS-geomheis}. We recover the following  results from \cite{BKS-geomheis}.
\begin{cor}
Let $\mu_0 \in \mathcal{P}^{ac}_c(\mathbb H_3)$, and $\mu_1 \in \mathcal{P}_c(\mathbb H_3)$. Let $\mu_t = (T_t)_\sharp \mu_0 = \rho_t \mathcal{L}^3$ be the unique Wasserstein geodesic joining $\mu_0$ with $\mu_1$. Then,
\begin{equation}
\frac{1}{\rho_t(T_t(x))^{1/3}} \geq \frac{(1-t)^{5/3}}{\rho_0(x)^{1/3}} + \frac{t^{5/3}}{\rho_1(T(x))^{1/3}}, \qquad \mathcal{L}^3-\mathrm{a.e.},\, \forall\, t \in [0,1].
\end{equation}
The above inequality is sharp, in the sense that if one replaces the exponent $5$ with a smaller one, the inequality fails for some choice of $\mu_0,\mu_1$.
\end{cor}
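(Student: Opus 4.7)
The plan is to combine the general interpolation inequality of Theorem~\ref{t:jacointerpineq} with the sharp pointwise distortion bound for $\mathbb H_3$ provided by Lemma~\ref{l:srhhh}, and then use the equivalence in Theorem~\ref{t:equivalenza} to extract the sharpness statement.

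First I would check that $\mathbb H_3$ is ideal: the distribution is fat, since $[X_1,X_2]=\partial_z$ completes the tangent space at every point, so no non-trivial abnormal minimizer exists by \cite[Sec.\ 5.6]{montgomerybook}; the metric space is complete by left-invariance. Hence Theorem~\ref{t:jacointerpineq} applies and gives, for $\mu_0$-a.e.\ $x$, the generic interpolation inequality
\begin{equation}
\frac{1}{\rho_t(T_t(x))^{1/3}} \geq \frac{\beta_{1-t}(T(x),x)^{1/3}}{\rho_0(x)^{1/3}} + \frac{\beta_t(x,T(x))^{1/3}}{\rho_1(T(x))^{1/3}},
\end{equation}
which is valid for every $t\in[0,1]$ when $\mu_1$ is absolutely continuous, and for $t\in[0,1)$ (omitting the last term) otherwise.

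Next I would invoke Theorem~\ref{t:well-posed}, which guarantees that for $\mu_0$-a.e.\ $x$ the points $x$ and $T(x)$ are joined by a unique non-abnormal minimizer, so that $(x,T(x))\notin\cut(\mathbb H_3)$. Lemma~\ref{l:srhhh} then yields the pointwise bounds $\beta_{1-t}(T(x),x)\geq (1-t)^5$ and $\beta_t(x,T(x))\geq t^5$; substituting them into the generic inequality produces the claimed estimate.

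For the sharpness, I would proceed by contradiction: assume the inequality held with some exponent $N'<5$ in place of $5$ for every admissible pair $\mu_0,\mu_1$. Specializing to $\mu_i = \mathcal{L}^3(A_i)^{-1}\chi_{A_i}\mathcal{L}^3$ for Borel sets $A_0,A_1$ with positive and finite Lebesgue measure, and arguing as in the proof of Theorem~\ref{t:bmsr} (equivalently, as the $p=+\infty$ case of Theorem~\ref{t:srpmean}), one would deduce the geodesic Brunn-Minkowski inequality with constant exponent $N'$; by Theorem~\ref{t:equivalenza} this would imply $\beta_t(q_0,q)\geq t^{N'}$ on all non-cut pairs, contradicting the sharpness half of Lemma~\ref{l:srhhh}. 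The single delicate point of the whole argument is this last reduction from an interpolation inequality to a pointwise distortion estimate, but it is already contained in the equivalence $(ii)\Leftrightarrow(i)$ of Theorem~\ref{t:equivalenza}; an independent check is furnished by the explicit formula \eqref{eq:coeffheis}, whose ratio $\beta_t(0,q)/t^5$ can be driven to $1$ by letting $w_0\uparrow 2\pi$, confirming that $5$ is optimal.
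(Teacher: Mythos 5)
Your proposal follows essentially the same route the paper has in mind: the corollary is a direct combination of Theorem~\ref{t:jacointerpineq} applied on the ideal structure $\mathbb{H}_3$ (fat, hence no non-trivial abnormal minimizers, and complete by left-invariance) with the sharp pointwise distortion bound of Lemma~\ref{l:srhhh}, and the reduction of sharpness to the pointwise sharpness in Lemma~\ref{l:srhhh} via the equivalence in Theorem~\ref{t:equivalenza}. The main body of the argument is fine.

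There is, however, one concrete error in your closing "independent check": the ratio $\beta_t(0,q)/t^5$ in \eqref{eq:coeffheis} does \emph{not} tend to $1$ as $w_0\uparrow 2\pi$. In that limit $\sin(w_0/2)\to 0$ while $\sin(w_0/2)-\tfrac{w_0}{2}\cos(w_0/2)\to\pi$, so $\beta_t(0,q)\to+\infty$ for fixed $t\in(0,1)$; this is the conjugate-point blow-up, which is harmless for the lower bound. The saturating limit is the opposite one, $w_0\to 0$: expanding the formula in $u=w_0/2$ one finds $\sin(tu)/\sin(u)\to t$ and $\bigl(\sin(tu)-tu\cos(tu)\bigr)/\bigl(\sin(u)-u\cos(u)\bigr)\to t^3$, so $\beta_t(0,q)\to t\cdot t\cdot t^3 = t^5$. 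Thus for any $N'<5$ and $t\in(0,1)$ one has $\beta_t(0,q)<t^{N'}$ once $w_0$ is small enough, which is what delivers sharpness.

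A smaller stylistic point: "arguing as in the proof of Theorem~\ref{t:bmsr}" is slightly loose, since that proof feeds the \emph{actual} distortion coefficients into the $p$-mean inequality of Theorem~\ref{t:srpmean}. To go from a hypothetical power-law interpolation inequality with exponent $N'$ to $\mathrm{BM}(0,N')$ you would rerun the BBL/$p$-mean chain with the coefficients replaced by $(1-t)^{N'}$ and $t^{N'}$, or, more directly, take $\mu_1=\delta_y$ to extract $\mathrm{MCP}(0,N')$ and then invoke Theorem~\ref{t:equivalenza}~(iii)$\Rightarrow$(i). Either way the conclusion is the same, but the shortcut via $\mathrm{MCP}$ avoids any ambiguity.
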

\begin{cor}
For all non-empty Borel sets $A,B \subset \mathbb H_3$, we have 
\begin{equation}
\mathcal{L}^3(Z_t(A,B))^{1/3} \geq (1-t)^{5/3} \mathcal{L}^3(A)^{1/3} + t^{5/3} \mathcal{L}^3(B)^{1/3}, \qquad \forall\, t \in [0,1].
\end{equation}
The above inequality is sharp, in the sense that if one replaces the exponent $5$ with a smaller one, the inequality fails for some choice of $A,B$.
\end{cor}
Notice that, as a consequence of Theorem \ref{t:equivalenza}, we recover also  the following result originally obtained in \cite{Juillet}: the Heisenberg group $\mathbb{H}_3$, equipped with the Lebesgue measure, satisfies the $\mathrm{MCP}(K,N)$ if and only if $N \geq 5$ and $K \leq 0$.


\subsection{Generalized H-type groups}\label{s:genheis}
 These structures were introduced in \cite{BR-MCPHtype}, and constitute a large class of Carnot groups where the optimal synthesis is known. This class contains Kaplan H-type groups, and some of these structures admit non-trivial abnormal minimizing geodesics.

We take the definitions directly from \cite{BR-MCPHtype}, to which we refer for more details. Let $(G,\star)$ be a step $2$ Carnot group, with Lie algebra $\mathfrak{g}$ of rank $k$, dimension $n$ satisfying  $\dim\mathfrak{g}_1=k$, $\dim\mathfrak{g}_2=n-k$ and
\begin{equation}
[\mathfrak{g}_1,\mathfrak{g}_1] = \mathfrak{g}_2, \qquad [\mathfrak{g}_i,\mathfrak{g}_2]=0, \qquad i= 1,2.
\end{equation}
Any choice $g$ of a scalar product on $\mathfrak{g}_1$ induces a left-invariant sub-Riemannian structure $(\distr,g)$ on $G$, such that $\distr(p) = \mathfrak{g}_1(p)$ for all $p \in G$. 
We extend the scalar product $g$ on $\mathfrak{g}_1$ to the whole $\mathfrak{g}$, which we denote with the same symbol. 

For any $V \in  \mathfrak{g}_2$, the skew-symmetric operator $J_V: \mathfrak{g}_1 \to \mathfrak{g}_1$ is defined by
\begin{equation}
g(X,J_V Y) = g(V,[X,Y]), \qquad \forall X,Y \in \mathfrak{g}_1.
\end{equation}
\begin{definition} \label{d:genht}
We say that a step $2$ Carnot group is of \emph{generalized H-type} if there exists a symmetric, non-zero and non-negative operator $S : \mathfrak{g}_1 \to \mathfrak{g}_1$ such that
\begin{equation}\label{eq:13}
J_V J_W + J_W J_V = -2 g(V,W) S^2, \qquad \forall V,W \in \mathfrak{g}_2.
\end{equation}
\end{definition}
\begin{rmk}
The above definition is well posed and does not depend on the choice of the extension of $g$. More precisely, if \eqref{eq:13} is verified for the operators $J_V$ defined by a choice of an extension of $g$, then the operators $\tilde{J}_V$ defined by a different extension $\tilde{g}$ will verify \eqref{eq:13}, with the \emph{same} operator $S$.
\end{rmk}
\begin{rmk}
A generalized H-type group does not admit non-trivial abnormal ge\-o\-des\-ics, and is thus ideal, if and only if $S$ is invertible. When $n=k+1$, we are in the case of corank $1$ Carnot groups. If $S$ is also non-degenerate (and thus $k=2d$ is even and $S>0$), we are in the case of contact Carnot groups. The case $S = \mathrm{Id}_{\mathfrak{g}_1}$ and $k=2d$ corresponds to classical Kaplan H-type groups.
\end{rmk}

The next result follows from the explicit expression for the Jacobian determinant of generalized H-type groups \cite[Lemma 20]{BR-MCPHtype}, which in turn allows to compute explicit distortion coefficients. The latter, in turn, can be bounded by a power law thanks to \cite[Cor.\ 27]{BR-MCPHtype}. In particular, we have the following.
\begin{lemma}
[Sharp bound to generalized H-type distortion]\label{l:srhtype}
Let $(G,\distr,g)$ be a generalized H-type group, with dimension $n$ and rank $k$, equipped with a left-invariant measure $\mis$. Let $N  \in \R$. The inequality
\begin{equation}
\beta_t(x,y) \geq t^N, \qquad \forall t \in [0,1],
\end{equation}
holds for all points $x, y \in G$ with $y \notin \cut(x)$, if and only if $N \geq k+3(n-k)$, the latter number being the geodesic dimension of the Carnot group.
\end{lemma}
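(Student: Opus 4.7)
The first reduction is to exploit the left invariance of both the sub-Riemannian structure and the measure $\mis$. If $L_g : G \to G$ denotes left translation by $g\in G$, then $(L_g)_*$ sends geodesics to geodesics, preserves the cut locus, and $(L_g)_\sharp \mis = \mis$. Consequently, $Z_t(L_g(x), L_g(\mathcal{B}_r(y))) = L_g(Z_t(x,\mathcal{B}_r(y)))$ and $\beta_t(x,y) = \beta_t(L_g x, L_g y)$ for every $g \in G$. Thus it suffices to prove the claim for $x=e$, the group identity, and $y = q \notin \cut(e)$.

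Next I would apply Lemma \ref{l:distortioncomputed} using a left-invariant frame $X_1, \dots, X_n$ along the unique geodesic joining $e$ and $q$. Since the measure is left-invariant, the density $\mis(X_1(t), \dots, X_n(t))$ is constant in $t$, so the frame-dependent factor cancels and
\begin{equation}
\beta_t(e,q) = \frac{\det N_0^\vers(t)}{\det N_0^\vers(1)}.
\end{equation}
The numerator is, up to sign, the Jacobian determinant of the partial exponential map $\lambda \mapsto \exp_e(t\lambda)$ at the initial covector $\lambda_0$ of the geodesic, computed in the chosen bases; this is exactly the quantity whose explicit expression in terms of $t$ and the spectral data of the operator $J_{\lambda_0^v}$ (the vertical part of $\lambda_0$) is provided by \cite[Lemma 20]{BR-MCPHtype}. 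Substituting these formulas gives an explicit product expression for $\beta_t(e,q)$ as a function of $t$ and of the eigenvalues of $|J_{\lambda_0^v}|$.

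For the inequality direction ($N \geq k+3(n-k)$ suffices), I would invoke the ratio estimate of \cite[Cor.\ 27]{BR-MCPHtype}: the monotonicity/comparison results proved therein show precisely that, for every admissible $\lambda_0$ corresponding to a point outside the cut locus,
\begin{equation}
\frac{\det N_0^\vers(t)}{\det N_0^\vers(1)} \geq t^{k+3(n-k)}, \qquad t \in [0,1],
\end{equation}
which is the desired bound. For the sharpness direction (no exponent smaller than $k+3(n-k)$ works), I would analyze the asymptotics $t \downarrow 0$ of the explicit formula obtained above: each ``horizontal'' eigendirection of $S$ contributes a factor $\sim t$ while each ``vertical'' direction contributes $\sim t^3$, so that
\begin{equation}
\beta_t(e,q) = C(\lambda_0) \, t^{k+3(n-k)} (1+o(1)), \qquad t \to 0^+,
\end{equation}
with $C(\lambda_0) > 0$. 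Hence any bound $\beta_t(e,q) \geq t^N$ valid uniformly in $q \notin \cut(e)$ and $t\in[0,1]$ forces $N \geq k+3(n-k)$ by letting $t \downarrow 0$.

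The main technical obstacle is step two: identifying $\det N_0^\vers(t)$ with the Jacobian determinant actually computed in \cite{BR-MCPHtype} requires carefully matching the Darboux lift of a left-invariant frame with the canonical trivialization of $T^*G$ used there, and checking that constant factors coming from the choice of frame cancel in the ratio. Once this identification is made, the bound and its sharpness are immediate consequences of the cited results together with the expansion at $t=0$, the latter being nothing but the statement that $k + 3(n-k)$ is the geodesic dimension (see Theorem \ref{t:asymptotics-intro}).
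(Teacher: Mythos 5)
Your proposal is correct and follows essentially the same route as the paper: the published proof of Lemma~\ref{l:srhtype} is in fact a one-line reference to the explicit Jacobian formula of \cite[Lemma 20]{BR-MCPHtype} and the comparison bound of \cite[Cor.\ 27]{BR-MCPHtype}, which are exactly the two external results you invoke for the lower bound. The scaffolding you add --- reduction to $x=e$ by left-invariance, cancellation of the frame density factor in Lemma~\ref{l:distortioncomputed} via a left-invariant frame, identification of $\det N_0^\vers(t)$ with the Jacobian of $\exp_e^t$ in the canonical trivialization, and sharpness via the $t\to 0^+$ asymptotics encoded by the geodesic dimension --- simply makes explicit what the paper leaves implicit and is consistent with how the paper itself treats the Heisenberg and Grushin examples.
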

As a consequence of \ref{l:srhtype} and Theorem~\ref{t:jacointerpineq} we have the following result, in the ideal setting.
\begin{cor}
\label{c:interpohtype}
Let $(G,\distr,g)$ be an ideal generalized H-type group, with dimension $n$ and rank $k$, equipped with a left-invariant measure $\mis$. Let $\mu_0,\mu_1 \in \mathcal{P}^{ac}_c(G)$. Let $\mu_t = (T_t)_\sharp \mu_0 = \rho_t \mis$ be the unique Wasserstein geodesic joining $\mu_0$ with $\mu_1$. Then,
\begin{equation}\label{eq:jacinterpineq-htype}
\frac{1}{\rho_t(T_t(x))^{1/n}} \geq \frac{(1-t)^{\tfrac{k+3(n-k)}{n}}}{\rho_0(x)^{1/n}} + \frac{t^{\tfrac{k+3(n-k)}{n}}}{\rho_1(T(x))^{1/n}}, \qquad \mis-\mathrm{a.e.},\, \forall\, t \in [0,1].
\end{equation}
The above inequality is sharp, in the sense that if one replaces the exponent $k+3(n-k)$ with a smaller one, the inequality fails for some choice of $\mu_0,\mu_1$.
\end{cor}

\begin{rmk}
The restriction to ideal structures in the above corollary arises from the requirements of the general theory leading to Theorem~\ref{t:jacointerpineq}, while this assumption is not necessary in Lemma~\ref{l:srhtype}. However, we remark that abnormal geodesics of generalized H-type groups are very docile (they consists in straight lines, and never lose minimality). Thus, we expect all the above results to hold also for non-ideal generalized H-type groups. This is supported by the positive results obtained for corank $1$ Carnot groups obtained in \cite{BKS-geomcorank1} and the forthcoming Corollary \ref{c:BMhtype}.
\end{rmk}

The sharp Brunn-Minkowski inequality for ideal generalized H-type groups follows from Theorem~\ref{t:equivalenza} and Lemma~\ref{l:srhtype}. However, thanks to the results of \cite{RY-BMproduct} for product structures, we are able to remove the ideal assumption.
\begin{cor}
\label{c:BMhtype}
Let $(G,\distr,g)$ be a generalized H-type group, with dimension $n$ and rank $k$, equipped with a left-invariant measure $\mis$. For all non-empty Borel sets $A,B \subset G$, we have 
\begin{equation}
\mis(Z_t(A,B))^{\tfrac{1}{n}} \geq (1-t)^{\tfrac{k+3(n-k)}{n}} \mis(A)^{\tfrac{1}{n}} + t^{\tfrac{k+3(n-k)}{n}} \mis(B)^{\tfrac{1}{n}}, \quad \forall\, t \in [0,1].
\end{equation}
The above inequality is sharp, in the sense that if one replaces the exponent $k+3(n-k)$ with a smaller one the inequality fails for some choice of $A,B$.
\end{cor}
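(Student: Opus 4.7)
\medskip

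\noindent\textbf{Proof proposal.} The plan is to split the argument according to whether the structure is ideal. In the ideal case ($S$ invertible), the sharp Brunn-Minkowski inequality is a direct combination of Lemma~\ref{l:srhtype} and Theorem~\ref{t:equivalenza}: Lemma~\ref{l:srhtype} gives the sharp pointwise distortion bound $\beta_t(x,y) \geq t^{k+3(n-k)}$ for every $(x,y)\notin\cut(G)$ and $t\in[0,1]$, and Theorem~\ref{t:equivalenza} (applied with $N = k+3(n-k)$) converts this into the claimed Brunn-Minkowski inequality with exponent $(k+3(n-k))/n$.

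For the non-ideal case, the idea is to reduce to the ideal case by exploiting the product structure recalled in Section~\ref{s:genheis}. Concretely, when $S$ has nontrivial kernel, decompose orthogonally $\mathfrak{g}_1 = (\ker S)^\perp \oplus \ker S$; since $J_V$ vanishes on $\ker S$ for every $V\in\mathfrak{g}_2$, the directions in $\ker S$ span a central Euclidean factor, and one obtains a metric isomorphism
\begin{equation}
(G,\distr,g,\mis) \cong (G_0, \distr_0, g_0, \mis_0) \times (\R^{d}, \mathcal{L}^d),
\end{equation}
where $G_0$ is an ideal generalized $H$-type group of rank $k_0 = \rank(S)$ and total dimension $n_0 = k_0 + (n-k)$, and $d = k - k_0$. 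The first case yields on $G_0$ the sharp Brunn-Minkowski inequality with exponent $(k_0+3(n-k))/n_0$, and on the Euclidean factor the classical one with exponent $1 = d/d$.

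One then invokes the product Brunn-Minkowski theorem of \cite{RY-BMproduct}: if two sub-Riemannian structures separately satisfy weighted geodesic Brunn-Minkowski inequalities of exponents $\mathcal{N}_1/n_1$ and $\mathcal{N}_2/n_2$ respectively, then their metric product satisfies the corresponding inequality with exponent $(\mathcal{N}_1 + \mathcal{N}_2)/(n_1 + n_2)$. Applied here, this produces on $G$ the Brunn-Minkowski inequality with exponent
\begin{equation}
\frac{(k_0 + 3(n-k)) + d}{n_0 + d} \;=\; \frac{k_0 + 3(n-k) + (k-k_0)}{n} \;=\; \frac{k+3(n-k)}{n},
\end{equation}
which is precisely the claim. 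Sharpness in both cases is inherited from Lemma~\ref{l:srhtype}: if one could improve the exponent $k+3(n-k)$ in \eqref{eq:bmsr-intro0}, specializing $A$ to a point and $B$ to a small ball around a non-cut point (as in the proof of $(ii)\Rightarrow(i)$ in Theorem~\ref{t:equivalenza}) would yield a strictly better pointwise distortion bound on $G_0$, contradicting the optimality statement in Lemma~\ref{l:srhtype}.

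The main obstacle I anticipate is the legitimacy of the reduction to the product: one has to check that the metric and measure on a non-ideal generalized $H$-type group truly split as a \emph{sub-Riemannian} product compatible with the hypotheses of \cite{RY-BMproduct}, since $Z_t(A,B)$ for product sets need not split into Cartesian products of intermediate-point sets. This is the content of the product Brunn-Minkowski theorem being invoked, and amounts to carefully comparing the geodesic structure of $G$ with that of $G_0 \times \R^d$; everything else is bookkeeping of the exponents.
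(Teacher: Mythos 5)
Your argument follows the paper's proof essentially line for line: the same reduction of $G$ to the product $G_0 \times \R^d$ of an ideal generalized $H$-type group with an Euclidean factor, the same appeal to Lemma~\ref{l:srhtype} plus Theorem~\ref{t:equivalenza} on $G_0$, the same invocation of the product Brunn-Minkowski theorem from \cite{RY-BMproduct}, and the same exponent bookkeeping. The only slip is in the sharpness paragraph, where you say the putative improved Brunn-Minkowski inequality would yield a better pointwise distortion bound ``on $G_0$'': it should read ``on $G$'' (shrink the ball in $G$ around a non-cut point of $G$ and use Definition~\ref{d:distcoeff} directly), which works because Lemma~\ref{l:srhtype} is stated for arbitrary, not necessarily ideal, generalized $H$-type groups.
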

\begin{proof}
In this proof, given $N,n \in \N$, we denote $\mathrm{BM}(N,n)$ the following property: for all non-empty Borel sets $A,B$, we have
\begin{equation}
\mis(Z_t(A,B))^{1/n} \geq (1-t)^{N/n} \mis(A)^{1/n} + t^{N/n} \mis(B)^{1/n}, \qquad \forall\, t \in [0,1].
\end{equation}
A generalized H-type group $G$ is the product of an ideal one $G_0$ with dimension $n_0=n-d$ and rank $k_0=k-d$, and $d$ copies of the Euclidean $\R$, for a unique $d \geq 0$ (see \cite[Prop.\ 19]{BR-MCPHtype}). Furthermore, the left-invariant measure $\mis$ of $G$ is the product of the a left-invariant measure $\mis_0$ of $G_0$ and $d$ copies of the Lebesgue measures $\mathcal{L}$ of each factor $\R$. It follows immediately from Lemma~\ref{l:srhtype} and Theorem~\ref{t:equivalenza} that $G_0$, equipped with the measure $\mis_0$, satisfies $\mathrm{BM}(N_0,n_0)$, with $N_0 = k_0+3(n_0-k_0)$. Furthermore, each copy of $\R$, equipped with the Lebesgue measure, satisfies the standard linear Brunn-Minkowski inequality $\mathrm{BM}(1,1)$. It follows from \cite[Thm.\ 3.3]{RY-BMproduct} that the product $G = G_0 \times \R^d$ equipped with the left-invariant measure $\mis = \mis_0 \times \mathcal{L}^d$ satisfies $\mathrm{BM}(N_0+d,n_0+d) = \mathrm{BM}(k+3(n-k),n)$, which is the desired inequality. 

Assume that $G$ satisfies $\mathrm{BM}(k+3(n-k)-\varepsilon,n)$ for some $\varepsilon>0$. Let $x  \notin \cut(y)$. Letting  $A= x \in G$ and $B = \mathcal{B}_r(y) \subset G$, and taking the limit for $r \downarrow 0$, we obtain that $\beta_t(x,y) \geq t^{k+3(n-k) - \varepsilon}$, contradicting the results of Lemma~\ref{l:srhtype}.
\end{proof}

We can also easily recover the following result proved in \cite{BR-MCPHtype}: a generalized H-type group with dimension $n$ and rank $k$, equipped with a left-invariant measure $\mis$, satisfies the $\mathrm{MCP}(K,N)$ if and only if $N \geq k+3(n-k)$ and $K \leq 0$.

\subsection{Grushin plane}\label{s:grush}

The Grushin plane $\G_2$ is the sub-Riemannian structure on $\R^2$ defined by the global generating vector fields
\begin{equation}
X_1 = \partial_x, \qquad X_2 = x \partial_y.
\end{equation}
We stress that the rank of $\distr = \spn\{X_1,X_2\}$ is not constant. More precisely, the structure is Riemannian on $\{ x \neq 0\}$, and it is singular otherwise. We equip the Grushin plane with the Lebesgue measure $\mis= \mathcal{L}^2$ of $\R^2$. In  canonical coordinates $(u,v,x,y)$ on $T^* \R^2$, the corresponding Hamiltonian is 
\begin{equation}
H(u,v,x,y) = \frac{1}{2}(u^2 + x^2 v^2).
\end{equation}
Hamilton equations are easily integrated, and the Hamiltonian flow 
\begin{equation}
e^{t\vec{H}}(u_0,v_0,x_0,y_0) =(u(t),v(t),x(t),y(t))
\end{equation}
with initial covector $\lambda_0 =u_0 dx + v_0 dy  \in T_{(x_0,y_0)}^*\R^2$ reads
\begin{align*}
u(t) & = u_0 \cos(t v_0)- x_0 v_0 \sin(t v_0), \\
v(t) & = v_0, \\
x(t) & = x_0 \cos(t v_0) + u_0 \frac{\sin(t v_0)}{v_0},  \\
y(t) & = y_0 + \frac{\sin (2 t v_0) \left(v_0^2 x_0^2-u_0^2\right)+2 v_0 \left(t \left(v_0^2 x_0^2+u_0^2\right)+u_0 x_0-u_0 x_0 \cos (2 t v_0)\right)}{4 v_0^2}.
\end{align*}
In particular, $\exp_{(x_0,y_0)}^t(u_0,v_0) = (x(t),y(t))$. 
\begin{remark}
Notice that the geodesic flow is an analytic function of the initial data, and if $v_0=0$ the above equations are understood by taking the limit $v_0 \to 0$. We always adopt this convention in this section.
\end{remark}
To compute the distortion coefficients, fix $q_0 =(x_0,y_0) \in \R^2$, let $q \notin \cut(q_0)$, and let $\lambda_0 =u_0 dx + v_0 dy  \in T_{(x_0,y_0)}^*\R^2$ the covector of the unique minimizing geodesic $\gamma:[0,1] \to \R^2$ joining $q_0$ with $q$. 

In order to use Lemma~\ref{l:distortioncomputed} for the computation of the distortion coefficient, we choose the global Darboux frame induced by the global sections of $T(T^*\R^2)$:
\begin{equation}
E_1 = \partial_{u}, \qquad E_2= \partial_{v}, \qquad F_1 = \partial_{x}, \qquad F_2 = \partial_y.
\end{equation}
In particular, the horizontal part of the Jacobi matrix $N^\vers_0(t)$ is simply the Jacobian of the exponential map $(u,v) \mapsto \exp^t_{(x_0,y_0)}(u,v)$ in these coordinates, computed at $(u_0,v_0)$. A straightforward computation and Lemma~\ref{l:distortioncomputed} yield the following.
\begin{prop}[Grushin distortion coefficient]\label{p:coeffgrush}
Let $q \notin \cut(q_0)$. Then
\begin{equation}\label{eq:coeffgrush}
\beta_t(q_0,q) = t\frac{\left(u_0^2+t u_0 v_0^2 x_0+v_0^2 x_0^2\right) \sin (t v_0) -t u_0^2 v_0 \cos (t v_0)}{\left(u_0^2+u_0 v_0^2 x_0+v_0^2 x_0^2\right)\sin (v_0) -u_0^2 v_0 \cos (v_0)}, \qquad \forall t \in [0,1],
\end{equation}
where $(u_0,v_0)$ is the initial covector of the unique geodesic joining $q_0$ with $q$. 
\end{prop}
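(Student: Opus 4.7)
The plan is to apply Lemma~\ref{l:distortioncomputed} to the unique minimizing geodesic joining $q_0$ to $q$, using the global Darboux frame $(E_1,E_2,F_1,F_2)=(\partial_u,\partial_v,\partial_x,\partial_y)$ on $T^*\R^2$ introduced just above the statement. Two features make this frame convenient: first, the pair $E_1,E_2$ spans the vertical subspace $\ver_\lambda$ everywhere, with the Darboux relations $\sigma(E_i,F_j)=\delta_{ij}$ automatic from $\sigma=du\wedge dx+dv\wedge dy$; second, the horizontal projection $X_i(t):=\pi_*F_i(t)=\partial_{x_i}$ is constant along the extremal. The second point makes the volumetric ratio $\mis(X_1(t),X_2(t))/\mis(X_1(1),X_2(1))$ in Lemma~\ref{l:distortioncomputed} identically equal to $1$, so the formula for the distortion coefficient reduces to
\begin{equation*}
\beta_t(q_0,q)=\frac{\det N^\vers_0(t)}{\det N^\vers_0(1)}.
\end{equation*}

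Next, I would identify $N^\vers_0(t)$ with the Jacobian of the exponential map in coordinates. By definition the Jacobi fields $\J_i(t)=e^{t\vec{H}}_* E_i|_{\lambda_0}$ project to $\pi_*\J_i(t)=\partial_{p_i}\exp^t_{q_0}(u_0,v_0)$, and since $(X_1(t),X_2(t))=(\partial_x,\partial_y)$ the matrix $N^\vers_0(t)$ is exactly the $2\times 2$ Jacobian of the map $(u_0,v_0)\mapsto(x(t),y(t))$ computed at $(u_0,v_0)$. The computation is then reduced to a direct calculation from the integrated Hamiltonian flow displayed above.

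The third step is the explicit simplification. Differentiating the closed-form expressions for $x(t)$ and $y(t)$ with respect to $(u_0,v_0)$, expanding the determinant, and using double-angle identities to absorb $\sin(2tv_0),\cos(2tv_0)$ produces, after regrouping, the identity
\begin{equation*}
\det N^\vers_0(t)=\frac{t}{v_0^2}\bigl[(u_0^2+tu_0 v_0^2 x_0+v_0^2 x_0^2)\sin(tv_0)-t u_0^2 v_0\cos(tv_0)\bigr].
\end{equation*}
Taking the ratio with $\det N^\vers_0(1)$ cancels the factor $1/v_0^2$ and yields the asserted formula; the limit case $v_0=0$ is handled by analytic continuation (the Hamiltonian flow being analytic in the initial covector), consistent with the Riemannian geodesic flow on $\{x\neq 0\}$.

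The main obstacle is precisely step three. The partial derivatives $\partial y/\partial u_0$ and $\partial y/\partial v_0$ produce a sum of trigonometric-polynomial terms that do not visibly collapse until one rewrites $\sin(2tv_0),\cos(2tv_0)$ through their half-angle forms and carefully regroups the resulting monomials in $u_0,v_0,x_0$. All of the conceptual content is encapsulated in Lemma~\ref{l:distortioncomputed}; once the bookkeeping is in place, the proposition follows by a routine verification.
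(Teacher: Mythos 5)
Your approach matches the paper's: apply Lemma~\ref{l:distortioncomputed} with the global Darboux frame $(\partial_u,\partial_v,\partial_x,\partial_y)$, observe that the measure ratio is trivial because $X_i(t)=\partial_{x_i}$ is constant and $\mis=\mathcal{L}^2$, and identify $N_0^\vers(t)$ with the Jacobian of the coordinate exponential map $(u,v)\mapsto\exp^t_{q_0}(u,v)$; the paper's proof consists of exactly these observations followed by the same determinant computation. There is, however, a small slip in your displayed intermediate identity: the prefactor in $\det N_0^\vers(t)$ should be $t/v_0^3$, not $t/v_0^2$. One sees this by specializing to $x_0=0$ and sending $v_0\to 0$: the Jacobian matrix degenerates to $\mathrm{diag}(t,\,u_0^2t^3/3)$ with determinant $u_0^2t^4/3$, whereas your formula gives $\tfrac{tu_0^2}{v_0^2}\bigl(\sin(tv_0)-tv_0\cos(tv_0)\bigr)\sim u_0^2t^4v_0/3\to 0$. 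Since the extraneous power of $v_0$ appears identically in $\det N_0^\vers(t)$ and $\det N_0^\vers(1)$, it cancels in the ratio, so the formula for $\beta_t(q_0,q)$ that you deduce is nonetheless the one asserted, and the proof is sound once the prefactor is corrected.
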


For the Grushin plane, $t_{\mathrm{cut}}(u_0,v_0) = \pi/|v_0|$ (see \cite[Sec.\ 3.2]{ABS-GaussBonnet} or \cite[Ch.\ 13]{nostrolibro}). Hence, since $q \notin \cut(q_0)$, in the above formula it is understood that $|v_0| < \pi$, in which case one can check directly that $\beta_t(q_0,q)>0$ for all $t \in (0,1]$.

\smallskip
We have the following non-trivial estimate.

\begin{prop}
\label{p:distorsioneg2}
Let $N  \in \R$. The inequality
\begin{equation}
\beta_t(q_0,q) \geq t^N, \qquad \forall t \in [0,1],
\end{equation}
holds for all points $q_0, q \in \mathbb{G}_2$ with $q \notin \cut(q_0)$, if and only if $N \geq 5$.
\end{prop}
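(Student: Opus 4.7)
The plan is to establish the necessity of $N \geq 5$ and the sufficiency $(\beta_t \geq t^5)$ separately, both via the explicit formula of Proposition~\ref{p:coeffgrush}.

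For necessity, one performs an asymptotic analysis of $\beta_t$ near $t = 1$. Setting $q_0 = (x_0, 0)$ with $x_0 > 0$ and $u_0 = \alpha x_0$, a direct computation of $\beta_t'(1)$ from Proposition~\ref{p:coeffgrush} followed by the limit $v_0 \to 0^+$ yields
\begin{equation*}
\lim_{v_0 \to 0^+} \beta_t'(1) = 1 + \frac{3(1+\alpha)^2}{\alpha^2 + 3\alpha + 3}.
\end{equation*}
Elementary calculus on the right-hand side (critical points at $\alpha = -1,-3$, horizontal asymptote $1$) gives supremum $5$, attained at $\alpha = -3$. For any $N < 5$, choosing $\alpha$ close to $-3$ and $v_0$ sufficiently small produces $\beta_t'(1) > N$; Taylor expansion then yields $\beta_t - t^N \sim (\beta_t'(1) - N)(t-1) < 0$ for $t < 1$ close to $1$, contradicting $\beta_t \geq t^N$.

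For sufficiency, the inequality $\beta_t \geq t^5$ is recast as a monotonicity property. Introducing $s := tv_0$, $a := u_0$, $b := v_0 x_0$, and $\phi(s) := \sin s - s \cos s$ (so that $\phi'(s) = s\sin s$), Proposition~\ref{p:coeffgrush} reads $\beta_t = t\,N(tv_0)/N(v_0)$ with $N(s) := a^2 \phi(s) + ab\,\phi'(s) + b^2 \sin s$. Then $\beta_t \geq t^5$ is equivalent to the function $F(s) := N(s)/s^4$ being non-increasing on $(0, \pi]$ for every fixed $(a,b) \neq (0,0)$. Differentiating,
\begin{equation*}
s^5 F'(s) = g(s)\,a^2 + B(s)\,ab + C(s)\,b^2,
\end{equation*}
where $g(s) = (s^2 - 4)\sin s + 4s\cos s$, $B(s) = s(s\cos s - 3\sin s)$, and $C(s) = s\cos s - 4\sin s$. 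The task thus reduces to showing that the quadratic form $Q(a,b)$ on the right-hand side is negative definite for every $s \in (0, \pi)$.

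This requires (i) $g(s) < 0$ on $(0, \pi)$, which follows from $g(0) = 0$ and $g'(s) = s(s \cos s - 2\sin s) < 0$ via a standard sign analysis of the auxiliary function $s\cos s - 2 \sin s$ (vanishing at $0$, negative throughout $(0, \pi]$, attaining $-\pi$ at $\pi$); and (ii) the discriminant $\Delta(s) := 4g(s)C(s) - B(s)^2$ is strictly positive on $(0, \pi)$. The main obstacle is (ii). After expansion via double-angle identities, $\Delta(s)$ becomes an explicit combination of polynomials in $s$ with $\cos 2s$ and $\sin 2s$, whose positivity on $(0,\pi)$ I would verify by a careful sign analysis, supported by the consistent boundary asymptotics $\Delta(s) \sim 8 s^6/45$ as $s \to 0^+$ and $\Delta(\pi) = \pi^2(16 - \pi^2) > 0$. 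As a consistency check, the direction $a/b = -B(s)/(2g(s))$ along which $Q$ degenerates asymptotically as $s \to 0^+$ corresponds, at $t = 1$, exactly to $\alpha = -3$, dovetailing with the necessity part and confirming the sharpness of the constant $N = 5$.
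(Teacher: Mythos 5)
Your overall architecture is sound, but the proof is incomplete at the decisive step of the sufficiency half.

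For necessity, the argument via $\beta_t'(1)$ is valid and in fact more transparent than what the paper does: the paper works out the $v_0=0$ case by passing to logarithmic derivatives and reducing to a quadratic polynomial inequality, whereas you directly exhibit the extremal direction $u_0 = -3x_0$, $v_0 \to 0$. A small slip: as $\alpha \to \pm\infty$ the quantity $1 + 3(1+\alpha)^2/(\alpha^2+3\alpha+3) \to 4$, not $1$; this does not affect the conclusion (the supremum over $\alpha$ is still $5$, attained at $\alpha=-3$).

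For sufficiency, the reformulation $\beta_t \geq t^5 \iff F(s) := N(s)/s^4$ non-increasing, and the decomposition $s^5 F'(s) = g(s)a^2 + B(s)ab + C(s)b^2$, are both correct and natural. The reduction to (i) $g(s)<0$ and (ii) $\Delta(s):= 4g(s)C(s) - B(s)^2 \geq 0$ on $(0,\pi)$ is likewise correct; and indeed, if you carry out the expansion, $\Delta(s)$ is precisely the paper's function $\bar W(z)$, namely $(64-25s^2)\sin^2 s + 10s(s^2-8)\sin s\cos s + s^2(16-s^2)\cos^2 s$. So you have correctly identified the same pivotal scalar inequality that the paper must prove. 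Point (i) is easy, as you note. But point (ii) is the heart of the entire proposition, and you have not proved it — ``whose positivity I would verify by a careful sign analysis'' is not a proof, and the inequality is not routine: it is sharp to leading order near $s=0$, where $\Delta(s) \sim \tfrac{8}{45}s^6$, and a naive bound fails. The paper devotes essentially its whole proof to this point, computing $\bar W^{(5)}$, replacing $\sin$ and $\cos$ by degree-three and degree-six Taylor lower bounds, integrating back five times to obtain $\bar W(z)\ge z^6\bigl(\tfrac{8}{45}-\tfrac{z^2}{105}-\tfrac{4z^6}{13365}\bigr)$ on $(0,2.67)$, and then doing separate crude estimates on $[2.67,\pi)$. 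Without some argument of comparable substance, the gap is genuine, and the sufficiency direction is unestablished. Your boundary asymptotics and the cross-check $a/b = -B/(2g)\to$ the $\alpha=-3$ direction as $s\to 0$ are good evidence that the inequality is true and sharp, but they are not a proof of positivity in the interior.
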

\begin{rmk}
Even though the Grushin plane is a quotient of the Heisenberg group, it is not clear how to deduce a bound for distortion coefficients of $\G_2$ starting from the knowledge of the corresponding inequality for $\mathbb{H}_3$. Actually, the most surprising aspect of Proposition~\ref{p:distorsioneg2} is its sharpness. As it is clear from the proof, the necessity of the condition $N\geq 5$ is due to pairs of points $q_0,q$ located on opposite sides of the singular set $\{x=0\}$.
\end{rmk}
\begin{proof}
Let $q = \exp_{q_0}(u_0,v_0)$, with $|v_0|<\pi$, and $q_0 = (x_0,y_0) \in \R^2$. If $x_0 =0$,
\begin{equation}
\beta_t(q_{0},q) = t \times \frac{\sin (t v_0)-t v_0 \cos (t v_0)}{\sin (v_0)-v_0 \cos (v_0)} \geq t^4, \qquad \forall t \in [0,1],
\end{equation}
which follows from the inequality of \cite[Lemma 18]{R-MCPcorank1}, and $|v_0|< \pi$. We now proceed by assuming $x_0 \neq 0$ (by symmetry we actually assume $x_0 > 0$). 

\textbf{Case $v_0 =0$.} This case, corresponding to straight horizontal lines possibly crossing the singular region, is the one which yields the ``only if'' part of the theorem, and we will settle it first. In this case the trigonometric terms disappear, and
\begin{equation}
\beta_t(q_{0},q) = t^2 \times \frac{t^2 u_0^2+3 t u_0 x_0+3 x_0^2}{u_0^2+3 u_0 x_0+3 x_0^2}.
\end{equation}
We want to find the best $N \in \R$, such that for all $x_0 >0$ and $u_0 \in \R$, it holds
\begin{equation}
\frac{t^2 u_0^2+3 t u_0 x_0+3 x_0^2}{u_0^2+3 u_0 x_0+3 x_0^2} \geq t ^{N-2}, \qquad \forall t \in [0,1].
\end{equation}
Since both sides are strictly positive for all $t \in (0,1]$, we can take the logarithms and the above inequality is equivalent to
\begin{equation}
\int_{tu}^u \frac{d}{dz} \log f_{x_0}(z) \, dz \leq (N-2) \int_{tu}^u \frac{d}{dz}\log |z| \, dz, \qquad \forall t \in (0,1),\; u \in \R,
\end{equation}
where $f_{x_0}(z) := z^2 + 3 z x_0 + 3 x_0^2$. This inequality is equivalent to the corresponding inequality for the integrands. After some computations, we obtain the condition
\begin{equation}
(N-4) z^2 + 3x_0(N-3) z + 3x_0^2(N-2) \geq 0, \qquad \forall x_0>0,\, z \in \R.
\end{equation}
One easily checks that the above holds if and only if $N \geq 5$. This proves the ``only if'' part of the statement.

\textbf{Case $v_0 \neq 0$.} By symmetry, we actually assume $v_0 >0$. If $u_0 =0$, then 
\begin{equation}
\beta_t(q_{0},q) = t \frac{\sin(tv_0)}{\sin(v_0)} \geq t^2 \geq t^5, \qquad \forall t \in (0,1).
\end{equation}
Hence in the following we consider $u_0 \neq 0$. We recall the assumptions made so far:
\begin{equation}
x_0 >0, \qquad v_0> 0, \qquad u_0 \neq 0.
\end{equation}
In this case we rewrite \eqref{eq:coeffgrush} as
\begin{equation}
\beta_t(q_0,q) = t \times \frac{f_a(tv_0)}{f_a(v_0)}, \qquad a:= \frac{v_0 x_0}{u_0} \in \R_0 =\R \setminus\{0\},
\end{equation}
where, for all $a \in \R_0$, we defined
\begin{equation}
f_a(\xi):= (1+a \xi + a^2)\sin(\xi) - \xi \cos(\xi).
\end{equation}
It remains to prove that for all $a \in \R_0$ and $N \geq 5$ it holds
\begin{equation}
\frac{f_a(tv_0)}{f_a(v_0)} \geq t^{N-1}, \qquad \forall t \in [0,1].
\end{equation}
In particular, it is sufficient to prove the case $N=5$, which we assume from now on. Since both sides are strictly positive on $t \in (0,1]$, we can take the logarithms and the inequality is equivalent to
\begin{equation}
\int_{tv_0}^{v_0} \frac{d}{d z} \log f_{a}(z) \, dz \leq 4 \int_{tv_0}^{v_0} \frac{d}{dz}\log |z| \, dz, \qquad \forall t \in (0,1),\; a \in \R_0.
\end{equation}
The above inequality is equivalent to the corresponding one for the integrands. After some computation, we obtain the equivalent inequality
\begin{equation}\label{eq:troiaboia}
W_a(z):=Q_a(z) \sin(z) - z P_a(z) \cos(z)\geq 0, \qquad \forall z \in (0,\pi),\; a \in \R_0,
\end{equation}
where we defined the two polynomials:
\begin{equation}
P_a(z) = a (a + z)+4 , \qquad Q_a(z) = (a+z)(4a-z) +4.
\end{equation}
Consider $a \mapsto W_a(z)$. It is easy to check that for all fixed $z \in (0,\pi)$, we have
\begin{equation}
\lim_{a \to \pm \infty} W_a(z) = +\infty.
\end{equation}
Moreover, $\partial_a W_a(z)$ is linear, hence the function $a \mapsto W_a(z)$ has a unique minimum. Then \eqref{eq:troiaboia} is equivalent to the fact that this minimum is non-negative for all $z \in (0,\pi)$. Setting $\partial_a W_a(z) = 0$, we obtain
\begin{equation}
a_{\text{min}} = -\frac{z}{2}\times \frac{3 \sin(z) - z \cos(z)}{4 \sin(z) - z \cos(z)} \leq 0.
\end{equation}
Hence, \eqref{eq:troiaboia} is equivalent to $W_{a_{\text{min}}}(z) \geq 0$ for all $z \in (0,\pi)$. Replacing, and after some computations, we have that such a condition is equivalent to
\begin{equation}\label{eq:troiaboia2}
\bar{W}(z):=\alpha(z) \sin(z)^2 + \beta(z) \cos(z)\sin(z) + \gamma(z) \cos(z)^2 \geq 0, \qquad \forall z \in (0,\pi),
\end{equation}
where we have defined the following polynomials
\begin{align}
\alpha(z) & = (64-25z^2), \\
\beta(z) & = 10z(z^2-8),  \\
\gamma(z) & = z^2 (16-z^2)  >0.
\end{align}


By looking to the graph of $W(z)$, for $z \in (0,\pi)$, one notices that the inequality \eqref{eq:troiaboia2} is extremely sharp for $z$ close to $0$, while it is easier to prove for larger $z$. Hence, we split the proof of \eqref{eq:troiaboia2} into two parts.

\paragraph{(i) Proof of \eqref{eq:troiaboia2} on $(0,2.67)$} Notice that $\bar{W}^{(i)}(0) =0$ for all $i =0,1,2,3,4$, and
\begin{equation}
\bar{W}^{(5)}(z) = 8 \left(2 z^4+8 z^2+3\right) \sin (2 z)+80 z \cos (2 z).
\end{equation}
This is the first $i$-th derivative whose polynomial factors multiplying the trigonometric functions are all non-negative. Furthermore, recall that
\begin{equation}\label{eq:approxsin}
\sin(x) \geq x - \frac{x^3}{6}, \qquad  x \in [0,+\infty),
\end{equation}
\begin{equation}\label{eq:approxcos}
\cos(x) \geq 1 - \frac{x^2}{2} +\frac{x^4}{24}-\frac{x^6}{720}, \qquad  x \in [0,+\infty).
\end{equation}
Hence, using the explicit form of $\bar{W}^{(5)}$, and the fact that the polynomial factors are non-negative, we obtain
\begin{align}\label{eq:polcom}
\bar{W}^{(5)}(z) & \geq 8 \left(2 z^4+8 z^2+3\right) \left(2z - \frac{4 z^3}{3}\right) + 80z \left(1-2 z^2+\frac{2 z^4}{3}-\frac{4 z^6}{45}\right)\\
& = \frac{64}{9} z \left(18-9 z^2 -4 z^6\right), \qquad \forall z \in (0,\pi).
\end{align}
Integrating five times the above inequality on the interval $[0,z]$, and since $\bar{W}^{(i)}(0) =0$ for all $i =0,1,2,3,4$, we obtain
\begin{equation}
\bar{W}(z) \geq z^6 \left(-\frac{4 z^{6}}{13365}-\frac{z^2}{105}+\frac{8}{45}\right), \qquad \forall z \in (0,\pi).
\end{equation}
The term in parenthesis in right hand side of the above is a third order polynomial, for which we can compute explicitly the roots. The first positive root occurs at $z_* \simeq 2.67491$. Hence the inequality  $\bar{W}(z) \geq 0$ is proved for $z \in (0,2.67)$.

\paragraph{(ii) Proof of \eqref{eq:troiaboia2} on $[2.67,\pi)$} On this interval the inequality is easier to verify with rough estimates. Indeed, $\bar{W}(z)$ in \eqref{eq:troiaboia2} is the sum of three terms, and it is sufficient to bound each one of them with the corresponding minimum. The first term
\begin{equation}
(64 - 25 z^2) \sin(z)^2  \quad \text{attains its minimum on $[2.67,\pi)$ at $z = 2.67$},
\end{equation}
where its value is approximately $-23.57$. The second term
\begin{equation}
10 z (z^2-8) \sin(z)\cos(z) \quad \text{attains it minimum on $[2.67,\pi)$ at $z \simeq 3$},
\end{equation}
where its value is approximately $-4.20$. The third term
\begin{equation}
z^2 (16 - z^2) \cos(z)^2 \quad \text{attains its minimum on $[2.67,\pi)$ at $z = 2.67$}.
\end{equation}
where its value is approximately $50.19$. Thus $\bar{W}(z)$ is larger than the sum of the three aforementioned values, which is positive. This proves \eqref{eq:troiaboia2}, and concludes the proof of the proposition.
\end{proof}
Since the Grushin structure is ideal, one obtains the following consequences.
\begin{cor}
\label{c:grushin-interpolation}
Let $\mu_0,\,\mu_1 \in \mathcal{P}^{ac}_c(\G_2)$. Let $\mu_t = (T_t)_\sharp \mu_0 = \rho_t \mathcal{L}^2$ be the unique Wasserstein geodesic joining $\mu_0$ with $\mu_1$. Then,
\begin{equation}\label{eq:jacinterpineq-grushin}
\frac{1}{\rho_t(T_t(x))^{1/2}} \geq \frac{(1-t)^{5/2}}{\rho_0(x)^{1/2}} + \frac{t^{5/2}}{\rho_1(T(x))^{1/2}}, \qquad \mathcal{L}^2-\mathrm{a.e.},\, \forall\, t \in [0,1].
\end{equation}
The above inequality is sharp, in the sense that if one replaces the exponent $5$ with a smaller one, the inequality fails for some choice of $\mu_0,\mu_1$. If $\mu_1$ is not absolutely continuous, an analogous result holds, provided that $t \in [0,1)$, and that in \eqref{eq:jacinterpineq-grushin} the second term on the right hand side is omitted.
\end{cor}
\begin{cor}
\label{c:grushin-BM}
For all non-empty Borel sets $A,B \subset \G_2$, we have 
\begin{equation}
\mathcal{L}^2(Z_t(A,B))^{1/2} \geq (1-t)^{5/2} \mathcal{L}^2(A)^{1/2} + t^{5/2} \mathcal{L}^2(B)^{1/2}, \qquad \forall\, t \in [0,1].
\end{equation}
The above inequality is sharp, in the sense that if one replaces the exponent $5$ with a smaller one, the inequality fails for some choice of $A,B$.
\end{cor}
Finally, by taking $A = y \in \G_2$, and using the fact that the Grushin plane admits a one-parameter group of metric dilations, we obtain the following result.
\begin{cor}
\label{c:grushin-MCP}
The Grushin plane, equipped with the Lebesgue measure, satisfies the $\mathrm{MCP}(K,N)$ if and only if $N \geq 5$ and $K \leq 0$. 
\end{cor}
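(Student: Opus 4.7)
The plan is to reduce $\mathrm{MCP}(K,N)$ to the pointwise distortion bound $\beta_t(x,y)\geq t^N$ via Theorem~\ref{t:equivalenza} and the fact that $\G_2$ admits a one-parameter group of metric dilations, so that the general $(K,N)$ case collapses to the $K=0$ case already governed by Proposition~\ref{p:distorsioneg2}.

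For the \emph{sufficient} direction, assume $N\geq 5$ and $K\leq 0$. By Proposition~\ref{p:distorsioneg2}, $\beta_t(x,y)\geq t^5\geq t^N$ for every $(x,y)\notin\cut(\G_2)$ and $t\in[0,1]$. Since the Grushin plane is ideal (the only abnormal minimizers are trivial), Theorem~\ref{t:equivalenza}, part (i)$\Rightarrow$(iii), yields $\mis(Z_t(x,B))\geq t^N \mis(B)$ for all $x\in\G_2$ and all Borel $B$, i.e.\ $\mathrm{MCP}(0,N)$. To upgrade this to $\mathrm{MCP}(K,N)$ for arbitrary $K\leq 0$, I would note the elementary bound $t\bigl(\sinh(t\alpha)/\sinh(\alpha)\bigr)^{N-1}\leq t^N$ for all $\alpha\geq 0$ and $t\in[0,1]$ (immediate from the convexity of $\sinh$ on $[0,\infty)$), so the $\mathrm{MCP}(K,N)$ reference distortion is pointwise dominated by $t^N$.

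For the \emph{necessary} direction, I split according to the sign of $K$. If $K>0$, the classical Bonnet--Myers type consequence of $\mathrm{MCP}(K,N)$ forces the diameter to be at most $\pi\sqrt{(N-1)/K}$, which contradicts the unboundedness of $(\G_2,d_{SR})$; alternatively, one can dilate by $\delta_\lambda(x,y)=(\lambda x,\lambda^2 y)$ and derive a contradiction from the ensuing $\mathrm{MCP}(K\lambda^2,N)$ as $\lambda\to\infty$. For $K\leq 0$ and $N<5$, I use the dilation argument directly: since $\delta_\lambda$ multiplies $d_{SR}$ by $\lambda$ and the Lebesgue measure by $\lambda^3$, the scaling factor cancels in the ratio appearing in $\mathrm{MCP}$, and one obtains the equivalence $\mathrm{MCP}(K,N)\Leftrightarrow \mathrm{MCP}(K\lambda^2,N)$ for every $\lambda>0$. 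Letting $\lambda\downarrow 0$ and using $\sinh(t\alpha)/\sinh(\alpha)\to t$ as $\alpha\to 0$, one concludes that $\mathrm{MCP}(K,N)$ for $K<0$ implies $\mathrm{MCP}(0,N)$.

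Finally, $\mathrm{MCP}(0,N)$ with $N<5$ is ruled out as follows: given $x\in\G_2$, apply the definition with $B=\mathcal{B}_r(y)$ and divide by $\mis(\mathcal{B}_r(y))$; passing to the limit $r\downarrow 0$ and invoking Definition~\ref{d:distcoeff}, one obtains $\beta_t(x,y)\geq t^N$ for every $(x,y)\notin\cut(\G_2)$ and $t\in[0,1]$, which contradicts the sharpness statement of Proposition~\ref{p:distorsioneg2}. The only non-routine ingredient is the dilation compatibility; the main technical obstacle (sharpness of the distortion exponent) has already been overcome in Proposition~\ref{p:distorsioneg2}, so the present corollary is essentially an exercise in combining that bound with Theorem~\ref{t:equivalenza} and the metric scaling of $\G_2$.
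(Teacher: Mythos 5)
Your proof is correct, and it fills in exactly the argument the paper only sketches: both rely on Proposition~\ref{p:distorsioneg2} to get $\beta_t\geq t^5$, use Theorem~\ref{t:equivalenza} (resp.\ the interpolation inequality with $\mu_1=\delta_y$) to convert this to $\mathrm{MCP}(0,N)$ for $N\geq 5$, and invoke the one-parameter family of metric dilations $\delta_\lambda(x,y)=(\lambda x,\lambda^2 y)$ of $\G_2$ to handle general $K$ and to deduce sharpness. The only small thing worth noting is that the monotonicity $\alpha\mapsto\sinh(t\alpha)/\sinh(\alpha)$ is what lets the dilation limit $\lambda\downarrow 0$ go through cleanly (monotone convergence on the defining integral), which your appeal to convexity of $\sinh$ implicitly covers.
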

On the other hand, the Grushin \emph{half}-planes satisfy the $\mathrm{MCP}(K,N)$ if and only if $N \geq 4$ and $K \leq 0$, see \cite{R-gluing}.

\subsection{Sasakian manifolds}\label{s:sasaki}
Let $(\distr,g)$ be a contact sub-Riemannian structure on a $2d+1$-dimensional manifold $M$, that is $\distr =\ker \omega$ where  $\omega \in \Lambda^{1}M$ is a one-form such that  and  $d\omega|_{\distr} $ is non-degenerate. In particular, $(\distr,g)$ is ideal.

The Reeb vector field $X_{0}$ is the unique vector field satisfying $\omega(X_{0})=1$ and $d\omega(X_{0},\cdot)=0$. We  extend the sub-Riemannian metric to a global Riemannian structure (that we denote with the same symbol $g$) by declaring $X_0$ to an unit vector orthogonal to $\distr$. We denote by $\mis$ the corresponding canonical measure.
We define the \emph{contact endomorphism} $J:TM\to TM$ by:
\begin{equation}
g(X,JY)=d\omega(X,Y),\qquad \forall X,Y\in \Gamma(TM). 
\end{equation}
The structure is called \emph{Sasakian} if  the $(1,1)$ tensor defined on $M\times \R$
\begin{equation}
\mathbf{J}(X,f\partial_t) = (JX-fX_0,\omega(X)\partial_t),
\end{equation}
defines a complex structure. We denote with $\mathrm{R}^*$ and $\mathrm{Ric}^*$ the Riemann and Ricci tensors associated with the Tanaka-Webster connection $\nabla^*$ (see references below for details and precise definitions).

In \cite{AAPL,LLZ-Sasakian} it is proved that a $2d+1$-dimensional Sasakian structure endowed with its canonical volume satisfies the $\mathrm{MCP}(0,2d+3)$, under some curvature bounds. 
Thanks to Theorem~\ref{t:equivalenza} and \cite[Thm.~1.1]{LLZ-Sasakian}, we have the following consequence.
\begin{cor}
\label{c:sasaki}
Let $(\distr,g)$ a sub-Riemannian Sasakian structure on a $2d+1$-di\-men\-sion\-al manifold $M$, equipped with its canonical measure $\mis$. 
Assume that
\begin{itemize}
\item[(i)] $\mathrm{R}^*(X,JX,JX,X)\geq 0$ for every $X\in \Gamma(\distr)$,
\item[(ii)] $\mathrm{Ric}^*(X,X)-\mathrm{R}^*(X,JX,JX,X)\geq 0$ for every $X\in \Gamma(\distr)$.
\end{itemize}
Then, for all non-empty Borel sets $A,B \subset M$, we have 
\begin{equation}
\mis(Z_t(A,B))^{\tfrac{1}{2d+1}} \geq (1-t)^{\tfrac{2d+3}{2d+1}} \mis(A)^{\tfrac{1}{2d+1}} + t^{\tfrac{2d+3}{2d+1}} \mis(B)^{\tfrac{1}{2d+1}}, \quad \forall\, t \in [0,1].
\end{equation}
The above inequality is sharp, in the sense that if one replaces the exponent $2d+3$ with a smaller one the inequality fails for some choice of $A,B$.
\end{cor}

\section{Properties of the distortion coefficients}\label{s:distortionproperties}

As we have discussed in Section~\ref{s:esempi}, sub-Riemannian distortion coefficients present major differences with respects to the Riemannian case. In this section, we discuss some of their general properties. Henceforth, let $(\distr,g)$ be a fixed ideal sub-Riemannian structure on $M$, and let $x,y \in M$, with $y \notin \cut(x)$.

\subsection{Dependence on the distance} 

Under the above assumptions, $y = \exp_{x}(\lambda)$ for a unique $\lambda \in T_{x}^*M$ such that $\|\lambda\|  =\sqrt{2H(\lambda)}= d_{SR}(x,y)$. In particular, one can regard the sub-Riemannian distortion coefficients as a one-parameter family of functions depending on the initial covector $\lambda \in T^*M$ of a minimizing geodesic joining a pair of points $(x,y) \in M\times M  \setminus \cut(M)$. Loosely speaking:
\begin{equation}
\beta_t(x,\exp_{x}(\lambda)) = f_t(\lambda).
\end{equation}

The basic Riemannian examples where the $\beta_t$'s are explicit are space forms, where they depend on $\lambda$ \emph{only} through its (dual) norm $\|\lambda\| = d(x,y)$, see \eqref{eq:dist-ref-riem-intro}. As we discussed in Section~\ref{s:heis}, in the simplest sub-Riemannian structure, the Heisenberg group, the dependence on $\lambda$ is fundamentally more complicated, and $\beta_t(x,y)$ is \emph{not} a function of the sub-Riemannian distance between $x$ and $y$. A similar phenomenon occurs in the case of the Grushin plane, treated in Section~\ref{s:grush}.

\subsection{Small time asymptotics: proof of Theorem~\ref{t:asymptotics-intro}} 

For Riemannian structures, it is well known that
\begin{equation}
\beta_t(x,y) \sim C(x,y) t^n,
\end{equation}
with $n = \dim(M)$. This is the reason for the presence of a normalization factor $t^{-n}$ in the standard Riemannian distortion coefficients, which we did not include in Definition~\ref{d:introdist}. In fact, in the genuinely sub-Riemannian case, the asymptotic is remarkably different, as stated in Theorem~\ref{t:asymptotics-intro}. This result follows directly from \cite[Sec.\ 5.6, Sec.\ 6.5]{curvature}. We sketch a proof here for completeness.
\begin{proof}[Proof of Theorem~\ref{t:asymptotics-intro}]
 Let $\Sigma_x := M \setminus \cut(x)$. The function $\mathfrak{f} : M \to \R$ defined by $\mathfrak{f}(y) = \tfrac{1}{2}d^2_{SR}(x,y)$ is smooth on $\Sigma_x$. Furthermore, $d_y \mathfrak{f} \in T_y^*M$ is the final covector of the unique geodesic  joining $x$ with $y$. In particular, define the homothety $\phi_t : \Sigma_x \to M$  of ratio $t \in [0,1]$ and center $x \in M$ by the formula
\begin{equation}
\phi_t(y) : = \pi \circ e^{(t-1)\vec{H}}(d_y \mathfrak{f}), \qquad \forall y \in \Sigma_x.
\end{equation}
For all $\Omega \subset \Sigma_x$ we have $Z_t(x,\Omega) = \phi_t(\Omega)$. Since $\phi_t$ is a local diffeomorphism, and $\Sigma_x$ is open, we have that
\begin{equation}
\beta_t(x,y) \sim  \frac{\det(d_y \phi_t)}{\det(d_y \phi_1)}, \qquad \forall y \in \Sigma_x, \quad t \in [0,1].
\end{equation}
By \cite[Lemma 6.24]{curvature}, there exists $C(x,y)>0$ such that
\begin{equation}
\beta_t(x,y) \sim C(x,y) t^{\mathcal{N}_\lambda},
\end{equation}
where $\mathcal{N}_\lambda \in \mathbb{N} \cup \{+\infty\}$ is defined for all $\lambda \in T_x^*M$ in \cite[Def.\ 5.44]{curvature}.

As a consequence of \cite[Prop.\ 5.46]{curvature}, the function $\lambda \mapsto \mathcal{N}_\lambda$ is constant on an open Zariski set $A_x \subseteq T_x^* M$, where it attains its minimal value. In particular, this implies that the points $y \in \Sigma_x$ where $\mathcal{N}(x,y) > \min \mathcal{N}_\lambda$ has zero measure in $\Sigma_x$. The fact that $\mathcal{N}(x) \geq \dim(M)$ is \cite[Prop.\ 5.49]{curvature}.
\end{proof}

We stress that the set of initial covectors $\lambda \in T_x^*M$ such that $\mathcal{N}_\lambda > \mathcal{N}(x)$ can be non-empty. This is the case for all Carnot groups with Goursat-type distribution and dimension $n \geq 4$, such as the Cartan and the Engel groups. See \cite{Isidro}.    	

\appendix

\section{Conjugate times and optimality: proof of Theorem \ref{t:noconj}}\label{app:B}

The aim of this appendix is to give a self-contained proof of the fact that geodesics not containing abnormal segments lose minimality after their first conjugate point,
following \cite{nostrolibro,S-index}. The main difference with respect to the proof of the analogue statement in the Riemannian setting is that the explicit formula for the second variation of energy is usually expressed in terms of Levi-Civita connection and curvature, which are not available in the sub-Riemannian setting. Hence one has to work with a suitable generalization of the index form on the space of controls. Here, the sub-Riemannian structure is not required to be ideal.

A minimizing geodesic is a horizontal curve $\gamma_u$, parametrized with constant speed, such that its control $u$ is a solution of the constrained minimum problem:
\begin{equation}\label{eq:problema}
\min\{J(v) \mid  v \in \mathcal{U},\, E_x(v) = y\}.
\end{equation}
Here $J : \mathcal{U} \to \R$ is the energy functional and $E_{x}:\mathcal{U} \to M$ is the end-point map based at $x$, where  $\mathcal{U}\subseteq L^{2}([0,1],\R^{m})$, cf.\ Section \ref{s:eplm}.

The Lagrange multipliers rule, in the normal case, gives the first order necessary condition for a control (and the corresponding horizontal curve) to be a minimizer: there exists $\lambda \in T_y^*M$, such that
\begin{equation}\label{eq:multipliers2}
\lambda \circ D_u E_x  =  D_u J.
\end{equation}
Hence a solution of \eqref{eq:problema} is a pair $(u,\lambda)$ satisfying \eqref{eq:multipliers2}. Higher order conditions for the minimality of $\gamma_u$ are given by the second variation of $J$ on the level sets of $E_x$. The second differential of the restriction to the level set is not in general the restriction of the second differential to the tangent space to the level set $T_{u}E^{-1}_x(y)=\ker D_{u}E_{x}$. The following formulas hold (see \cite[Ch.\ 8]{nostrolibro} and \cite[Sec.\ 2.4]{riffordbook}).

\begin{prop}[Second variation of the energy] \label{p:hessiana1} Let $\gamma_{u}:[0,1] \to M$ be a normal geodesic  containing no abnormal segments joining $x$ with $y$ satisfying \eqref{eq:multipliers2} for some $\lambda \in T_y^*M$. Then, we have
\begin{equation} \label{eq:hessiana1}
\Hess_{u}J|_{E^{-1}_x(y)}(v)=D^{2}_{u}J(v)-\lambda \circ D^{2}_{u}E_{x}(v), \qquad \forall\, v\in \ker D_{u}E_{x}.
\end{equation}
Moreover we have
\begin{equation}\label{eq:hessianaancillary}
D^{2}_{u}J(v)=\|v\|_{L^{2}},\quad  D^{2}_{u}E_{x}(v)=\underset{0\leq \tau\leq t\leq 1}{\iint}[(P_{\tau,1})_{*}X_{v(\tau)},(P_{t,1})_{*}X_{v(t)}](y)d\tau dt,
\end{equation}
where $X_{v(t)}:=\sum_{i=1}^{m}v_{i}(t)X_{i}$ and $P_{\tau,t}$ denotes the flow of the non-autonomous vector field $X_{u(t)}$, with initial datum at time $\tau$ and final time $t$.
\end{prop}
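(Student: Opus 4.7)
The plan is to treat the three assertions of Proposition~\ref{p:hessiana1} in sequence, of increasing technical difficulty.

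First, I would prove formula \eqref{eq:hessiana1}, the Lagrange multiplier expression for the restricted Hessian. This is a general fact from constrained calculus of variations, independent of the sub-Riemannian context. Take a smooth curve $\varepsilon \mapsto u(\varepsilon)$ in the level set $E_x^{-1}(y)$ with $u(0)=u$ and $\dot u(0)=v \in \ker D_u E_x$. Differentiating the identity $E_x(u(\varepsilon))=y$ twice at $\varepsilon=0$ yields $D_u^2 E_x(v,v) = -D_u E_x(\ddot u(0))$, while $\frac{d^2}{d\varepsilon^2}\big|_{\varepsilon=0} J(u(\varepsilon)) = D_u^2 J(v,v) + D_u J(\ddot u(0))$. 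Substituting the multiplier identity $D_u J = \lambda \circ D_u E_x$ eliminates the $\ddot u(0)$ term and yields \eqref{eq:hessiana1}.

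The formula for $D_u^2 J$ in \eqref{eq:hessianaancillary} is then immediate: $J(u) = \frac{1}{2}\|u\|_{L^2}^2$ is a continuous quadratic form on the Hilbert space $L^2([0,1],\R^m)$, whose second Fr\'echet differential is the associated bilinear form, giving $\|v\|_{L^2}^2$ on the diagonal.

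The identification of $D_u^2 E_x(v)$ on $\ker D_u E_x$ with the Lie-bracket double integral is the main obstacle. The strategy is variation of parameters: factor the perturbed flow as $P_{0,1}^{u+\varepsilon v} = P_{0,1} \circ \Psi_\varepsilon$, where $\Psi_\varepsilon$ is the time-$1$ map of the non-autonomous pullback field $\varepsilon Y(t) := \varepsilon (P_{0,t})^{-1}_* X_{v(t)}$ starting from the identity. Expanding $\Psi_\varepsilon(x) = x + \varepsilon \phi_1 + \varepsilon^2 \phi_2 + O(\varepsilon^3)$ to second order in local coordinates produces
\begin{equation*}
\phi_1 = \int_0^1 Y(t)(x)\,dt, \qquad \phi_2 = \iint_{0\leq \tau \leq t \leq 1} (DY(t)|_x)\,Y(\tau)(x)\,d\tau\,dt,
\end{equation*}
and then composing with $P_{0,1}$ and expanding yields
\begin{equation*}
D_u^2 E_x(v) = 2(P_{0,1})_* \phi_2 + D^2 P_{0,1}(x)(\phi_1,\phi_1).
\end{equation*}
The identity $(P_{0,1})_* \circ (P_{0,t})^{-1}_* = (P_{t,1})_*$ rewrites all pushforwards into the form $(P_{t,1})_* X_{v(t)}$ evaluated at $y$. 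The key bookkeeping is to show that, precisely when $v \in \ker D_u E_x$ (so $(P_{0,1})_* \phi_1 = 0$), the symmetric contributions in $2(P_{0,1})_* \phi_2 + D^2 P_{0,1}(\phi_1,\phi_1)$ cancel, leaving only the antisymmetric part, which is exactly the commutator integrand $[(P_{\tau,1})_* X_{v(\tau)},\,(P_{t,1})_* X_{v(t)}](y)$ over the triangle $\tau \leq t$. This antisymmetrization is cleanest in the chronological calculus of Agrachev-Sachkov, whose second-order Volterra expansion of the chronological exponential produces the stated formula directly, and is the route I would follow to avoid tracking coordinate-dependent remainders.
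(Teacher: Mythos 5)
Your argument is correct. Note that the paper does not prove Proposition~\ref{p:hessiana1}; it cites \cite[Ch.\ 8]{nostrolibro} and \cite[Sec.\ 2.4]{riffordbook}, and your variation-of-parameters factorization $P_{0,1}^{u+\varepsilon v}=P_{0,1}\circ\Psi_\varepsilon$ with the second-order Volterra expansion of $\Psi_\varepsilon$ is precisely the route taken in those references. All three steps check out: the constrained-Hessian identity follows from differentiating $E_x(u(\varepsilon))=y$ twice and eliminating $\ddot u(0)$ via the multiplier relation, which is also exactly what makes the restricted Hessian well defined; $D_u^2J=\|\cdot\|_{L^2}^2$ is immediate; and the push-forward $(P_{0,1})_*\circ(P_{0,t})_*^{-1}=(P_{t,1})_*$, together with the fact that $(P_{0,1})_*$ is a Lie-algebra morphism, converts the $\phi_2$ integrand $(DY(t))Y(\tau)$ into $\tfrac12[\tilde Y(\tau),\tilde Y(t)]$ plus its symmetric companion. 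One small imprecision in the wording: since $DP_{0,1}|_x$ is invertible, the constraint $(P_{0,1})_*\phi_1=0$ forces $\phi_1=0$ outright, so the term $D^2P_{0,1}(\phi_1,\phi_1)$ vanishes on its own rather than cancelling against anything, and independently the symmetric part of $2\phi_2$ vanishes because it factors through $\int_0^1 Y(\tau)(x)\,d\tau=\phi_1=0$; the two vanishings are parallel consequences of $\phi_1=0$, not a cross-cancellation. This does not affect the conclusion.
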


Given a pair $(u,\lambda)$ such that $\gamma_{u}$ is a normal geodesic satisfying the first order condition \eqref{eq:multipliers2}, we denote by $u^{s}(t):=su(st)$ the reparametrized control associated with the reparametrized trajectory $\gamma_{u^s}(t)=\gamma_{u}(st)$, both defined for $t\in [0,1]$. The covector $\lambda^s =  s (P_{s,1}^*)\lambda \in T_{\gamma_u(s)}^*M$, is a Lagrange multiplier associated with $u^{s}$.

For normal geodesics containing no abnormal segments (see definition~\ref{d:noabseg}), conjugate points (in the sense of definition \ref{def:conj}) can be characterized by the second variation of the energy, as in the Riemannian case, cf.\ \cite[Ch.\ 8]{nostrolibro}.
\begin{prop}\label{p:a1}
Assume that a normal geodesic $\gamma_{u}:[0,1]\to M$ contains no abnormal segments. Then 
 $\gamma_{u}(s)$ is conjugate to $\gamma_{u}(0)$ if and only if $\Hess_{u^{s}}J|_{E^{-1}_x(\gamma_{u}(s))}$ is a degenerate quadratic form.
\end{prop}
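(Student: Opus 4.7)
The plan is to identify the kernel of $\Hess_{u^{s}}J|_{E^{-1}_x(\gamma_u(s))}$ with the intersection $\mathcal{I}(s):=e^{s\vec{H}}_{*}\ver_{\lambda_0}\cap \ver_{\lambda(s)}$, where $\lambda(t)=e^{t\vec{H}}(\lambda_0)$ is the normal extremal lift of $\gamma_u$. By Remark~\ref{r:readingconjugate}, $\gamma_u(s)$ is conjugate to $\gamma_u(0)$ along $\gamma_u$ if and only if $\mathcal{I}(s)\neq\{0\}$, so matching these two kernels is equivalent to the assertion. The crux is that the "no abnormal segments" assumption, applied via Theorem~\ref{t:utile} on every sub-interval, is equivalent to the surjectivity of $D_{u^s}E_x$ and to the uniqueness (up to scalar) of $\lambda^{s}$; this makes the constrained Hessian an unambiguous symmetric bilinear form on the codimension-$n$ closed subspace $\ker D_{u^s}E_x \subset L^2$ whose degeneracies are controlled purely by the Jacobi equation in $T^{*}M$.

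First I would combine the two formulas of Proposition~\ref{p:hessiana1} to rewrite the constrained Hessian by unfolding the nested brackets in \eqref{eq:hessianaancillary} and integrating the linearization of Hamilton's equations along $\lambda(t)$. The resulting expression shows that the Euler--Lagrange equation for $v \in \ker D_{u^s}E_x$ annihilating \eqref{eq:hessiana1} lifts to a linear Hamiltonian system in $T(T^{*}M)$, and its solutions are precisely the Jacobi fields $\J(t)\in T_{\lambda(t)}(T^{*}M)$ satisfying $\dot{\J}=0$ together with the boundary conditions $\pi_{*}\J(0)=0$ and $\pi_{*}\J(s)=0$, i.e., the elements of $\mathcal{I}(s)$.

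For the direction "conjugate $\Rightarrow$ degenerate Hessian", given a non-zero $\J \in \mathcal{I}(s)$, write $\J(t)=e^{t\vec{H}}_{*}\xi_0$ with $\xi_0\in\ver_{\lambda_0}$, and consider the family of extremals $\lambda_\epsilon(t)=e^{t\vec{H}}(\lambda_0+\epsilon\xi_0)$. Their projections produce a smooth family of controls $v_\epsilon$; the variation $v:=\partial_\epsilon v_\epsilon|_{\epsilon=0}$ lies in $\ker D_{u^s}E_x$ precisely because $\pi_{*}\J(s)=0$ (the endpoint is stationary to first order), and a direct computation using $\J(s)\in\ver_{\lambda(s)}$ yields $\Hess(v,v)=0$, so by symmetry $\Hess(v,\cdot)=0$. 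Conversely, starting from $v\in\ker D_{u^s}E_x$ with $\Hess(v,\cdot)=0$, the preceding ODE characterization produces a Jacobi field $\J$ in $\mathcal{I}(s)$, and the surjectivity of $D_{u^s}E_x$ ensures that the map $\J\mapsto v$ has trivial kernel, whence $\J\neq 0$.

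The main obstacle will be the converse direction: rigorously lifting the annihilating variation $v$ to a Jacobi field $\J$ and verifying both the boundary conditions $\J(0)\in\ver_{\lambda_0}$ and $\J(s)\in\ver_{\lambda(s)}$ requires a careful Lagrange multiplier argument on the second variation, together with an application of the symplectic structure to match the adjoint linearization of Hamilton's equations. Without the no-abnormal-segment hypothesis, degeneracies of the Hessian could arise from abnormal multipliers, i.e.\ covectors annihilating the image of $D_{u^s}E_x$ that are unrelated to conjugate points of the exponential map; the surjectivity of $D_{u^s}E_x$ on every sub-interval is precisely what rules out this pathology and makes the correspondence $\ker\Hess \leftrightarrow \mathcal{I}(s)$ an isomorphism.
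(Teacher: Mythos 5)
The overall strategy --- matching the kernel of $\Hess_{u^{s}}J|_{E^{-1}_x(\gamma_u(s))}$ with the intersection $\mathcal{I}(s) = e^{s\vec{H}}_{*}\ver_{\lambda_0}\cap\ver_{\lambda(s)}$ and using the reading of conjugate times from Jacobi matrices (Remark~\ref{r:readingconjugate}) --- is the right idea and matches the spirit of the argument in \cite[Ch.\ 8]{nostrolibro}, to which the paper defers. However, the step that is supposed to close the ``conjugate $\Rightarrow$ degenerate'' direction contains a genuine logical error. You write that a direct computation yields $\Hess(v,v)=0$, ``so by symmetry $\Hess(v,\cdot)=0$.'' This inference is false for a general symmetric bilinear form: $B(v,v)=0$ implies $B(v,\cdot)=0$ only when $B$ is positive (or negative) semidefinite, via the Cauchy--Schwarz inequality. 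Up to the \emph{first} conjugate time the constrained Hessian is indeed non-negative (this is exactly what the monotonicity of $\alpha$ in Lemma~\ref{l:a1} encodes), so there your step would be salvageable --- but Proposition~\ref{p:a1} concerns \emph{all} conjugate times $s$, and by Lemma~\ref{l:a1}(d) the Hessian is already indefinite past the first one. For a conjugate time beyond the first, knowing $\Hess(v,v)=0$ tells you nothing. To close the argument one must compute $\Hess_{u^s}J(v,w)$ directly for \emph{all} $w\in\ker D_{u^s}E_x$, not just $w=v$: concretely, one integrates the explicit second variation \eqref{eq:hessiana1}--\eqref{eq:hessianaancillary} by parts and uses the Jacobi equation (equivalently, the linearized Hamiltonian flow) satisfied by the lift $\J(t)$ of $v$ to kill the bulk term, leaving only boundary terms that vanish because $\J(0)$ and $\J(s)$ are vertical. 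That direct computation is the content missing from your proposal.

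A secondary point: the final sentence of your third paragraph, about the map $\J\mapsto v$ having trivial kernel and concluding ``whence $\J\neq 0$,'' reverses the needed implication. What must be shown is that a nonzero $v$ in the kernel of the Hessian produces a \emph{nonzero} Jacobi field; this requires showing that $v$ is recovered from $\J$ (i.e., the composite $v\mapsto\J\mapsto v$ is the identity, which is where the surjectivity of $D_{u^s}E_x$ enters), not merely that the map in the opposite direction is injective. This is a minor gap by comparison, but it should also be fixed.
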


The next two lemmas, proved in \cite[Ch.\ 8]{nostrolibro}, are crucial. For the reader's convenience, we provide a sketch of the proof. 
\begin{lemma}\label{l:a1} 
Assume that a normal geodesic $\gamma_{u}:[0,1]\to M$ contains no abnormal segments. Define the function $\alpha:(0,1]\to \R$ as follows
\begin{equation}\label{eq:quadcomp}
\alpha(s):= \inf \left\{ \|v\|_{L^{2}}^{2}- \lambda_{s}\circ D^{2}_{u^{s}}E_{x}(v)\mid \|v\|_{L^{2}}^{2}=1,\; v\in \ker D_{u^{s}}E_{x}  \right\}.
\end{equation}
Then $\alpha$ is continuous and has the following properties:
\begin{itemize}
\item[(a)] $\alpha(0):=\lim_{s\to 0}\alpha(s)=1$;
\item[(b)] $\alpha(s)=0$ implies that $\Hess_{u^{s}} J\big|_{E_{x}^{-1}(\gamma^{s}_{u}(1))}$ is degenerate;
\item[(c)] $\alpha$ is monotone decreasing;
\item[(d)] if $\alpha(\bar s)=0$ for some $\bar s>0$, then $\alpha(s)<0$ for $s>\bar s$.
\end{itemize}
\end{lemma}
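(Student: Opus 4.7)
The plan is to reformulate $\alpha(s)$ intrinsically on the interval $[0,s]$ (with the original control $u|_{[0,s]}$ and final covector $\lambda_s^{*}:=(P_{s,1})^{*}\lambda$), observe that it is the lowest eigenvalue of a compact perturbation of the identity, and then exploit a zero-extension principle to derive the monotonicity properties. Set
\begin{equation}
\tilde\alpha(s):=\inf\bigl\{\|w\|_{L^{2}([0,s])}^{2}-\lambda_s^{*}\circ D^{2}_{u|_{[0,s]}}E_{x}^{[0,s]}(w):\|w\|_{L^{2}([0,s])}=1,\ w\in\ker D_{u|_{[0,s]}}E_{x}^{[0,s]}\bigr\}.
\end{equation}
Under the substitution $v(\tau):=s\,w(s\tau)$ on $[0,1]$, control variations of $u|_{[0,s]}$ correspond bijectively to variations of $u^{s}$; both the $L^{2}$-norm squared and the bilinear form $\lambda^{s}\circ D^{2}_{u^{s}}E_{x}$ rescale by the same factor $s$, so the normalized ratios coincide and $\alpha(s)=\tilde\alpha(s)$. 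All subsequent arguments can hence be carried out on the intrinsic quadratic form.

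By Proposition~\ref{p:hessiana1}, the bilinear form $\lambda_s^{*}\circ D^{2}_{u|_{[0,s]}}E_{x}^{[0,s]}$ is given by integration against a smooth kernel on $[0,s]\times[0,s]$, hence represents a compact self-adjoint operator $K_{s}$ on $L^{2}([0,s],\R^{m})$. Since $\gamma_{u}$ contains no abnormal segments, $D_{u|_{[0,s]}}E_{x}^{[0,s]}$ is surjective for every $s\in(0,1]$, so $\ker D_{u|_{[0,s]}}E_{x}^{[0,s]}$ is a closed subspace of codimension $n$ depending continuously on $s$. The restriction of $I-K_{s}$ to this subspace is self-adjoint with essential spectrum $\{1\}$ and, below $1$, only eigenvalues of finite multiplicity; this yields simultaneously that $\tilde\alpha(s)$ is attained by an eigenvector, that it varies continuously in $s$, and that whenever $\tilde\alpha(s)=0$ any minimizer $w_{0}$ is a non-zero kernel element of the restricted Hessian, which combined with Proposition~\ref{p:a1} proves Property (b). Property (a) follows by noting that as $s\downarrow 0$ the integration domain in \eqref{eq:hessianaancillary} shrinks to a point, so the correction term tends to zero uniformly on the unit sphere and $\tilde\alpha(s)\to 1$.

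For the monotonicity (c), let $s_{1}<s_{2}$ and pick a near-minimizer $w_{1}$ of $\tilde\alpha(s_{1})$. Its extension $\tilde w_{1}$ by zero on $[s_{1},s_{2}]$ lies in $\ker D_{u|_{[0,s_{2}]}}E_{x}^{[0,s_{2}]}$ because zero-extending a control does not perturb the trajectory past $s_{1}$. Using the flow factorization $(P_{\tau,s_{2}})_{*}=(P_{s_{1},s_{2}})_{*}\circ(P_{\tau,s_{1}})_{*}$ for $\tau\leq s_{1}$, together with the pullback identity $\lambda_{s_{1}}^{*}=(P_{s_{1},s_{2}})^{*}\lambda_{s_{2}}^{*}$, one checks that the index form value is unchanged under zero extension, while the $L^{2}$-norm remains equal to $1$; hence $\tilde\alpha(s_{2})\leq\tilde\alpha(s_{1})$.

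Property (d) is the delicate one. Assume $\tilde\alpha(\bar s)=0$, let $w_{0}$ be a minimizer, and denote by $\tilde w_{0}$ its zero extension on $[0,s]$ with $s>\bar s$; by (c) we already have $\tilde\alpha(s)\leq 0$. Suppose for contradiction that $\tilde\alpha(s)=0$. Then $\tilde w_{0}$ is itself a minimizer and, by (b), lies in the kernel of the restricted Hessian on $[0,s]$. The Lagrange-multiplier formulation of this kernel condition—with respect to the endpoint constraint $DE_{x}^{[0,s]}(\tilde w_{0})=0$—produces a covector $\mu\in T^{*}_{\gamma_{u}(s)}M$ such that $(\tilde w_{0},\mu)$ generates a Jacobi curve on $[0,s]$, i.e.\ an integral curve of the linearization of the Hamiltonian flow along $\lambda(t)$. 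Since $\tilde w_{0}\equiv 0$ on $[\bar s,s]$, this Jacobi curve satisfies an unforced linear ODE on that subinterval with vanishing trace; unique continuation forces it to vanish identically on $[0,s]$, contradicting $\|w_{0}\|=1$. Hence $\tilde\alpha(s)<0$, proving (d). This last step is the main obstacle: in the absence of a canonical Levi-Civita connection the Jacobi equation must be written out directly as a linearized Hamiltonian system on $T^{*}M$, and one must verify that a control variation vanishing on a subinterval produces a Jacobi curve vanishing on that same subinterval. The non-abnormal assumption is used repeatedly to keep the kernels of $D_{u|_{[0,s]}}E_{x}^{[0,s]}$ and of its restrictions to subintervals of constant codimension, so that the Fredholm, extension, and unique-continuation arguments all work uniformly in $s$.
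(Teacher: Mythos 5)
Your treatment of (a), (b), and (c) is essentially the same as the paper's: both pass through the identification of the index form as $\langle(I-Q_s)v,v\rangle$ with $Q_s$ compact, deduce attainment of the infimum and continuity from spectral theory, prove (a) by a rescaling of the kernel in Proposition~\ref{p:hessiana1}, and prove (c) by zero-extension of controls (the paper phrases this through the reparametrized control $\hat v$ on $[0,1]$, you phrase it intrinsically on $[0,s_2]$, but these are equivalent). Your intrinsic reformulation $\alpha(s)=\tilde\alpha(s)$ is a convenient and correct bookkeeping device.

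The genuine gap is in (d). You assert that, because the zero extension $\tilde w_0$ vanishes on $[\bar s,s]$, ``unique continuation forces the Jacobi curve to vanish identically.'' This does not follow. The Jacobi curve $\J(t)\in T_{\lambda(t)}(T^*M)$ is $2n$-dimensional, while the control variation is only an $m$-dimensional \emph{observation} of $\J$: vanishing of $\tilde w_0$ on $[\bar s,s]$ tells you only that $\J(t)$ is vertical there and that its vertical part $\eta(t)\in T^*_{\gamma(t)}M$ annihilates the distribution. A vertical Jacobi curve with $\eta(t)\in\distr^\perp_{\gamma(t)}$ on a whole interval need not be zero; what it \emph{does} imply, by the paper's Lemma~\ref{l:a2}, is that the underlying geodesic segment $\gamma|_{[\bar s,s]}$ is abnormal. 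It is only \emph{after} invoking the no-abnormal-segments hypothesis, via the contrapositive of Lemma~\ref{l:a2}, that one can conclude the Jacobi curve vanishes. You flag the unique-continuation step as ``the main obstacle,'' but you neither prove it nor point out that it is equivalent to ruling out abnormality on the subinterval; you instead attribute the role of the non-abnormal hypothesis to a different purpose (constant corank of the end-point map). The paper's route through (c) and Lemma~\ref{l:a2} is exactly what fills this hole: monotonicity turns $\alpha(\bar s)=\alpha(s_1)=0$ into $\alpha\equiv 0$ on $[\bar s,s_1]$, hence a continuum of conjugate points, which Lemma~\ref{l:a2} converts into abnormality of the segment — the contradiction. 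Your ``zero-extension then unique continuation'' path is morally the same argument, but the key observability/abnormality equivalence is left as an unsubstantiated claim.
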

\begin{proof}[Sketch of the proof] Notice that one can write
\begin{equation}
\|v\|_{L^{2}}^{2}- \lambda_{s}\circ D^{2}_{u^{s}}E_{x}(v)=\langle (I-Q_{s})(v)| v\rangle_{L^{2}},
\end{equation}
where $Q_{s}:L^{2}([0,1],\R^{m})\to L^{2}([0,1],\R^{m})$ is a compact and symmetric operator 
As a consequence, one can prove that the infimum in \eqref{eq:quadcomp} is attained. 

Since every restriction $\gamma_{u}|_{[0,s]}$ is not abnormal, the rank of $D_{u^{s}}E_{x}$ is maximal, equal to $n$, for all $s \in (0,1]$. Then, by Riesz representation Theorem, we find a continuous orthonormal basis $\{v_i^s\}_{i \in \N}$ for $ \ker D_{u^{s}}E_{x}$, yielding a continuous one-parameter family of isometries $\phi_s:  \ker D_{u^{s}}E_{x} \to \mathcal{H}$ on a fixed Hilbert space $\mathcal{H}$. Since also $s \mapsto Q_s$ is continuous (in the norm topology), we reduce \eqref{eq:quadcomp} to
\begin{equation}
\alpha(s) =1 - \sup\{ \langle \phi_s \circ Q_s \circ \phi_s^{-1}(w)|w\rangle_{\mathcal{H}} \mid w \in \mathcal{H},\; \|w\|_{\mathcal{H}}=1\},
\end{equation}
where the composition $\tilde{Q}_s:=\phi_s\circ Q_s\circ \phi_s^{-1}$ is a continuous one-parameter family of symmetric and compact operators on a fixed Hilbert space $\mathcal{H}$. The supremum coincides with the largest eigenvalue of $\tilde{Q}_s$, which is well known to be continuous as a function of $s$ if $\tilde{Q}_s$ is (see \cite[V Thm. 4.10]{Kato}). This proves that $\alpha$ is  continuous.

By a rescaling one can see that
\begin{equation}
D^{2}_{u^{s}}E_{x}(v)=s^{2}\underset{0\leq \tau\leq t\leq 1}{\iint}[(P_{s\tau,1})_{*}X_{v(s\tau)},(P_{st,1})_{*}X_{v(st)}]|_{\gamma_{u}(s)}d\tau dt.
\end{equation}
Taking the limit $s\to 0$, one can show that $Q_{s}\to 0$, hence $\tilde{Q}_{s}\to 0$, proving (a).

To prove (b), notice that $\alpha(\bar s)=0$ means that $I-Q_{\bar s}\geq 0$, and that  there exists a sequence $v_{n} \in \ker D_{u^{\bar s}} E_x$ of controls with $\|v_{n}\|_{L^2}=1$ and such that $\|v_{n}\|_{L^{2}}^{2}-\langle Q_{\bar s }(v_{n})| v_{n}\rangle_{L^{2}}\to 0$ for $n\to \infty$.   Up to extraction of a sub-sequence, we have that  $v_{n}$ is weakly convergent to some $\bar v$. By compactness of $Q_{\bar s}$, we deduce that $\langle Q_{\bar s}(\bar v)| \bar v \rangle_{L^{2}}=1$. Since $\|\bar v\|^{2}_{L^{2}}\leq 1$, we have $\langle (I-Q_{\bar s})(\bar v)| \bar v\rangle_{L^{2}}=0$. Being $I-Q_{\bar s}$ a bounded, non-negative symmetric operator, and since $\bar{v} \neq 0$, this implies that $A_{\bar{s}}$ is degenerate. 

To prove (c) let us fix $0\leq s \leq s'\leq 1$ and $v\in \ker D_{u^{s}}E_{x}$. Define
\begin{equation}\notag
\widehat{v}(t):=
\begin{cases}
\sqrt{\dfrac{s'}{s}}\,v\left(\dfrac{s'}{s}t\right), \quad 0\leq t\leq \dfrac{s}{s'},\\[0.4cm]
0, \qquad  \dfrac{s}{s'}<t\leq 1.
\end{cases}
\end{equation}
It follows that $\|\widehat{v}\|^{2}_{L^{2}}=\|v\|^{2}_{L^{2}}$, $\widehat{v}\in \ker D_{u^{s'}}E_{x}$, and $D^{2}_{u^{s}}E_{x}(v)=D^{2}_{u^{s'}}E_{x}(\widehat{v})$. As a consequence, $\alpha(s)\geq \alpha(s')$.

%

To prove (d), assume by contradiction that there exists $s_{1}>\bar s$ such that $\alpha(s_{1})=0$. By monotonicity of point (c), $\alpha(s)=0$ for every $\bar s \leq s \leq s_{1}$. This implies that every point in the image of $\gamma_{u}|_{[\bar s,s_{1}]}$ is conjugate to $\gamma_{u}(0)$. Thanks to Lemma \ref{l:a2}, the segment $\gamma_{u}|_{[\bar s,s_{1}]}$ is abnormal, contradicting the assumption on $\gamma_{ u}$.
\end{proof}

\begin{lemma} \label{l:a2} 
Let $\gamma:[0,1]\to M$ be a normal geodesic that does not contain abnormal segments. Then the set $\mathcal{T}_{c}:=\{t>0 \mid  \gamma(t) \text{ is conjugate to }\gamma(0)\}$ is discrete.
\end{lemma}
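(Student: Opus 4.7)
The plan is to argue by contradiction: suppose there exists a sequence $s_n \to s_0$ in $\mathcal{T}_{c}$ with $s_n \neq s_0$. Since $\alpha(0)=1>0$ and $\alpha$ is continuous by Lemma~\ref{l:a1}(a), necessarily $s_0>0$. I would first upgrade the continuity and monotonicity established for $\alpha$ in Lemma~\ref{l:a1} to every eigenvalue $\mu_1(s) \geq \mu_2(s) \geq \cdots$ of the compact self-adjoint operator $\tilde Q_s$ arising in that proof. The extension-by-zero isometry used for part~(c), combined with the minimax characterization, gives $\mu_k(s) \leq \mu_k(s')$ for $s \leq s'$ and every $k$; continuity in $s$ follows from standard perturbation theory for continuous families of compact self-adjoint operators. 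The conjugate condition at $s$ is equivalent to $1\in\sigma(\tilde Q_s)$. Since only finitely many eigenvalues of $\tilde Q_s$ can lie in a fixed neighborhood of $1$ uniformly for $s$ near $s_0$ (by uniform compactness of $\{\tilde Q_s\}$), the pigeonhole principle yields a single index $k$ with $\mu_k(s_n)=1$ along a subsequence. Continuity and monotonicity then force $\mu_k\equiv 1$ on an entire interval $J=[a,b]\ni s_0$; in particular $\det N^\vers_0(s)\equiv 0$ on $J$ and every point of $\g|_J$ is conjugate to $\g(0)$.

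The second stage extracts from this plateau an abnormal sub-segment of $\g|_J$, which contradicts the hypothesis. For $s\in J$, set $V(s) := \ker N^\vers_0(s) \subset \ver_{\lam_0} \cong \R^n$; by the previous step, $V(s)\neq 0$. Upper semicontinuity of $\dim V$ yields an open dense subinterval $J'\subset J$ on which $\dim V\equiv d\geq 1$ and $V$ forms a smooth rank-$d$ subbundle, hence admits a smooth nonvanishing local section $v(s)\in V(s)$. The smooth curve $\mathcal{J}(s):=e^{s\vec H}_*(v(s))\in T_{\lam(s)}(T^*M)$ satisfies $\pi_*\mathcal{J}(s)=0$ along $J'$, so it lies fibrewise in $\ver_{\lam(s)} \cong T^*_{\g(s)} M$ and defines a smooth covector field $\eta(s)$ along $\g|_{J'}$. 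Differentiating $N^\vers_0(s) v(s)\equiv 0$ and using the matrix ODE $\dot N = B(s)M + A(s)^*N$ from Lemma~\ref{l:subspacesriccati} yields the relation $B(s)M^\vers_0(s)v(s) = -N^\vers_0(s)\dot v(s)$; iterating this scheme (differentiating repeatedly and using the Darboux relations) together with the structural fact that $\ker B(s)$ corresponds to $\distr^\perp|_{\g(s)}$ --- a consequence of the expression $B(s)=2H(E,E)$ of the sub-Riemannian Hamiltonian evaluated on the vertical frame --- produces first the annihilation condition $\eta(s)\in\distr^\perp|_{\g(s)}$ and then the adjoint equation characterizing an abnormal extremal as in Theorem~\ref{t:utile}(A). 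Non-triviality of $\eta$ follows from $v(s)\neq 0$ together with the linear independence of the Jacobi-matrix columns. Hence $\g|_{J'}$ is abnormal, giving the desired contradiction.

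The hardest part is the second stage: converting the purely algebraic plateau $N^\vers_0(s)v(s)\equiv 0$ into the full system of equations defining an abnormal extremal on $J'$. The non-negativity together with the possible degeneracy of $B(s)$ --- precisely the feature distinguishing the sub-Riemannian from the Riemannian case --- is what makes the plateau compatible with abnormality, but it also forces a careful, iterative use of the symplectic matrix ODE to recover both the annihilation by $\distr$ and the adjoint relation. Carrying this out without appealing to Lemma~\ref{l:a1}(d) (which itself depends on Lemma~\ref{l:a2}) is the most delicate technical point.
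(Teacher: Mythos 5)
Your first stage is a valid alternative to the paper's. The paper goes from an accumulation point to a full interval of conjugate points by invoking the non-negativity of the Hamiltonian directly, i.e.\ the monotonicity of the Jacobi curve $t\mapsto e^{-t\vec H}_*\ver_{\lambda(t)}$ as in Lemma~\ref{l:smatrix}; you instead lift to the control side and use eigenvalue monotonicity and continuity of $\tilde Q_s$ with a pigeonhole argument, correctly avoiding the circular item~(d) of Lemma~\ref{l:a1}. This works. (Note though that upper semicontinuity of $\dim V$ gives an open nonempty — not dense — subinterval of constant rank, which is all you need.)

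The genuine gap is in the second stage. The curve $\mathcal{J}(s)=e^{s\vec H}_*(v(s))$ is \emph{not} a Jacobi field, since $v$ depends on $s$, so it does not inherit the symplectic ODE from which condition~(A) of Theorem~\ref{t:utile} would follow; ``iterating the scheme'' with this $\mathcal{J}(s)$ cannot produce the adjoint equation. The missing observation is the following. From your identity $B M^\vers_0 v=-N^\vers_0\dot v$ together with the Lagrangian relation $(M^\vers_0)^*N^\vers_0=(N^\vers_0)^*M^\vers_0$ from Lemma~\ref{l:subspacesriccati}, one gets
\[
m^*Bm \;=\; -v^*(M^\vers_0)^*N^\vers_0\dot v \;=\; -(N^\vers_0 v)^*M^\vers_0\dot v \;=\; 0,
\]
which, since $B\geq 0$, yields both the annihilation $Bm=0$ \emph{and}, back-substituting, $N^\vers_0\dot v\equiv 0$, i.e.\ $\dot v(s)\in V(s)$ for every smooth section. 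This forces $V(s)$ to be a \emph{constant} subspace $V_0\subset\R^n$ on $J'$. Only now may you take $v\equiv v_0\in V_0$, so that $\mathcal{J}(t)=e^{t\vec H}_*E(0)v_0$ becomes a genuine Jacobi field, vertical on all of $J'$ with $H$-null vertical component. For such a field, the linearized Hamilton equations in canonical coordinates (with vanishing horizontal part $a\equiv 0$ on $J'$, so that $\partial_p^2H\cdot b=0\Leftrightarrow b\in\distr^\perp$, and $\dot b=-\partial_p\partial_qH\cdot b$) identify the vertical component $b(t)$ as an abnormal extremal over $\gamma|_{J'}$, which is the desired contradiction. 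Without establishing constancy of $V$, you only produce a covector field in $\distr^\perp$ along $\gamma|_{J'}$, which is strictly weaker than exhibiting an abnormal extremal.
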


\begin{proof}[Sketch of the proof] Let $\lambda(t)$ be a normal extremal associated with the geodesic $\gamma(t)$, satisfying condition (N) of  Theorem \ref{t:utile}. Assume that the  set $\mathcal{T}_{c}$ has an accumulation point $\gamma(\bar t)$. 
The fact that the Hamiltonian is non-negative yields the existence of a segment $\gamma|_{[\bar t,\bar t+\eps]}$ all of whose points are conjugate to $\gamma(0)$. A computation in local coordinates on $T^{*}M$ shows that  $\gamma|_{[\bar t,\bar t+\eps]}$ is an abnormal extremal, namely satisfies characterization (A) of Theorem \ref{t:utile}. 
\end{proof}
We can now prove the following fundamental result.
\begin{theorem} \label{t:a1}
Let $\gamma:[0,1]\to M$ be a normal geodesic that does not contain abnormal segments. Then,
\begin{itemize}
\item[(i)] $t_{c}:=\inf\{t>0 \mid  \gamma(t) \text{ is conjugate to }\gamma(0)\}>0$;
\item[(ii)] For every $s>t_{c}$ the curve $\gamma|_{[0,s]}$ is not a minimizer.
\end{itemize}
\end{theorem}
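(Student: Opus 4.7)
The strategy is to read both assertions off the behaviour of the auxiliary function $\alpha:(0,1]\to\R$ constructed in Lemma~\ref{l:a1}, combined with the characterization of conjugate points as degenerate Hessians from Proposition~\ref{p:a1}.

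For part (i), I would use that $\lim_{s\to 0}\alpha(s)=1$ by Lemma~\ref{l:a1}(a), together with the continuity of $\alpha$, to produce an $\epsilon>0$ on which $\alpha>0$. Since $\alpha(s)$ is the infimum of the constrained Hessian on the unit sphere of $\ker D_{u^{s}}E_{x}$, positivity forces $\Hess_{u^{s}}J|_{E_{x}^{-1}(\gamma(s))}$ to be strictly positive definite, hence non-degenerate, for every $s\in(0,\epsilon)$. Proposition~\ref{p:a1} then rules out conjugate points on this interval, giving $t_{c}\geq\epsilon>0$.

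For part (ii), assume $t_{c}<\infty$ (otherwise the statement is vacuous) and identify $t_{c}$ with the quantity $\bar t:=\inf\{s>0\mid\alpha(s)\leq 0\}$. On the one hand, at any conjugate time $s^{*}$, Proposition~\ref{p:a1} provides a nonzero $v\in\ker D_{u^{s^{*}}}E_{x}$ lying in the kernel of the Hessian; evaluating the functional defining $\alpha(s^{*})$ at $v/\|v\|_{L^{2}}$ yields $\alpha(s^{*})\leq 0$, whence $t_{c}\geq\bar t$. Conversely, by definition of the infimum $\alpha>0$ on $(0,\bar t)$, so continuity forces $\alpha(\bar t)=0$; Lemma~\ref{l:a1}(b) then certifies that the Hessian at $\bar t$ is degenerate, so by Proposition~\ref{p:a1} we have $\bar t\geq t_{c}$, proving the equality and that $t_{c}$ itself is a conjugate time.

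Fix now $s>t_{c}$. Applying Lemma~\ref{l:a1}(d) with $\bar s=t_{c}$ yields $\alpha(s)<0$, which produces a variation $v\in\ker D_{u^{s}}E_{x}$ on which the restricted Hessian is strictly negative. To convert this infinitesimal datum into an admissible competitor with strictly smaller energy, I would exploit the fact that the absence of abnormal segments in $\gamma|_{[0,s]}$ makes $D_{u^{s}}E_{x}$ surjective, so the fibre $E_{x}^{-1}(\gamma(s))$ is a Banach submanifold near $u^{s}$; the implicit function theorem then furnishes a smooth curve $\tau\mapsto u_{\tau}$ in this fibre with $u_{0}=u^{s}$ and $\dot u_{0}=v$. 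A Taylor expansion of $J$ along $u_{\tau}$, in which the first-order term vanishes by \eqref{eq:multipliers2} and the second-order term equals $\Hess_{u^{s}}J|_{E_{x}^{-1}(\gamma(s))}(v)<0$, produces $J(u_{\tau})<J(u^{s})$ for small $\tau>0$, contradicting the minimality of $\gamma|_{[0,s]}$. The main obstacle is precisely this promotion from infinitesimal to actual variation: it is a standard second-order argument on Banach manifolds, but its validity hinges crucially on the surjectivity of $D_{u^{s}}E_{x}$, which is where the no-abnormal-segment hypothesis is indispensable.
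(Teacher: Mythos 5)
Your proof is correct and follows the same route as the paper: both deduce (i) from Lemma~\ref{l:a1}(a) plus Proposition~\ref{p:a1}, and (ii) from Lemma~\ref{l:a1}(d) via a strictly negative direction of the restricted Hessian, promoted to an actual energy-decreasing admissible variation. The paper's proof is a two-line sketch; you fill in precisely the details it leaves implicit (the identification $t_c=\bar t$ via continuity and (b), and the implicit-function-theorem construction of the competitor from surjectivity of $D_{u^s}E_x$).
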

\begin{proof}
Claim (i) follows directly from Proposition \ref{p:a1} and (a)--(b) of Lemma \ref{l:a1} (or also, independently, from Lemma \ref{l:a2}). Using also  (d) of Lemma \ref{l:a1}, one obtains claim (ii). Indeed, since the Hessian has a negative eigenvalue, we can find a variation joining the same end-points and shorter than the original geodesic, contradicting the minimality assumption.
\end{proof}

By applying Theorem \ref{t:a1} to every restriction $\gamma|_{[s_{1},s_{2}]}$ with $0\leq s_{1}< s_{2}<1$, we obtain Theorem~\ref{t:noconj} stated in Section~\ref{s:prel}.
\section{Positivity: proof of Lemma~\ref{l:keylemma1}}\label{a:cruciale}

\begin{proof}[Proof of Lemma~\ref{l:keylemma1}]
Recall that $E_1(t), \dots,E_n(t),F_1(t),\dots, F_n(t)$ is a fixed Darboux frame along the normal extremal $\lambda :[0,1] \to T^*M$, with initial covector $\lambda(0) = d_x\phi$. For all $s \in [0,1]$, consider the Jacobi matrices $\mathbf{J}^\vers_s(t)$ and $\mathbf{J}^\hors_s(t)$ defined in Section~\ref{s:sjm}, representing the family of Lagrange subspaces
\begin{equation}
e^{(t-s)\vec{H}}_* \spn\{E_1(s),\dots,E_n(s)\}, \qquad \text{and} \qquad  e^{(t-s)\vec{H}}_*  \spn\{F_1(s),\dots,F_n(s)\},
\end{equation}
respectively. Notice that $N^\vers_0(0) = \mathbbold{0}$ and, by the assumption of the Lemma, $N^\vers_0(t)$ is non-degenerate for all $t \in (0,1)$. We define $K(t):=N^\vers_0(t)^{-1}$.

We prove (a) for $t \in (0,1)$. Since no point $\gamma(t)$ is conjugate to $\gamma(0)$ for $t \in (0,1)$, it is sufficient to prove that $\det K(t)>0$ for small $t>0$. By applying Lemma~\ref{l:subspacesriccati} to the Jacobi matrix $\mathbf{J}^\vers_0(t)$, we obtain that $W(t):= N^\vers_0(t)M^\vers_0(t)^{-1}$ is symmetric and satisfies the Riccati equation
\begin{equation}\label{eq:r1}
\dot{W} = B(t) + A(t)^* W + W A(t) + W R(t) W, \qquad W(0) = \mathbbold{0}.
\end{equation}
Equation \eqref{eq:r1} holds provided that $M^{\vers}_0(t)$ is non-degenerate which, since $M^{\vers}_0(0) = \mathbbold{1}$, holds true for sufficiently small $t>0$. Again by Lemma~\ref{l:subspacesriccati}, $B(t) \geq 0$. Hence, a direct application of the matrix Riccati comparison theorem \cite[Appendix A]{BR-comparison} yields that $W(t) \geq 0$ for $t \in [0,\varepsilon]$. Moreover, since $M^\vers_0(0) = \mathbbold{1}$, and $N^\vers_0(t)$ is non-degenerate for $t \in (0,1)$, we have that $W(t) = N^\vers_0(t)M^\vers_0(t)^{-1} >0$ for small $t$. In particular $\det N^\vers_0(t)M^\vers_0(t)^{-1} > 0$, which implies $\det K(t)>0$, yielding (a).

To prove (b) and (c), we introduce a change of basis lemma, and  the $S$ matrix.

\begin{lemma}[Change of basis]\label{l:changeofbasis}
For all $s \in (0,1)$ and $t \in [0,1]$, we have
\begin{equation}\label{eq:change1}
\mathbf{J}_s^\vers(t) = - \mathbf{J}_0^\vers(t) N_0^\vers(s)^{-1} N_0^\hors(s) N_s^\vers(0) + \mathbf{J}_0^\hors(t) N_s^\vers(0).
\end{equation}
Moreover, for the original Jacobi matrix $\mathbf{J}(t)$, we have
\begin{equation}\label{eq:change2}
\mathbf{J}(t) = \mathbf{J}_0^\vers(t) M(0) + \mathbf{J}_0^\hors(t).
\end{equation}
\end{lemma}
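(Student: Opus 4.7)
The plan is to exploit the linearity and uniqueness of Jacobi fields: any Jacobi matrix is determined by its value at a single instant, and the matrices $\mathbf{J}^\vers_0(t)$, $\mathbf{J}^\hors_0(t)$ are, by construction, the ones whose values at $t=0$ are $\begin{pmatrix}\mathbbold{1} \\ \mathbbold{0}\end{pmatrix}$ and $\begin{pmatrix}\mathbbold{0} \\ \mathbbold{1}\end{pmatrix}$, respectively. Both identities \eqref{eq:change1}--\eqref{eq:change2} are then linear-algebraic expansions which it suffices to check at $t=0$ and propagate to all $t$ by uniqueness of solutions of the Jacobi equation. In particular \eqref{eq:change2} is essentially immediate in this framework: at $t=0$ the left hand side is $\mathbf{J}(0) = \begin{pmatrix} M(0) \\ \mathbbold{1} \end{pmatrix}$ (since $N(0) = \mathbbold{1}$ by construction, as noted just before \eqref{eq:eqforNN}), and the right hand side computes to the same.

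For \eqref{eq:change1}, the same uniqueness argument forces $\mathbf{J}^\vers_s(t) = \mathbf{J}^\vers_0(t) A + \mathbf{J}^\hors_0(t) B$ with $A = M^\vers_s(0)$ and $B = N^\vers_s(0)$. The second coefficient already matches the $N^\vers_s(0)$ appearing on the right-hand side of the claim, so the whole content of the lemma reduces to proving the identity $M^\vers_s(0) = -\,N^\vers_0(s)^{-1} N^\hors_0(s)\, N^\vers_s(0)$, which must be extracted from the symplectic nature of the Hamiltonian flow.

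To this end, I would observe that $e^{s\vec{H}}_*$, expressed in the Darboux frames at $\lambda(0)$ and at $\lambda(s)$, is the symplectic matrix $\Phi(s) = \begin{pmatrix} M^\vers_0(s) & M^\hors_0(s) \\ N^\vers_0(s) & N^\hors_0(s) \end{pmatrix}$, and that by definition $\mathbf{J}^\vers_s(0) = \Phi(s)^{-1} \begin{pmatrix} \mathbbold{1} \\ \mathbbold{0}\end{pmatrix}$. Applying the symplectic inversion formula (equivalently, computing $\sigma(\mathbf{J}^\vers_s, \mathbf{J}^\vers_0)$ and $\sigma(\mathbf{J}^\vers_s, \mathbf{J}^\hors_0)$, which are constant in $t$, and evaluating them at $t=s$ and $t=0$) yields the two auxiliary identities $M^\vers_s(0) = N^\hors_0(s)^T$ and $N^\vers_s(0) = -N^\vers_0(s)^T$. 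Combining these with the block relation $N^\vers_0(s)\, N^\hors_0(s)^T = N^\hors_0(s)\, N^\vers_0(s)^T$, which is one of the identities encoded in $\Phi\,\sigma\,\Phi^T = \sigma$, a short algebraic manipulation reshuffles them into the claimed formula. The main obstacle I anticipate is careful bookkeeping of transposes when evaluating the symplectic form on matrix-valued Jacobi fields; invertibility of $N^\vers_0(s)$, needed to write the denominator, is guaranteed by the non-conjugacy hypothesis for $s \in (0,1)$ already in force in the surrounding proof.
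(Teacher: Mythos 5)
Your proof is correct, and all the auxiliary identities check out: from the symplectic inversion formula (or, equivalently, from the $t$-invariance of $\sigma(\mathbf{J}^\vers_s,\mathbf{J}^\vers_0)$ and $\sigma(\mathbf{J}^\vers_s,\mathbf{J}^\hors_0)$ evaluated at $t=0$ and $t=s$) one indeed gets $M^\vers_s(0) = N^\hors_0(s)^T$ and $N^\vers_s(0) = -N^\vers_0(s)^T$, and the block identity $N^\vers_0(s)\,N^\hors_0(s)^T = N^\hors_0(s)\,N^\vers_0(s)^T$ follows from $\Phi\Omega\Phi^T = \Omega$, yielding $M^\vers_s(0) = N^\vers_0(s)^{-1}N^\hors_0(s)N^\vers_0(s)^T = -N^\vers_0(s)^{-1}N^\hors_0(s)N^\vers_s(0)$ as required. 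However, you take a genuinely different and heavier route than the paper. The paper starts from the same ansatz $\mathbf{J}^\vers_s(t) = \mathbf{J}^\vers_0(t)C_\vers + \mathbf{J}^\hors_0(t)C_\hors$ but, rather than reading off both blocks at $t=0$ and then massaging $C_\vers = M^\vers_s(0)$ via symplecticity, it reads the \emph{horizontal} block at \emph{two} times. At $t=0$ the horizontal block gives $C_\hors = N^\vers_s(0)$, just as you found; at $t=s$ the horizontal block of $\mathbf{J}^\vers_s(s)$ is $\mathbbold{0}$ by construction, yielding $N^\vers_0(s)C_\vers + N^\hors_0(s)C_\hors = \mathbbold{0}$ and hence $C_\vers = -N^\vers_0(s)^{-1}N^\hors_0(s)N^\vers_s(0)$ directly. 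This only uses the uniqueness principle of Lemma~\ref{l:mixed} and the defining normalizations of the special Jacobi matrices, with no appeal to the symplectic inversion formula or to the block relations coming from $\Phi\Omega\Phi^T = \Omega$. Your argument makes the symplectic structure more explicit and essentially re-derives the duality $N^\vers_s(0) = -N^\vers_0(s)^T$ already implicit in Remark~\ref{r:readingconjugate}, but it introduces more machinery than the two-time trick needs.
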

\begin{proof}
To prove \eqref{eq:change1}, let $\mathbf{J}_s^\vers(t) = \mathbf{J}_0^\vers(t)C_\vers + \mathbf{J}_0^\hors(t)C_\hors$ for unique $n\times n$ matrices $C_\hors,C_\vers$. These can be computed by evaluating the horizontal component of both sides at times $t=s$ and $t=0$. To prove \eqref{eq:change2}, let $\mathbf{J}(t) = \mathbf{J}_0^\vers(t)D_\vers + \mathbf{J}_0^\hors(t)D_\hors$ for unique $n\times n$ matrices $D_\hors,D_\vers$. The latter can computed by evaluating both the horizontal and vertical components of $\mathbf{J}(t) = \left(\begin{smallmatrix} M(t) \\ N(t) \end{smallmatrix} \right)$ at time $t=0$.
\end{proof}
\begin{lemma}[$S$ matrix]\label{l:smatrix}
Consider the smooth family of $n\times n$ matrices 
\begin{equation}
S(t):=N^\vers_0(t)^{-1} N^\hors_0(t), \qquad \forall\, t \in (0,1).
\end{equation}
Such a matrix is symmetric and $\dot{S}(t) \leq 0$.
\end{lemma}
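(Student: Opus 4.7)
My plan is to derive both statements from the symplectic nature of the Hamiltonian flow in the chosen Darboux frame. Concretely, I would consider the concatenated $2n\times 2n$ fundamental matrix
\[
\Psi(t) := \begin{pmatrix} M^\vers_0(t) & M^\hors_0(t) \\ N^\vers_0(t) & N^\hors_0(t) \end{pmatrix}, \qquad \Psi(0) = \mathbbold{1}_{2n},
\]
which by \eqref{eq:mainequation} solves $\dot\Psi = \mathcal{H}(t)\Psi$ with $\mathcal{H}(t) = \left(\begin{smallmatrix} -A & -R \\ B & A^* \end{smallmatrix}\right)$. The symmetry of $B$ and $R$ forces $\mathcal{H}$ to be Hamiltonian (i.e.\ $\mathcal{H}^* J+J\mathcal{H}=0$ for the standard symplectic matrix $J$), so $\Psi(t)$ is symplectic for all $t$. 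Reading off the blocks of $\Psi^* J\Psi = J$ and $\Psi J\Psi^* = J$ yields the identities I will need:
\[
M^{\vers *}_0 N^\vers_0 = N^{\vers *}_0 M^\vers_0, \quad M^{\vers *}_0 N^\hors_0 - N^{\vers *}_0 M^\hors_0 = \mathbbold{1}, \quad N^\vers_0 N^{\hors *}_0 = N^\hors_0 N^{\vers *}_0.
\]

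For the symmetry claim, the third identity above, multiplied on the left by $(N^\vers_0)^{-1}$ and on the right by $(N^{\vers}_0)^{-*}$ (both invertible on $(0,1)$ by the non-conjugacy assumption), gives exactly $S(t) = S(t)^*$.

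For $\dot S(t)\leq 0$ I would compute the derivative directly. Writing $X=\mathbf{J}^\vers_0$, $Y=\mathbf{J}^\hors_0$ for brevity and differentiating $S = N_X^{-1} N_Y$ using the equations $\dot N_X = BM_X + A^*N_X$ and $\dot N_Y = BM_Y + A^*N_Y$, the $A^*$-terms telescope and one is left with
\[
\dot S = N_X^{-1}\, B\,\bigl(M_Y - M_X S\bigr).
\]
The key algebraic step is to simplify $M_Y - M_X S$. Using the Lagrangian identity $M_X^* N_X = N_X^* M_X$, one finds $(N_X^*)^{-1}M_X^* = M_X N_X^{-1}$, and combining this with the symplectic identity $M_X^* N_Y - N_X^* M_Y = \mathbbold{1}$ (solved for $M_Y$) produces the clean cancellation
\[
M_Y - M_X S = -(N_X^*)^{-1}.
\]
Substituting back yields $\dot S(t) = -(N^\vers_0)^{-1}(t)\, B(t)\, (N^{\vers}_0)^{-*}(t)$, which is manifestly $\leq 0$ since $B(t)\geq 0$ by Lemma~\ref{l:subspacesriccati}.

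The only potential obstacle I see is the bookkeeping in the last cancellation: the $M_Y - M_X S$ simplification hinges on using both the Lagrangian symmetry identity and the $\mathbbold{1}$-identity in the right order, and one must remember that these are available only because $\Psi(0)=\mathbbold{1}$ is already symplectic and the flow preserves this property. Everything else is routine matrix algebra once the symplectic picture is set up.
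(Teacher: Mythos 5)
Your proposal is correct. It is the same symplectic idea as in the paper, but packaged differently: the paper proves the lemma \emph{intrinsically} by identifying $S(t)$ as the coordinate expression of the Lagrangian curve $\Lambda(t)=e^{-t\vec H}_*\ver_{\lambda(t)}$ in $T_{\lambda(0)}(T^*M)$ relative to the split $\ver_{\lambda(0)}\oplus\hor_{\lambda(0)}$, deducing symmetry from isotropy and $\dot S=-2H(Z(t))\le 0$ from the non-negativity of the Hamiltonian on the fibers, with $Z(t)=E(t)\cdot(M^\vers_0 S - M^\hors_0)$ vertical; you instead work at the level of the fundamental solution $\Psi(t)$, derive the block identities from $\Psi^*J\Psi=J$ and $\Psi J\Psi^*=J$, and push through the Riccati-type differentiation to land on the closed formula $\dot S=-N_0^{\vers\,-1}\,B\,N_0^{\vers\,-*}$. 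The two proofs are in fact the same computation: your cancellation $M_Y-M_XS=-(N_X^*)^{-1}$ is precisely the statement that the paper's $Z(t)$ equals $E(t)(N_0^{\vers\,*})^{-1}$, and then $N_0^{\vers\,-1}BN_0^{\vers\,-*}=2H(Z(t))$ via $B=2H(E,E)$ from Lemma~\ref{l:subspacesriccati}. What the paper's phrasing buys is a coordinate-free picture (the monotonicity of $S$ is manifestly the positivity of the reduced Hamiltonian, with no need for invertibility of $M^\vers_0$); what yours buys is an explicit closed-form expression for $\dot S$, which in particular makes transparent exactly when $\dot S$ is strictly negative, namely in the directions where $B(t)$ is non-degenerate. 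Both routes use that $N^\vers_0(t)$ is invertible on $(0,1)$, which is guaranteed by the no-conjugate-point hypothesis in force, and your symplecticity check $\mathcal H^*J+J\mathcal H=0$ is the correct justification that $\Psi(t)$ stays in $\mathrm{Sp}(2n,\R)$.
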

\begin{proof}
In order to prove the lemma, we start by clarifying the geometric interpretation of $S(t)$. Indeed, observe that, letting
\begin{equation}
Z(t):= E(t)\cdot\left( M^\vers_0(t) S(t) - M^\hors_0(t)\right),
\end{equation}
we have
\begin{equation}\label{eq:tupla}
E(0) \cdot S(t) - F(0) = e^{-t\vec{H}}_* Z(t).
\end{equation}
In particular, $Z(t)$ represents a $n$-tuple of vertical vector fields along $\lambda(t)$, and the left hand side of \eqref{eq:tupla} generates the smooth curve of Lagrange subspaces $\Lambda(t):=e^{-t\vec{H}}_* \ver_{\lambda(t)} \subset T_{\lambda(0)}(T^*M)$. In particular $S(t)$ is symmetric, since\footnote{Here, for $n$-tuples $V$, $W$, the pairing $\sigma(V,W)$ denotes the matrix $\sigma(V_i,W_j)$. Moreover, if $A$ is an $n\times n$ matrix, the notation $W=V \cdot A$ denotes the $n$-tuple $W$ whose $i$-th element is $W_i = \sum_{j=1}^n A_{ji} V_j$.}
\begin{equation}
0 = \sigma\left(E(0) \cdot S(t) - F(0),E(0) \cdot S(t) - F(0)\right) = S(t) - S(t)^*,
\end{equation}
and $S(t)$ is non-increasing:
\begin{equation}
\dot{S}(t)  = \sigma_{\lambda(0)}\left(e^{-t\vec{H}}_* Z(t),\frac{d}{dt} e^{-t\vec{H}}_* Z(t)\right)  = - 2H(Z(t)) \leq 0, \qquad \forall t \in (0,s],
\end{equation}
where in the last equality we identified $Z(t) \in \ver_{\lambda(t)} \simeq T_{\gamma(t)}^*M$. The above inequality holds for any smooth family of vertical vector fields $Z(t)$ along $\lambda(t)$, and follows from a straightforward computation in local coordinates around $\lambda(t)$. 
\end{proof}
Using Lemma~\ref{l:changeofbasis}, one can check that (b) and (c) are equivalent to
\begin{itemize}
\item[$(b')$] $S(t) - S(s) \geq 0$, for all $t \in (0,s]$,
\item[$(c')$] $M(0) + S(s) \geq 0$, for all $t \in (0,1)$.
\end{itemize}
By Lemma~\ref{l:smatrix}, $\dot{S}(t) \leq 0$ for $t \in (0,1)$, thus proving assertion $(b')$. To prove $(c')$,  which a fortiori does not depend on $t$, recall that by the assumptions of Theorem~\ref{t:FR},
\begin{equation}\label{eq:assumptionss}
\frac{1}{2}d^2_{SR}(z,y) + \phi(z) \geq 0, \qquad \forall z \in M,
\end{equation}
with equality at $z=x$. Moreover
\begin{equation}\label{eq:CEMS}
\frac{1}{2s}d^2_{SR}(z,\gamma(s))\geq \frac{1}{2}d^2_{SR}(z,y) -\frac{1-s}{2}d^2_{SR}(x,y) \qquad \forall z \in M, \quad s \in (0,1].
\end{equation}
See \cite[Claim 2.4]{CEMS-interpolations} for a proof of \eqref{eq:CEMS} in the Riemannian setting. The proof carries over to the sub-Riemannian case, and is solely a consequence of the triangular and the arithmetic-geometric inequalities. In particular, a property similar to \eqref{eq:assumptionss} holds replacing $y$ with any midpoint $\gamma(s) \in Z_s(x,y)$, that is
\begin{equation}
\frac{1}{2s}d^2_{SR}(z,\gamma(s)) + \phi (z) \geq \mathrm{const}(s,x,y), \qquad \forall z \in M, \quad s \in (0,1),
\end{equation}
with equality when $z =x$. By Theorem~\ref{t:noconj}, $\gamma(s)$ is not conjugate to $x=\gamma(0)$ along the unique minimizing curve $\gamma$ joining $x$ with $y$, which is not abnormal. Hence $\gamma(s) \notin \cut(x)$, and $c_s(z):= d^2_{SR}(z,\gamma(s))/2s$ is smooth at $z=x$. Furthermore, $\phi$ is two times differentiable at $x$ by the assumptions of Theorem~\ref{t:FR}. Hence, $z\mapsto c_s(z) + \phi(z)$ has a critical point at $z=x$ and a well defined non-negative Hessian
\begin{equation}\label{eq:claimpos}
\Hess(c_s + \phi)|_x \geq 0, 
\end{equation}
as a quadratic form on $T_x M$. We claim that \eqref{eq:claimpos} is equivalent to $(c')$. To prove this claim, we use the next Lemma, which is essentially, a rewording of Lemma~\ref{l:affine}.
\begin{lemma}[Second differential and Hessian]\label{l:secondifferentialofsum}
Let $f,g : M \to \mathbb{R}$, twice differentiable at $x \in M$, and such that $x$ is a critical point for $f+g$. Then
\begin{equation}\label{eq:d2tohess}
d_x^2f - d_x^2(-g) = \Hess(f+g)|_x,
\end{equation}
where we used the fact that the space of second differentials at $\lambda = d_x f = d_x (-g)$ is an affine space on the space of quadratic forms on $T_x M$.
\end{lemma}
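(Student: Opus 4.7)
The plan is to apply Lemma~\ref{l:affine}, extended to functions merely twice differentiable at $x$ via Remark~\ref{r:secdifftwice}, to the pair $(f, -g)$. The hypothesis that $x$ is a critical point of $f+g$ reads $d_x f + d_x g = 0$, i.e.\ $d_x f = d_x(-g) =: \lambda$. Therefore both $d_x^2 f$ and $d_x^2(-g)$ belong to the same affine space $\mathcal{L}_\lambda \subset T_\lambda(T^*M)$ introduced in Lemma~\ref{l:affine}, and their difference is canonically a quadratic form on $T_x M$; it remains only to identify this form with $\Hess(f+g)|_x$.

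To do so, I would work in local coordinates $(q_i)$ around $x$, with induced canonical coordinates $(q_i,p_j)$ on $T^*M$. Following Remark~\ref{r:secdifftwice}, there exist symmetric matrices $A^f$ and $A^{-g}$ (the matrices of second derivatives of $f$ and $-g$ at $x$, respectively) such that
\begin{equation}
d_x^2 f(\partial_{q_i}) = \partial_{q_i}|_\lambda + \sum_{j=1}^n A^f_{ij}\, \partial_{p_j}|_\lambda, \qquad
d_x^2(-g)(\partial_{q_i}) = \partial_{q_i}|_\lambda + \sum_{j=1}^n A^{-g}_{ij}\, \partial_{p_j}|_\lambda.
\end{equation}
The difference has purely vertical components, represented by the symmetric matrix $A^f - A^{-g}$, which by the uniqueness of the second-order Taylor expansion equals the matrix of second derivatives of $f - (-g) = f+g$ at $x$. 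Since $d_x(f+g) = 0$, this matrix represents intrinsically the Hessian quadratic form $\Hess(f+g)|_x$ on $T_x M$, with coordinate-independence guaranteed by the closing remark of Remark~\ref{r:secdifftwice}.

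No substantive obstacle arises here: the identity is essentially a tautological unwinding of the definitions, matching the affine-space structure of $\mathcal{L}_\lambda$ with the classical Hessian of $f+g$ at its critical point $x$. The only delicate point is extending the affine description of Lemma~\ref{l:affine} from smooth functions to functions merely twice differentiable at $x$, which is precisely the content of Remark~\ref{r:secdifftwice} and therefore requires no new argument.
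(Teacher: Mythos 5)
Your proof is correct and takes essentially the same approach as the paper. The paper itself offers no detailed argument — it states the lemma is "essentially a rewording of Lemma~\ref{l:affine}" and then explains the identification in a remark — and your coordinate computation using Remark~\ref{r:secdifftwice} is precisely the natural unwinding of that observation, including the crucial identification of $d_xf=d_x(-g)$ from the critical-point hypothesis and the verification that the vertical difference $A^f - A^{-g}$ is the second-order Taylor coefficient of $f+g$ at $x$.
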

\begin{remark}
The difference of second differentials in the left hand side of \eqref{eq:d2tohess} is a linear map $T_x M \to T_\lambda(T^*M)$, with values in $\ver_{\lambda} = T_{\lambda}(T^*_xM) \simeq T_x^*M$, and it is identified with the quadratic form $\Hess(f+g)|_x : T_x M \times T_x M  \to \R$, i.e.\ the Hessian of $f+g$ at the critical point $x$.
 \end{remark}
 
We intend to apply Lemma~\ref{l:secondifferentialofsum} to $\phi + c_s$, which has a minimum point at $x$. Since $d_x(-c_s) = d_x \phi$, both $e^{t\vec{H}}_*\circ d^2_x \phi(X(0))$ and $e^{t\vec{H}}_* \circ d^2_x (-c_s)(X(0))$ are $n$-tuples of Jacobi fields along the same extremal $\lambda(t) = e^{t\vec{H}}(d_x\phi)$. We exploit the relation with Jacobi matrices to compute both second differentials of $\phi$ and $-c_s$ separately.

Since $c_s$ is smooth in a neighbourhood $\mathcal{O}_x$ of $x$, by \cite[Lemma 2.15]{riffordbook}, we have that $\exp_z(s d_z(-c_s)) = \gamma(s)$ for all $z \in \mathcal{O}_x$ and $s \in (0,1)$. Thus,
\begin{equation}\label{eq:verticality}
\pi \circ e^{s\vec{H}}(d_z(- c_s)) = \gamma(s), \quad \forall z \in \mathcal{O}_x \qquad \Rightarrow \qquad e^{s\vec{H}}_* \circ d^2_x (-c_s) (T_x M) = \ver_{\lambda(s)}.
\end{equation}
Equation~\eqref{eq:verticality} implies that the $n$-tuple of Jacobi fields $e^{t\vec{H}}_* \circ d^2_x (-c_s)(X(0))$ is associated with the Jacobi matrix $\mathbf{J}_s^\vers(t)L_s$, for some $n \times n$ matrix $L_s$. Evaluating at $t=0$, we obtain $L_s = N_s^\vers(0)^{-1}$. More precisely, for all $s \in (0,1)$ we have
\begin{equation}\label{eq:fact1}
e^{t\vec{H}}_* \circ d^2_x (-c_s)(X(0)) = E(t) \cdot M^\vers_s(t)N_s^\vers(0)^{-1} + F(t) \cdot N^\vers_s(t)N_s^\vers(0)^{-1}, \quad t \in [0,1].
\end{equation}
Furthermore, by definition of the Jacobi matrix $\mathbf{J}(t)=\left(\begin{smallmatrix} M(t) \\ N(t) \end{smallmatrix}\right)$, we have
\begin{equation}\label{eq:fact2}
e^{t\vec{H}}_* \circ d^2_x \phi (X(0)) = E(t) \cdot M(t) + F(t) \cdot N(t), \qquad t \in [0,1].
\end{equation}
By evaluating \eqref{eq:fact1} and \eqref{eq:fact2} at $t=0$, we obtain
\begin{align}
d^2_x(-c_s)(X(0)) & = E(0) \cdot M^\vers_s(0)N_s^\vers(0)^{-1} + F(0), \\
d^2_x\phi(X(0)) & = E(0) \cdot M(0) + F(0).
\end{align}
In particular, from \eqref{eq:claimpos} and Lemma~\ref{l:secondifferentialofsum} we finally prove $(c')$, since
\begin{align}
0  \leq \Hess(\phi + c_s)|_x & = M(0) - M^\vers_s(0)N_s^\vers(0)^{-1}\\
 & = M(0) + \underbrace{M^\vers_0(0)}_{=\mathbbold{1}} \underbrace{N^\vers_0(s)^{-1} N^\hors_0(s)}_{= S(s)} - \underbrace{M^\hors_0(0)}_{= \mathbbold{0}},
 \end{align}
where, in the second line, we used Lemma~\ref{l:changeofbasis} to eliminate $M_s^\vers(0)$.
\end{proof}
\section{Density formula: proof of Lemma~\ref{l:ambrosio+}}\label{a:ambrosio+}

In the proof of Theorem~\ref{t:abscont} we used a slightly more general reformulation of \cite[Lemma 5.5.3]{AGS-Gradientflows} for non-injective maps. The proof is essentially the same as in the aforementioned reference, and we report it here for completeness.

\begin{proof}[Proof of Lemma~\ref{l:ambrosio+}]
We start by proving that if $\det( \tilde{d}_x f) >0$ for $\mathcal{L}^d$-a.e.\ $x \in \Sigma$, then $f_\sharp(\rho \mathcal{L}^d) \ll \mathcal{L}^d$. For any Borel function $h : \R^d \to [0,+\infty]$, we have the area formula for approximately differentiable maps \cite[Eq.\ 5.5.2]{AGS-Gradientflows}, that is
\begin{equation}\label{eq:areaformula}
\int_{\Sigma_f} h(x)|\det(\tilde{d}_x f)| dx = \int_{\R^d} \sum_{x \in \tilde{f}^{-1}(y) \cap \Sigma_f} h(x) dy.
\end{equation}
Since $f$ is measurable, then $f(x) = \tilde{f}(x)$ up to a $\mathcal{L}^d$-negligible set (see \cite[Remark 5.5.2]{AGS-Gradientflows}), and hence $f_\sharp(\rho \mathcal{L})^d = \tilde{f}_\sharp(\mathcal{L}^d)$. Since $|\det( \tilde{d}_x f)| >0$ for $\mathcal{L}^d$-a.e. $x\in\Sigma\subseteq \Sigma_f$, the function $h: \R^d \to [0,+\infty]$ given by
\begin{equation}
h(x):= \begin{cases} 
\frac{\rho(x) \chi_{\tilde{f}^{-1}(B) \cap \Sigma}(x)}{|\det(\tilde{d}_x f)|} & x \in \Sigma, \\
0 & \text{otherwise},
\end{cases}
\end{equation}
is Borel and well defined. Hence, for any Borel set $B \subset \R^d$, we obtain
\begin{align}
f_\sharp(\rho \mathcal{L}^d)(B) & = (\rho \mathcal{L}^d)(\tilde{f}^{-1}(B) \cap \Sigma) = \int_{\tilde{f}^{-1}(B) \cap \Sigma} \rho(x) \,dx \\
& = \int_{\Sigma_f} \rho(x) \chi_{\tilde{f}^{-1}(B) \cap \Sigma}(x)\, dx \\
& = \int_{\R^d} \sum_{x \in  \tilde{f}^{-1}(y) \cap \Sigma_f}  \frac{\rho(x) \chi_{\tilde{f}^{-1}(B) \cap \Sigma}(x)}{|\det(\tilde{d}_x f)| }\,dy \\
& = \int_{B} \sum_{x \in \tilde{f}^{-1}(y) \cap \Sigma}  \frac{\rho(x)}{|\det(\tilde{d}_x f)| }\,dy,
\end{align}
where in the last line we used \eqref{eq:areaformula}. In particular if $\mathcal{L}^d(B) =0$, then also $f_\sharp(\rho \mathcal{L}^d)(B)$.

The inverse implication is proved by contradiction. Assume that there exists a Borel set $B \subset \Sigma$ with $\mathcal{L}^d(B) >0$ and $ \det(\tilde{d}_x f) = 0$ on $B$. Then, the area formula \eqref{eq:areaformula} with $h = \chi_{B \cap \Sigma}$ yields
\begin{equation}
0 = \int_{\R^d} \sum_{x \in \Sigma_f \cap \tilde{f}^{-1}(y)} \chi_{B\cap \Sigma}(x) \, dy \geq \mathcal{L}^d(\tilde{f}(B)).
\end{equation}
On the other hand, since $f_\sharp (\rho \mathcal{L}^d) = \tilde{f}_\sharp(\rho \mathcal{L}^d)$, we have
\begin{equation}
f_\sharp (\rho \mathcal{L}^d)(\tilde{f}(B)) = \int_{\tilde{f}^{-1}(\tilde{f}(B))} \rho(x) \, dx \geq \int_{B} \rho(x) \, dx = \mathcal{L}^d(B)>0.
\end{equation}
Thus $f_\sharp(\rho \mathcal{L}^d)$ cannot be absolutely continuous w.r.t. $\mathcal{L}^d$.
\end{proof}

\section*{Acknowledgments}
This work was supported by the Grant ANR-15-CE40-0018 of the ANR, and by a public grant as part of the Investissement d'avenir project, reference ANR-11-LABX-0056-LMH, LabEx LMH, in a joint call with the ``FMJH Program Gaspard Monge in optimization and operation research''. This work has been supported by the ANR project ANR-15-IDEX-02.

\medskip

We wish to thank Andrei Agrachev, Luigi Ambrosio, Zoltan Balogh, Ludovic Rifford and Kinga Sipos for stimulating discussions. We also thank the anonymous referees for their comments.

\bibliographystyle{alphaabbrv}
\bibliography{SR-interpolation}

\end{document}